\newtheorem*{theoA}{Theorem A}
\newtheorem*{theoB}{Theorem B}
\newtheorem*{theoC}{Theorem C}
\newtheorem*{theoD}{Theorem D}
\newtheorem*{theoE}{Theorem E}
\newtheorem*{theoF}{Theorem F}
\newtheorem*{theoG}{Theorem G}
\newtheorem{theo}{Theorem}[section]
\newtheorem{lem}{Lemma}[section]
\newtheorem{cor}{Corollary}[section]
\newtheorem{rem}{Remark}[section]
\newtheorem{que}{Question}[section]
\newtheorem{defi}{Definition}[section]
\newtheorem{open problem}{Open problem}[section]
\newcommand{\F}{{\mathcal{F}}}
\newcommand{\G}{{\mathcal{G}}}
\newcommand{\D}{{\Bbb{D}}}
\newcommand{\C}{{\Bbb{C}}}
\newcommand{\N}{{\Bbb{N}}}
\newcommand{\R}{{\Bbb{R}}}
\newcommand{\pa}{\partial}
\newcommand{\ol}{\overline}
\newcommand{\be}{\begin{equation}}
\newcommand{\ee}{\end{equation}}
\newcommand{\bs}{\begin{small}}
\newcommand{\es}{\end{small}}
\newcommand{\beas}{\begin{eqnarray*}}
\newcommand{\eeas}{\end{eqnarray*}}
\newcommand{\bea}{\begin{eqnarray}}
\newcommand{\eea}{\end{eqnarray}}
\renewcommand{\epsilon}{\varepsilon}
\numberwithin{equation}{section}
\begin{document}
\title[generalized Bohr inequalities] {Generalized Bohr inequalities for K-quasiconformal harmonic mappings and their applications}
\author[R. Biswas and R. Mandal]{Raju Biswas and Rajib Mandal}
\date{}
\address{Raju Biswas, Department of Mathematics, Raiganj University, Raiganj, West Bengal-733134, India.}
\email{rajubiswasjanu02@gmail.com}
\address{Rajib Mandal, Department of Mathematics, Raiganj University, Raiganj, West Bengal-733134, India.}
\email{rajibmathresearch@gmail.com}

\maketitle
\let\thefootnote\relax
\footnotetext{2020 Mathematics Subject Classification: 30A10, 30B10, 30C62, 30C75, 40A30.}
\footnotetext{Key words and phrases: Harmonic mappings, locally univalent functions,  Bohr inequality, $K$-quasiconformal mappings, Gaussian hypergeometric function.}
\footnotetext{Type set by \AmS -\LaTeX}
\begin{abstract} The classical Bohr theorem and its subsequent generalizations have become active areas of research, with investigations conducted in numerous function spaces. 
Let $\{\psi_n(r)\}_{n=0}^\infty$ be a sequence of non-negative continuous functions defined on $[0,1)$ such that the series $\sum_{n=0}^\infty \psi_n(r)$ 
converges locally uniformly on the interval $[0, 1)$. The main objective of this paper is to establish several sharp versions of generalized Bohr inequalities for the class of $K$-quasiconformal sense-preserving harmonic mappings on the unit disk $\D := \{z \in \mathbb{C} : |z| < 1\}$. To achieve these, we employ the sequence of functions $\{\psi_n(r)\}_{n=0}^\infty$ in the majorant series 
rather than the conventional dependence on the basis sequence $\{r^n\}_{n=0}^\infty$.
As applications, we derive a number of previously published results as well as a number of sharply improved and refined Bohr inequalities for harmonic mappings in $\D$. Moreover, 
we obtain a convolution counterpart of the Bohr theorem for harmonic mapping within the context of the Gaussian hypergeometric function.
\end{abstract}
\section{Introduction and Preliminaries}
In a noteworthy discovery, H. Bohr \cite{B1914} demonstrated that if $f$ is a bounded analytic function on the open unit disk $\D$ with the supremum norm 
$\Vert f\Vert_\infty:=\sup_{z\in\mathbb{D}} |f(z)|$ and the Taylor series expansion $f(z)=\sum_{n=0}^{\infty} a_nz^n$, then 
\bea\label{e2} \sum_{n=0}^{\infty}|a_n|r^n\leq \Vert f\Vert_\infty\quad\text{for}\quad|z|=r\leq\frac{1}{3}.\eea
It is observed that, if $|f(z)|\leq 1$ in $\mathbb{D}$ and $|f(z_1)|=1$ for some point $z_1\in\mathbb{D}$, then $f(z)$ reduces to a unimodular constant function by the maximum 
modulus principle. In this context, the quantity $1/3$ is known as the Bohr radius and is regarded as an optimal value that cannot be improved further. Meanwhile, the inequality 
(\ref{e2}) is known as the Bohr inequality. 
The inequality (\ref{e2}) was originally proved by Bohr \cite{B1914} for $r\leq 1/6$. Subsequently, the improved value $1/3$ was obtained independently by Weiner, Riesz, and Schur (see \cite{D1995}).
The function $f(z)=(a-z)/(1-az)$ demonstrates that $1/3$ is optimal for $a\to1^-$. 
For further insight, see the survey articles by Abu-Muhanna {\it et al.} \cite{AAP2017}, Garcia {\it et al.} (see \cite[Chapter 8]{GMR2018}), and the highly informative monograph by Defant {\it et al.} \cite{DGMS2019} on the Bohr phenomenon in the context of modern analysis of more general settings.\\[2mm]
\indent 
The concept of the Bohr phenomenon can be generalized to the class $\G$, which consists of analytic functions of the form $f(z)=\sum_{n=0}^{\infty} a_nz^n$.
These functions map from the unit disk $\D$ into a given domain $U\subseteq\C$ such that $f(\mathbb{D})\subseteq U$.
The class $\G$ is said to satisfy the Bohr phenomenon if there exists a largest number $ r_{U}\in(0, 1)$ such that the inequality (\ref{e2}) holds for $|z|= r \leq r_{U}$.
In this context, the quantity $r_U$ is referred to as the Bohr radius for the class $\G$. 
Boas and Khavinson \cite{BK1997} contributed to the revival of interest in this topic by further developing the concept of the Bohr radius in the context of several complex variables.
Furthermore, the authors have identified the multidimensional Bohr radius as a significant contribution to this field of research. 
Moreover, Dixon \cite{D1995} employed the classical Bohr theorem to develop a Banach algebra that is not categorized as an operator algebra but nevertheless satisfies the 
non-unital von Neumann inequality. Subsequently, several researchers have followed up and investigated this phenomenon in a more general abstract setting across various contexts 
(see \cite{1A2000,2A2001, BDK2004,LP2021,PS2004,1AH2022}). 
In particular, following the publication of the articles \cite{AAP2017} and \cite{2KP2018}, researchers have explored a number of approaches and novel problems pertaining to Bohr's inequality in the plane (see\cite{AAH2022,AKP2019,1AH2021,2AH2021,AA2023,BB2004,BB2004,DFOOS2011,1KP2018,LP2023,MBG2024}).
Another concept that has been the subject of considerable recent discussion is the Hankel determinant of the logarithmic coefficients of univalent functions  (see \cite{AAS2023,B2024,MRA2024}).
\subsection{Bohr-Rogosinski inequality and known important theorems} 
In addition to the notion of the Bohr radius, there is another concept known as the Rogosinski radius \cite{R1923} for bounded analytic functions in $\mathbb{D}$, which is defined as follows: Let $f(z)=\sum_{n=0}^{\infty}a_nz^n$ be analytic in $\mathbb{D}$ such that $|f(z)|<1$ in $\Bbb{D}$ and $S_N(z):=\sum_{n=0}^{N-1}a_nz^n$ denotes the partial 
sum of $f$. Then, $\left|S_N(z)\right|<1$ in the disk $|z|<1/2$ for $N\geq 1$. The number $1/2$ is optimal. 
Motivated by the Rogosinski radius, Kayumov and Ponnusamy \cite{1KP2017} have considered the Bohr-Rogosinski sum $R_N^f(z)$, which is defined by
\beas R_N^f(z):=|f(z)|+\sum_{n=N}^{\infty}|a_n||z|^n.\eeas
Clearly, $|S_N(z)|=\left|f(z)-\sum_{n=N}^{\infty}a_nz^n\right|\leq R_N^f(z)$. 
Moreover, the classical Bohr sum (majorant series) associates the Bohr-Rogosinski sum with setting the value of $N$ to 1 and substituting $|f(z)|$ with $|f(0)|$.
In their article, Kayumov and Ponnusamy \cite{1KP2017} have defined the Bohr-Rogosinski radius as the largest number $r_0\in(0,1)$ such that the inequality $R_N^f(z)\le1$ holds for the disk $|z|\leq r_0$.\\[2mm] 
\indent
In 2017, Kayumov and Ponnusamy \cite{1KP2017} obtained the following results about the Bohr-Rogosinski radius for bounded analytic functions in $\D$.
\begin{theoA}\cite{1KP2017}
Let $f(z)=\sum_{n=0}^{\infty} a_nz^n$ be analytic in $\mathbb{D}$ and $|f(z)|\leq 1$. Then
\beas |f(z)|+\sum_{n=N}^{\infty}|a_n||z|^n\leq 1\eeas
for $|z|= r\leq R_N$, where $R_N\in(0,1)$ is the positive root of the equation $\psi_N( r)=0$, $\psi_N( r)=2(1+ r) r^N-(1- r)^2$. The radius $R_N$ is the best possible. Moreover, 
\beas|f(z)|^2+\sum_{n=N}^{\infty}|a_n||z|^n\leq 1\eeas
for $|z|= r\leq R_N'$, where $R_N'\in (0,1)$ is the positive root of the equation $(1+ r) r^N-(1- r)^2=0$. The radius $R_N'$ is the best possible.
\end{theoA}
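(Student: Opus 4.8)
To establish Theorem A, the plan is to combine two sharp classical facts about a bounded analytic function $f(z)=\sum_{n=0}^{\infty}a_nz^n$ with $|f(z)|\le1$ on $\D$: the Schwarz--Pick estimate
\[
|f(z)|\le\frac{|a_0|+r}{1+|a_0|r}\qquad(|z|=r),
\]
and the classical coefficient bound $|a_n|\le 1-|a_0|^2$ for every $n\ge1$. Writing $a=|a_0|\in[0,1]$ and summing a geometric series, the second gives $\sum_{n=N}^{\infty}|a_n|r^n\le(1-a^2)r^N/(1-r)$, so it suffices to prove the purely algebraic inequality
\[
\frac{a+r}{1+ar}+(1-a^2)\frac{r^N}{1-r}\le1\qquad\text{for }a\in[0,1],\ r\le R_N.
\]

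First I would rewrite the left-hand side minus $1$: since $\frac{a+r}{1+ar}-1=-\frac{(1-a)(1-r)}{1+ar}$, the whole expression equals $(1-a)\big[(1+a)\frac{r^N}{1-r}-\frac{1-r}{1+ar}\big]$, and because $1-a\ge0$ the claim reduces to the bracket being non-positive, i.e. $(1+a)(1+ar)r^N\le(1-r)^2$. The key monotonicity observation is that the left side is increasing in $a\in[0,1]$, hence bounded above by its value at $a=1$, namely $2(1+r)r^N$; thus the inequality holds whenever $\psi_N(r)=2(1+r)r^N-(1-r)^2\le0$, that is, for $r\le R_N$. One checks separately that $\psi_N$ is strictly increasing on $[0,1]$ with $\psi_N(0)=-1<0<\psi_N(1)$, so $R_N$ is well defined and unique. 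The second statement is handled the same way, starting from $|f(z)|^2\le\big(\frac{a+r}{1+ar}\big)^2$ and using the identity $1-\big(\frac{a+r}{1+ar}\big)^2=\frac{(1-a^2)(1-r^2)}{(1+ar)^2}$; after cancelling the common factor $1-a^2$ the problem becomes $r^N(1+ar)^2\le(1-r)^2(1+r)$, whose left side is again largest at $a=1$, giving the defining equation $(1+r)r^N=(1-r)^2$ of $R_N'$.

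For sharpness I would use the single test family $f_a(z)=(a-z)/(1-az)$, $a\in(0,1)$, whose Taylor coefficients satisfy $a_0=a$ and $|a_n|=(1-a^2)a^{n-1}$, evaluated at the real point $z=-r$. Then $R_N^{f_a}(-r)-1=(1-a)\big[\frac{(1+a)a^{N-1}r^N}{1-ar}-\frac{1-r}{1+ar}\big]$, and letting $a\to1^-$ the bracket tends to $\psi_N(r)/\big((1-r)(1+r)\big)$; the corresponding computation in the squared case produces the factor $\big((1+r)r^N-(1-r)^2\big)/\big((1-r)(1+r)\big)$. Consequently, for any $r>R_N$ (resp. $r>R_N'$) the relevant sum exceeds $1$ once $a$ is close enough to $1$, which proves optimality. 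I expect the only genuinely delicate points to be the verification that each of the two equations has exactly one root in $(0,1)$ (so that $R_N$ and $R_N'$ are unambiguous) and the care needed in the limit $a\to1^-$, where the prefactor $1-a$ degenerates and one must track the order of vanishing to conclude that the bracket, not the prefactor, controls the sign.
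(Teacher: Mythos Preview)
The paper does not itself prove Theorem~A; it is quoted from \cite{1KP2017} as background, so there is no ``paper's own proof'' to compare against. That said, your argument is correct and in fact uses precisely the two tools the present paper isolates as Lemmas~\ref{lem1} and~\ref{lem2} (Schwarz--Pick and the coefficient bound $|a_n|\le1-|a_0|^2$), together with the extremal family $f_a(z)=(a-z)/(1-az)$ that the paper uses for sharpness in all of its own theorems; this is also the approach of the original source. Your reduction to the algebraic inequality $(1+a)(1+ar)r^N\le(1-r)^2$ (respectively $r^N(1+ar)^2\le(1-r)^2(1+r)$), followed by maximizing over $a\in[0,1]$, is exactly the standard route, and your sharpness computation via $a\to1^-$ is the expected one.
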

In light of the work of Kayumov and Ponnusamy \cite{1KP2017}, Liu {\it et al.} \cite{LSX2018} have investigated several results on Bohr-type inequality. Here is one of them.
\begin{theoB}\cite{LSX2018}
Let $f(z)=\sum_{n=0}^{\infty} a_nz^n$ be analytic in $\mathbb{D}$ and $|f(z)|\leq 1$. Then
\beas |f(z)|+|f'(z)||z|+\sum_{n=2}^{\infty}|a_n||z|^n\leq 1\quad\text{for}\quad |z|=r\leq (\sqrt{17}-3)/4.\eeas
The radius $(\sqrt{17}-3)/4$ is the best possible.
\end{theoB}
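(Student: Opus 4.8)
The plan is to blend the Schwarz--Pick lemma with the classical coefficient estimate for bounded analytic functions. After a rotation $f(z)\mapsto e^{-i\theta}f(e^{i\varphi}z)$ we may assume $a_0=a\in[0,1]$, and we fix $z\in\D$ with $|z|=r\le(\sqrt{17}-3)/4$. Three facts are used: the coefficient bound $|a_n|\le 1-a^2$ for $n\ge1$ (for $\|f\|_\infty\le1$ this follows by applying the Schwarz--Pick inequality at the origin to the bounded function whose Taylor expansion is $a_0+a_nw+a_{2n}w^2+\cdots$); the distortion estimate $|f(z)|\le\mu:=\frac{a+r}{1+ar}$; and the derivative form $|f'(z)|\le\frac{1-|f(z)|^2}{1-r^2}$. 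The first of these gives at once $\sum_{n\ge2}|a_n|r^n\le(1-a^2)\frac{r^2}{1-r}$.

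The crucial step is to estimate $|f(z)|+r|f'(z)|$ jointly rather than termwise. Writing $m:=|f(z)|\in[0,\mu]$, the last two facts give $|f(z)|+r|f'(z)|\le h(m)$, where $h(m):=m+\frac{r(1-m^2)}{1-r^2}$. Since $h'(m)=1-\frac{2rm}{1-r^2}\ge0$ for $m\le\frac{1-r^2}{2r}$ and $\frac{1-r^2}{2r}\ge1\ge\mu$ whenever $r\le\sqrt2-1$ (hence throughout our range), $h$ is nondecreasing on $[0,\mu]$, so $|f(z)|+r|f'(z)|\le h(\mu)$. Using $1-\mu^2=\frac{(1-a^2)(1-r^2)}{(1+ar)^2}$ one finds $h(\mu)=\frac{a+r}{1+ar}+\frac{r(1-a^2)}{(1+ar)^2}$, whence
\[ |f(z)|+r|f'(z)|+\sum_{n\ge2}|a_n|r^n\ \le\ U(a,r):=\frac{a+r}{1+ar}+\frac{r(1-a^2)}{(1+ar)^2}+\frac{(1-a^2)r^2}{1-r}. \]

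It remains to prove $U(a,r)\le1$ for all $a\in[0,1]$ when $r\le(\sqrt{17}-3)/4$. The case $a=1$ is trivial (then $f$ is constant), so let $a<1$; using $1-\frac{a+r}{1+ar}=\frac{(1-a)(1-r)}{1+ar}$ and dividing by $1-a$, the inequality $U(a,r)\le1$ becomes $L(a)\le1-r$ with $L(a):=(1+a)\big(\frac{r}{1+ar}+\frac{r^2(1+ar)}{1-r}\big)$. A direct differentiation shows each summand of $L$ is increasing in $a$ on $[0,1]$, so it suffices to let $a\to1$: after clearing denominators, $L(1)\le1-r$ is equivalent to $1-3r-r^2-3r^3-2r^4\ge0$, and since $1-3r-r^2-3r^3-2r^4=-(2r^2+3r-1)(r^2+1)$, this holds exactly when $2r^2+3r-1\le0$, i.e. $r\le(\sqrt{17}-3)/4$.

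For sharpness I would test $f_a(z)=\frac{a-z}{1-az}$, $a\in(0,1)$, at the point $z=-r$, where $|f_a(-r)|=\frac{a+r}{1+ar}$, $|f_a'(-r)|=\frac{1-a^2}{(1+ar)^2}$ and $|a_n|=a^{n-1}(1-a^2)$ for $n\ge1$, so that the left-hand side equals
\[ V(a,r)=\frac{a+r}{1+ar}+\frac{r(1-a^2)}{(1+ar)^2}+\frac{(1-a^2)ar^2}{1-ar}. \]
A short computation yields $\frac{V(a,r)-1}{1-a}\longrightarrow\frac{(2r^2+3r-1)(r^2+1)}{(1+r)^2(1-r)}$ as $a\to1^-$, which is positive once $r>(\sqrt{17}-3)/4$; hence for such $r$ the inequality fails at $z=-r$ for $a$ sufficiently close to $1$, so the radius is best possible. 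I expect the main obstacle to lie in the joint estimate of $|f(z)|+r|f'(z)|$: one must both establish the monotonicity of $h$ on the relevant interval and recognize that the majorant $U(a,r)$ is governed by its value at $a=1$ — it is exactly this that makes the upper bound match the first-order behaviour of the extremal family and hence not be lossy.
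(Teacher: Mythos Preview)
Your argument is correct and follows essentially the same route as the paper's own treatment (specialized from its Theorem~\ref{T3} with $\psi_n(r)=r^n$ and $K=1$): both use the Schwarz--Pick bound $|f(z)|\le(a+r)/(1+ar)$, the derivative estimate $|f'(z)|\le(1-|f(z)|^2)/(1-r^2)$, and $|a_n|\le1-a^2$, then exploit the monotonicity of $m\mapsto m+\beta(1-m^2)$ on $[0,1]$ for $\beta\le1/2$ (equivalently $r\le\sqrt2-1$) to pass to the majorant, and finally show the majorant is increasing in $a$ so that the critical inequality reduces to the $a\to1$ case; the paper's $F_4(a,r)$ is exactly your $L(a)-(1-r)$, and both arrive at the polynomial $2r^4+3r^3+r^2+3r-1=(2r^2+3r-1)(r^2+1)$. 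The sharpness argument via $f_a(z)=(a-z)/(1-az)$ at $z=-r$ with $a\to1^-$ is also identical.
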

In addition, a number of authors have investigated other extensions of this kind (see \cite{AKP2020,LLP2021,KKP2021}). Before proceeding with the discussion, and in order to contextualize the recent results, it is essential to introduce the requisite notations. Let $h$ be an analytic function
in $\D$ and $\D_r := \{z\in\C : |z| < r\}$ for $0<r< 1$. Let $S_ r(h)$ denotes the planar integral  
\beas S_ r(h)=\int_{\mathbb{D}_ r} |h'(z)|^2 dA(z).\eeas
If $h(z)=\sum_{n=0}^\infty a_n z^n$, then it is well known that $S_ r(h)/\pi=\sum_{n=1}^\infty n|a_n|^2  r^{2n}$ and if $h$ is univalent, then $S_ r(h)$ is the area of the image $h(\mathbb{D}_ r)$. \\[2mm]
In 2018, Kayumov and Ponnusamy \cite{1KP2018} obtained the following improved versions of Bohr's inequality for the bounded analytic functions in $\mathbb{D}$.
\begin{theoC}\cite{1KP2018}
Let $f(z)=\sum_{n=0}^{\infty} a_nz^n$ be analytic in $\mathbb{D}$, $|f(z)|\leq 1$ and $S_r(f)$ denotes the area of the image of the subdisk $|z|<r$ under mapping $f$. Then
\beas \sum_{n=0}^{\infty} |a_n|r^n+\frac{16}{9}\left(\frac{S_r(f)}{\pi}\right)\leq 1\quad\text{for}\quad r\leq\frac{1}{3}.\eeas
The numbers $1/3$, $16/9$ cannot be improved. Moreover, 
\beas |a_0|^2+\sum_{n=1}^{\infty} |a_n| r^n+\frac{9}{8}\left(\frac{S_r(f)}{\pi}\right)\leq 1\quad\text{for}\quad r\leq\frac{1}{2},\eeas
and the numbers $1/2$, $9/8$ cannot be improved. \end{theoC}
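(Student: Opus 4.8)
Throughout write $a:=|a_0|\in[0,1)$ and $S:=1-a^2$. The tools are: (a) the Schwarz--Pick coefficient bounds $|a_n|\le S$ for all $n\ge1$ and $\sum_{n\ge1}|a_n|^2\le S$; (b) the representation $f=(a_0+z\omega)/(1+\bar a_0z\omega)$ with $\|\omega\|_\infty\le1$, equivalently $\sum_{n\ge1}a_nz^n=S\,z\omega(z)/(1+\bar a_0z\omega(z))$; and (c) the identity $S_r(f)/\pi=\sum_{n\ge1}n|a_n|^2r^{2n}$ recorded above. Both inequalities are quantitative sharpenings of classical Bohr-type facts --- the first of $\sum_{n\ge0}|a_n|r^n\le1$ on $r\le1/3$, the second of $|a_0|^2+\sum_{n\ge1}|a_n|r^n\le1$ on $r\le1/2$ --- and in each case the extremal configuration is the M\"obius map $f_a(z)=(a-z)/(1-az)$ in the limit $a\to1^-$.

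For the first inequality the target, after subtracting $|a_0|=a$, is
$$\sum_{n=1}^\infty|a_n|r^n+\frac{16}{9}\sum_{n=1}^\infty n|a_n|^2r^{2n}\le 1-a\qquad(r\le1/3).$$
The point to appreciate is that there is no slack to waste: for $f_a$ the first sum already equals $1-a$ up to an error of order $(1-a)^2$, while the second sum is itself of order $(1-a)^2$, so the two together just reach $1-a$. In particular, replacing each $|a_n|$ independently by $S$ in the two series is far too lossy (it delivers the estimate only for $a$ bounded away from $1$). My plan is to control the left side through (b): writing $a_n=S\,c_{n-1}$, where $(c_k)_{k\ge0}$ are the Taylor coefficients of $\omega/(1+\bar a_0z\omega)$, one has $|c(z)|\le(1-a\rho)^{-1}$ on $|z|=\rho$, and --- crucially --- the sequence $(c_k)$ is not arbitrary but is constrained by $\omega$ being a Schur function. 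I would use this to show that, for fixed $a$, the quantity $\sum|a_n|r^n+\frac{16}{9}\sum n|a_n|^2r^{2n}$ is dominated by its value at $\omega\equiv\mathrm{const}$, i.e.\ at $f_a$. That reduces the matter to the scalar inequality obtained by plugging $f_a$ into the target at $r=1/3$ (where $\sum_{n\ge1}|a_n|r^n=S/(3-a)$ and $S_r(f_a)/\pi=9S^2/(9-a^2)^2$):
$$\frac{1+a}{3-a}+\frac{16(1-a)(1+a)^2}{(9-a^2)^2}\le1\qquad(0\le a\le1),$$
which I would establish by clearing denominators, noting that the resulting polynomial vanishes at $a=1$, factoring out $1-a$, and checking nonnegativity of the cofactor on $[0,1]$.

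The main obstacle is precisely this reduction step: the naive coefficient bounds permit ``phantom'' coefficient sequences (e.g.\ many $|a_n|$ near $S$ simultaneously) that would violate the target, so one must genuinely exploit the structure of Schur functions --- via the representation in (b), or by inducting on the Schur parameters --- to rule them out, and this is where the sharp constant $16/9$ is forced. Granting the reduction, the second inequality is entirely parallel: its target becomes $\sum_{n\ge1}|a_n|r^n+\tfrac98\sum_{n\ge1}n|a_n|^2r^{2n}\le1-a^2$ on $r\le1/2$, the same argument sends it to the scalar inequality coming from $f_a$ at $r=1/2$, namely
$$\frac{1}{2-a}+\frac{9(1-a^2)}{2(4-a^2)^2}\le1\qquad(0\le a\le1),$$
again checked by factoring. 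Sharpness is immediate from the family $f_a$: for $r>1/3$ (resp.\ $r>1/2$) already the majorant part $\sum_{n\ge0}|a_n|r^n$ of $f_a$ exceeds $1$ as $a\to1^-$, so no larger radius works; and a first-order expansion in $1-a$ of the two scalar inequalities shows that replacing $16/9$ (resp.\ $9/8$) by any larger constant makes the linear term positive, so these constants are optimal --- they are exactly the values that annihilate that linear term.
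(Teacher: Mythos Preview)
The paper under review does not prove Theorem~C; it is quoted verbatim from \cite{1KP2018} as background and motivation, with no argument supplied. So there is no ``paper's own proof'' against which to compare your attempt.

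Assessed on its own merits, your proposal has a genuine gap, and you have identified it yourself. You correctly observe that the crude Schwarz--Pick bound $|a_n|\le 1-|a_0|^2$, applied termwise to both the majorant series and the area term, is too lossy: at $r=1/3$ it leaves a slack of only $(1-a)^2/2$ in the Bohr part, while the area bound it yields is $(1-a^2)^2\cdot 9/64$, and $\tfrac{16}{9}\cdot\tfrac{9}{64}(1-a^2)^2\le\tfrac12(1-a)^2$ fails for $a>\sqrt2-1$. Your proposed remedy is to reduce to the M\"obius extremal $f_a$ by arguing that, over all Schur functions $\omega$ in the representation $f=(a_0+z\omega)/(1+\bar a_0 z\omega)$, the target functional is maximized at $\omega\equiv\mathrm{const}$. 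But you do not carry this out: the proposal reads ``I would use this to show~\dots'' and ``Granting the reduction~\dots'', and you explicitly flag the step as ``the main obstacle''. That reduction \emph{is} the theorem. Nothing in your listed tools (a)--(c) implies it, and a vague appeal to ``inducting on the Schur parameters'' is not an argument. The downstream scalar inequalities you write for $f_a$ are correct, and your sharpness analysis (the constants $16/9$ and $9/8$ are precisely those that kill the first-order term in $1-a$) is fine --- but that only shows $f_a$ satisfies the inequality with equality in the limit, not that an arbitrary $f$ does.

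If you want to complete the argument, you need an honest replacement for the missing step: either a direct variational argument over $\omega$ (which is delicate, since the functional mixes $|a_n|$ and $|a_n|^2$ with different weights), or --- closer to what is actually done in \cite{1KP2018} --- a sharp \emph{a priori} bound on $S_r(f)/\pi$ in terms of $a$ and $r$ (tighter than the crude $(1-a^2)^2 r^2/(1-r^2)^2$) that, combined with a matching sharp bound on the majorant series, reduces the problem to a one-variable inequality in $a$. As written, the proposal is a correct diagnosis of the difficulty together with a verification of the extremal case, but not a proof.
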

\noindent 
Subsequently, Ismagilov {\it et al.}\cite{IKP2020,IKKP2021} proceeded with the investigation and yielded a number of sharply improved versions of Bohr's inequality in relation to both $S_r(f)/\pi$ and $S_r(f)/(\pi-S_r(f))$.\\[2mm]
In 2020, Ponnusamy {\it et al.} \cite{PVW2020} established the following refined Bohr inequality.
\begin{theoD}\cite{PVW2020}  Let $f(z)=\sum_{n=0}^{\infty} a_nz^n$ be analytic in $\mathbb{D}$ and $|f(z)|\leq 1$. Then,
\beas \sum_{n=0}^\infty |a_n| r^n+\left(\frac{1}{1+|a_0|}+\frac{ r}{1- r}\right)\sum_{n=1}^\infty |a_n|^2  r^{2n}\leq 1\quad\text{for}\quad  r\leq 1/(2+|a_0|),\eeas
and the numbers $1/(1+|a_0|)$ and $1/(2+|a_0|)$ cannot be improved. Moreover,
\beas |a_0|^2+\sum_{n=1}^\infty |a_n| r^n+\left(\frac{1}{1+|a_0|}+\frac{ r}{1- r}\right)\sum_{n=1}^\infty |a_n|^2  r^{2n}\leq 1\quad\text{for}\quad  r\leq 1/2,\eeas
and the numbers $1/(1+|a_0|)$ and $1/2$ cannot be improved.\end{theoD}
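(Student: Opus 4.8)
The plan is to deduce both displayed inequalities from a single ``master estimate'': for every analytic $f(z)=\sum_{n=0}^\infty a_nz^n$ with $|f|\le 1$ in $\D$ and every $r\in[0,1)$,
\be\label{master}
\sum_{n=1}^\infty |a_n| r^n+\Big(\frac{1}{1+|a_0|}+\frac{r}{1-r}\Big)\sum_{n=1}^\infty |a_n|^2 r^{2n}\le (1-|a_0|^2)\,\frac{r}{1-r}.
\ee
Inequality \eqref{master} is exact for the M\"obius maps $\phi_a(z)=(a-z)/(1-az)$ with $a=|a_0|$: for these one checks, after an elementary cancellation, that both sides equal $(1-a^2)\,r/(1-r)$, which at once yields the sharpness of the constants $1/(1+|a_0|)$, $1/(2+|a_0|)$ and $1/2$. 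Granting \eqref{master}, its right-hand side is increasing in $r$ and equals $1-|a_0|$ at $r=1/(2+|a_0|)$, so adding $|a_0|$ produces the first inequality on $[0,\,1/(2+|a_0|)]$; the same right-hand side equals $1-|a_0|^2$ at $r=1/2$, so adding $|a_0|^2$ produces the second on $[0,\,1/2]$. The whole statement thus reduces to \eqref{master}.

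To establish \eqref{master} I would first normalize: multiplying $f$ by a suitable unimodular constant, we may assume $a_0=a\in[0,1)$ (the case $a=1$ being trivial). Put $h=f-a$. Then $\omega:=(f-a)/(1-af)$ is a Schwarz function and $f=(a+\omega)/(1+a\omega)$, whence $h=\Phi\circ\omega$ with $\Phi(w)=(1-a^2)w/(1+aw)$; a short computation shows that $\Phi$ maps $\D$ univalently onto the disk $\{w:|w+a|<1\}$, so $h$ is subordinate to a \emph{convex} univalent function with $|\Phi'(0)|=1-a^2$. From this I would take the two ingredients used below: the subordination bound $|a_n|\le|\Phi'(0)|=1-a^2$ for every $n\ge 1$, and $\sum_{n=1}^\infty |a_n|^2\le 1-a^2$ (Parseval, as $\|f\|_\infty\le 1$). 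The heuristic is that, for a family subordinate to a convex map, the linear sum $\sum|a_n|r^n$ can exceed $(1-a^2)r/(1-r)$ only when the coefficients cluster near their extremal sizes, and in that regime $\sum|a_n|^2r^{2n}$ is correspondingly small; \eqref{master} is the precise bookkeeping of this trade-off.

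Concretely I would write
\[
(1-a^2)\frac{r}{1-r}-\sum_{n=1}^\infty |a_n| r^n=\sum_{n=1}^\infty \big(1-a^2-|a_n|\big)\,r^n\ \ge\ 0,
\]
and try to absorb $\big(\tfrac1{1+a}+\tfrac{r}{1-r}\big)\sum_{n\ge1}|a_n|^2r^{2n}$ into this surplus after a redistribution of weights, using $|a_n|^2\le(1-a^2)|a_n|$ together with the second-order Carath\'eodory--Schur inequality $|a_1|^2+(1+a)|a_2|\le(1+a)(1-a^2)$ and its higher analogues to peel off the leading coefficients. I expect this step to be the main obstacle: the elementary bounds $|a_n|\le 1-a^2$ and $\sum|a_n|^2\le 1-a^2$ by themselves are insufficient --- inserting them separately already overshoots $1$ exactly at $r=1/(2+a)$ --- so one must genuinely exploit the finer coefficient structure of bounded analytic functions (equivalently, the convexity of $\Phi$) to close the argument, and arranging it so that equality is attained precisely along $\phi_a$ is where the delicacy lies.
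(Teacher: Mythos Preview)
Your reduction is exactly right and coincides with the paper's framework: your master estimate \eqref{master} is precisely Lemma~\ref{lem6} of the paper specialized to $\psi_n(r)=r^n$, and your deduction of both displayed inequalities from it (adding $|a_0|$, resp.\ $|a_0|^2$, and solving $(1-|a_0|^2)\tfrac{r}{1-r}\le 1-|a_0|$, resp.\ $\le 1-|a_0|^2$) is the intended argument. Your verification that $\phi_a$ gives equality in \eqref{master} is also correct and is how sharpness is seen.

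The genuine gap is in your proof of \eqref{master}. The subordination set-up with $\omega$ and the convex map $\Phi$ is a detour that does not by itself produce the inequality; as you yourself note, $|a_n|\le 1-a^2$ and $\sum|a_n|^2\le 1-a^2$ are not enough, and ``convexity of $\Phi$'' gives nothing beyond $|a_n|\le 1-a^2$. The paper's route (displayed in full for the variant Lemma~\ref{lem7}, and identical for Lemma~\ref{lem6}) is to invoke Carlson's inequalities (Lemma~\ref{lem3}):
\[
|a_{2k+1}|\le 1-\sum_{j=0}^{k}|a_j|^2,\qquad |a_{2k}|\le 1-\sum_{j=0}^{k-1}|a_j|^2-\frac{|a_k|^2}{1+|a_0|}.
\]
Split $\sum_{n\ge1}|a_n|r^n$ into even and odd indices, insert these bounds, and interchange the order of summation; one obtains directly
\[
\sum_{n\ge1}|a_n|r^n\le (1-|a_0|^2)\sum_{n\ge1}r^n-\sum_{n\ge1}|a_n|^2\Big(\frac{r^{2n}}{1+|a_0|}+\sum_{m\ge 2n+1}r^m\Big),
\]
which is exactly \eqref{master}. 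Your ``second-order Carath\'eodory--Schur inequality'' $|a_1|^2+(1+a)|a_2|\le(1+a)(1-a^2)$ is the case $k=1$ of Carlson's even bound, so the ``higher analogues'' you were reaching for \emph{are} Lemma~\ref{lem3}; using them in this organized way closes the argument in three lines, with no appeal to subordination or convexity.
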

In their study, the authors Liu {\it et al.} \cite{LLP2021} developed several refined versions of the Bohr-Rogosinski inequality, building upon the existing Bohr-type inequalities.
Here is one of them.
\begin{theoE}\cite{LLP2021} Let $f(z)=\sum_{n=0}^{\infty} a_nz^n$ be analytic in $\mathbb{D}$ and $|f(z)|\leq 1$. Then,
\beas |f(z)|+|f'(z)|r+\sum_{n=2}^\infty |a_n| r^n+\left(\frac{1}{1+|a_0|}+\frac{r}{1- r}\right)\sum_{n=1}^\infty |a_n|^2  r^{2n}\leq 1\eeas
for $r\leq (\sqrt{17}-3)/4$. The number $(\sqrt{17}-3)/4$ is the best possible. Moreover, 
\beas |f(z)|^2+|f'(z)|r+\sum_{n=2}^\infty |a_n| r^n+\left(\frac{1}{1+|a_0|}+\frac{r}{1- r}\right)\sum_{n=1}^\infty |a_n|^2  r^{2n}\leq 1\eeas
for $r\leq r_0$, where $r_0\approx 0.385795$ is the unique positive root of the equation $1-2r-r^2-r^3-r^4=0$ and the number $r_0$ is the best possible. 
\end{theoE}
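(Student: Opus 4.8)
The plan is to reduce everything to the single real parameter $a:=|a_0|=|f(0)|\in[0,1]$ and to four classical estimates, each of which is attained (for a suitable $a$) by the rotations of the automorphism $\phi_a(z)=(a-z)/(1-az)$: the Schwarz--Pick bound $|f(z)|\le(a+r)/(1+ar)$ for $|z|=r$; the coefficient inequalities $|a_n|\le 1-a^2$ for $n\ge1$ and $\sum_{n=1}^\infty|a_n|^2\le 1-a^2$; and the invariant form of the Schwarz lemma for the derivative, $|f'(z)|\le(1-|f(z)|^2)/(1-|z|^2)$. Together with the identity $1-\big((a+r)/(1+ar)\big)^2=(1-a^2)(1-r^2)/(1+ar)^2$ these supply all the analytic input; the rest is a careful one-variable estimate in $r$.

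First I would treat the ``leading block''. For the first inequality put $t=|f(z)|\in[0,(a+r)/(1+ar)]$ and use $r|f'(z)|\le r(1-t^2)/(1-r^2)$; since $t\mapsto t+r(1-t^2)/(1-r^2)$ is increasing on $[0,1]$ whenever $r\le\sqrt2-1$ (which comfortably contains $r\le(\sqrt{17}-3)/4$), evaluation at the endpoint gives
$$|f(z)|+r|f'(z)|\le\frac{a+r}{1+ar}+\frac{(1-a^2)r}{(1+ar)^2}.$$
For the second inequality the same argument applied to $t\mapsto t^2+r(1-t^2)/(1-r^2)$, which is increasing for $r\le(\sqrt5-1)/2$, yields $|f(z)|^2+r|f'(z)|\le\big((a+r)/(1+ar)\big)^2+(1-a^2)r/(1+ar)^2$. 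Both right-hand sides are exactly the corresponding quantities for $\phi_a$, so no sharpness is lost here.

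Next, the two remaining blocks are controlled by the coefficient bounds: $\sum_{n=2}^\infty|a_n|r^n\le(1-a^2)r^2/(1-r)$, and, from $|a_n|^2\le(1-a^2)^2$, $\sum_{n=1}^\infty|a_n|^2r^{2n}\le(1-a^2)^2r^2/(1-r^2)$, so that
$$\Big(\frac{1}{1+a}+\frac{r}{1-r}\Big)\sum_{n=1}^\infty|a_n|^2r^{2n}\le(1-a)(1-a^2)\frac{r^2}{1-r^2}+(1-a^2)^2\frac{r^3}{(1-r)(1-r^2)};$$
the key feature is that this whole contribution is $O\big((1-a)^2\big)$ as $a\to1^-$, hence does not affect the critical radius. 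Adding the three blocks dominates the left side by an explicit $F(a,r)$; using $(a+r)/(1+ar)=1-(1-a)(1-r)/(1+ar)$ (resp. the identity for its square) and cancelling the factor $1-a$ (resp. $1-a^2$), the inequality $F(a,r)\le1$ becomes one of the form $G(a,r)\le(1-r)/(1+ar)$ (resp., after regrouping, $\le(1-r-r^2)/(1+ar)^2$). The worst case is $a=1$: there the first inequality reduces to $2r(1-r)+2r^2(1+r)^2\le(1-r)^2(1+r)$, i.e. $(2r^2+3r-1)(r^2+1)\le0$, i.e. $r\le(\sqrt{17}-3)/4$, while the squared one reduces to $r^2(1+r)^2\le(1-r)(1-r-r^2)$, i.e. $1-2r-r^2-r^3-r^4\ge0$, i.e. $r\le r_0$.

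Finally, sharpness follows by taking $f=\phi_a$ at the point where all moduli align (equivalently $f(z)=(a+z)/(1+az)$ at $z=r$), where $|a_n|=a^{n-1}(1-a^2)$ and $f'(z)=(1-a^2)/(1+ar)^2$: expanding each term to first order in $1-a$ as $a\to1^-$, the quadratic block drops out and the remaining terms reproduce the polynomial conditions above, so the bound fails once $r$ exceeds the stated radius. The step I expect to be the real obstacle is the claim that $a=1$ is the worst case across all of $[0,1]$: the coefficient estimates are close to tight over a whole range of intermediate $a$ (at $r$ equal to the stated radius the slack is already tiny near, say, $a=0.8$), so this needs a genuine monotonicity-in-$a$ argument, and for small $a$ the estimates above are too lossy and must be sharpened — e.g. using $|f(z)|\le r$ and $|a_n|\le 1$ more carefully when $a$ is near $0$, or via a short case split $0\le a\le a_\ast$ versus $a_\ast\le a\le1$.
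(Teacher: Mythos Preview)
Your outline is on the right track and lands on the correct critical radii, but the gap you yourself flag at the end is real, and it is precisely the place where your argument diverges from the one the paper uses (implicitly, via the proofs of Theorems~\ref{T3} and~\ref{T4}, which specialize to Theorem~E when $K=1$ and $\psi_n(r)=r^n$).

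The difference is in how the ``refined'' quadratic block is handled. You bound $\sum_{n\ge 2}|a_n|r^n$ and $\big(\tfrac{1}{1+|a_0|}+\tfrac{r}{1-r}\big)\sum_{n\ge1}|a_n|^2 r^{2n}$ \emph{separately}, using only $|a_n|\le 1-a^2$; the second piece is then $O((1-a)^2)$ and drops out of the $a\to1^-$ analysis. The paper instead invokes Carlson's coefficient inequalities (Lemma~\ref{lem3}) through Lemma~\ref{lem7} to obtain the single estimate
\[
\sum_{n=2}^\infty |a_n|r^n+\left(\frac{1}{1+|a_0|}+\frac{r}{1-r}\right)\sum_{n=1}^\infty |a_n|^2 r^{2n}\le (1-a^2)\,\frac{r^2}{1-r},
\]
so the quadratic refinement is absorbed for free into the same bound you already had for the tail alone. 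Combined with the Schwarz--Pick steps (which you carry out exactly as the paper does), this produces a majorant of the form $1+\frac{(1-a)F(a,r)}{1+ar}$ with
\[
F(a,r)=\frac{(1+a)r}{1+ar}+(1+ar)(1+a)\frac{r^2}{1-r}-(1-r),
\]
and one checks directly that $\partial F/\partial a\ge 0$. Thus $F(a,r)\le F(1,r)$, and $F(1,r)\le 0$ is exactly the polynomial condition $(2r^2+3r-1)(r^2+1)\le 0$, i.e.\ $r\le(\sqrt{17}-3)/4$; the second inequality is handled analogously in Theorem~\ref{T4}. This is the clean monotonicity-in-$a$ argument you were looking for, and it requires no case split for small $a$. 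In short: your decomposition is correct but slightly lossy, and Lemma~\ref{lem7} (Carlson) is the missing ingredient that converts your heuristic ``$a=1$ is worst'' into an honest proof.
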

\subsection{Basic notations}
In order to present our results in an organized and coherent manner, it is necessary to introduce some basic notations at this preliminary stage of the discussion.\\[2mm]
 Let $f = u + iv$ be a complex-valued function of $z=x+i y$ in a simply connected domain $\Omega$. If $f$ satisfies the Laplace equation $\Delta f =4f_{z\ol z} = 0$  in $\Omega$, then $f$ is said to be harmonic in $\Omega$. In other words, $u$ and $v$ are real harmonic in $\Omega$. Note that every harmonic mapping $f$ has the canonical representation $f = h + \ol g$, where $h$ and 
$g$ are analytic in $\Omega$, known respectively as the analytic and co-analytic parts of $f$, and $\ol{g(z)}$ denotes the complex conjugate of $g(z)$. This representation is 
unique up to an additive constant (see \cite{D2004}). The inverse function theorem and a result of Lewy \cite{L1936} shows that a harmonic function $f$ is locally univalent in 
$\Omega$ if, and only if, the Jacobian  of $f$, defined by $J_f(z):=|h'(z)|^2-|g'(z)|^2$ is non-zero in $\Omega$. A harmonic mapping $f$ is locally univalent and sense-preserving
in $\Omega$ if, and only if, $J_f (z) > 0$ in $\Omega$ or equivalently if $h'\not=0$ in $\Omega$
and the dilatation $\omega_f:= \omega=g'/h'$ of $f$ has the property that $|\omega_f| < 1$ in $\Omega$ (see \cite{L1936}).\\[2mm]
\indent If a locally univalent and sense-preserving harmonic mapping $f = h+\ol{g}$ satisfies the condition $|g'(z)/h'(z)| \leq k < 1$ for $z\in\mathbb{D}$, then $f$ is said to be 
$K$-quasiconformal harmonic mapping on $\mathbb{D}$, where $K = (1+k)/(1-k) \geq 1$ (see \cite{K2008,M1968}). Clearly, $ k \to 1$ corresponds to the limiting case $K \to\infty$.\\[2mm]
\indent The generalization of the Bohr radius is an important study in understanding the Bohr phenomenon for various classes of functions. The idea involves replacing the basis sequence $\{r^n\}_{n=0}^\infty$ by a  more generalized sequence in the majorant series.\\[2mm]
Let $\mathcal{F}$ denote the set of all sequences $\{\psi_n(r)\}_{n=0}^\infty$ of non-negative continuous functions in $[0,1)$ such that the series $\sum_{n=0}^\infty \psi_n(r)$ 
converges locally uniformly on the interval $[0, 1)$. For an analytic function $f(z)=\sum_{n=0}^\infty a_n z^n$ in $\D$ with $|f(z)|\leq 1$, we see that 
\beas \left(\frac{1}{1+|a_0|}+\frac{r}{1-r}\right)\sum_{n=1}^\infty |a_n|^2 r^{2n}&=&\sum_{n=1}^\infty |a_n|^2\left(\frac{r^{2n}}{1+|a_0|}+\frac{r^{2n+1}}{1-r}\right)\\[2mm]
&=&\sum_{n=1}^\infty |a_n|^2\left(\frac{\psi_{2n}(r)}{1+|a_0|}+\Psi_{2n+1}(r)\right)\eeas
for $\psi_k(r)=r^k$, where $\Psi_t(r)=\sum_{k=t}^\infty \psi_k(r)$.
\subsection{A generalization of the classical Bohr theorem}
In a pioneering contribution to the literature, Kayumov {\it et al.} \cite{KKP2022} employed a different approach in their investigation of Bohr-type inequalities. Instead of relying on 
the traditional use of the basis sequence $\{r^n\}_{n=0}^\infty$, they introduced a new sequence of functions $\{\psi_n(r)\}_{n=0}^\infty\in\F$. By doing so, they were able to 
establish a generalization of the classical Bohr theorem.
\begin{theoF} \cite{KKP2022} 
Suppose that $f(z)=\sum_{n=0}^\infty a_n z^n$ is an analytic function in $\mathbb{D}$ with $|f(z)|\leq 1$ and $p\in(0,2]$. If $\{\psi_n(r)\}_{n=1}^\infty\in\F$ satisfies the inequality
\beas \frac{2}{p}\sum_{n=1}^\infty \psi_n(r)<\psi_0(r)\quad\text{for}\quad r\in[0,R),\eeas
where $R$ is minimal positive root of the equation $(2/p)\sum_{n=1}^\infty \psi_n(r)=\psi_0(r)$,
then the following inequality holds 
\beas |a_0|^p\psi_0(r)+\sum_{n=1}^\infty |a_n| \psi_n(r)\leq \psi_0(r)\quad\text{for}\quad r\leq R.\eeas
In the case when $(2/p)\sum_{n=1}^\infty \psi_n(r)>\psi_0(r)$ in some interval $(R, R+\epsilon)$, then the number $R$ cannot be improved. If the functions $\psi_k(r)$ $(k\geq 0)$ are smooth functions, then the last condition is equivalent to the inequality $(2/p)\sum_{n=1}^\infty \psi_n'(R)>\psi_0'(R)$. 
\end{theoF}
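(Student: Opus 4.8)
The plan is to reduce the generalized inequality to an elementary one–variable estimate by means of the coefficient bound that appears already in Bohr's original argument. Since $f(z)=\sum_{n=0}^\infty a_nz^n$ satisfies $|f(z)|\le 1$ in $\D$, applying the Schwarz--Pick lemma to $(f(z)-a_0)/(z(1-\ol{a_0}f(z)))$ yields the classical inequality $|a_n|\le 1-|a_0|^2$ for every $n\ge 1$. Writing $a:=|a_0|\in[0,1)$ and estimating term by term, this gives
\[
|a_0|^p\psi_0(r)+\sum_{n=1}^\infty |a_n|\psi_n(r)\le a^p\psi_0(r)+(1-a^2)\Psi_1(r),
\]
where $\Psi_1(r)=\sum_{n=1}^\infty\psi_n(r)$. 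For $r\le R$, the hypothesis together with the continuity of the functions involved and the fact that $R$ is the minimal root of $(2/p)\Psi_1(r)=\psi_0(r)$ gives $\Psi_1(r)\le \tfrac{p}{2}\psi_0(r)$, so the right-hand side above is at most $\bigl(a^p+\tfrac{p}{2}(1-a^2)\bigr)\psi_0(r)$.

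It then suffices to establish the scalar inequality $a^p+\tfrac{p}{2}(1-a^2)\le 1$ for all $a\in[0,1]$ and $p\in(0,2]$. Setting $\Phi(a):=1-a^p-\tfrac{p}{2}(1-a^2)$, one has $\Phi(1)=0$ and $\Phi'(a)=p\bigl(a-a^{p-1}\bigr)$, which is $\le 0$ on $(0,1)$ precisely because $a\le a^{p-1}$ there when $p\le 2$; hence $\Phi$ is nonincreasing and $\Phi\ge 0$ on $[0,1]$. Combining this with the chain of estimates above proves $|a_0|^p\psi_0(r)+\sum_{n=1}^\infty|a_n|\psi_n(r)\le\psi_0(r)$ for $r\le R$.

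For the sharpness of $R$, I would test the family $f_a(z)=(a-z)/(1-az)$ with $a\in(0,1)$, for which $|a_0|=a$ and $|a_n|=(1-a^2)a^{n-1}$ $(n\ge1)$. The corresponding Bohr-type sum equals $a^p\psi_0(r)+(1-a^2)\sum_{n=1}^\infty a^{n-1}\psi_n(r)$, so the excess over $\psi_0(r)$, divided by $1-a$, is
\[
(1+a)\sum_{n=1}^\infty a^{n-1}\psi_n(r)-\frac{1-a^p}{1-a}\,\psi_0(r).
\]
As $a\to 1^-$, monotone convergence gives $\sum_{n=1}^\infty a^{n-1}\psi_n(r)\to\Psi_1(r)$ and $(1-a^p)/(1-a)\to p$, so the limit is $2\Psi_1(r)-p\psi_0(r)$. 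By the assumption $(2/p)\Psi_1(r)>\psi_0(r)$ on $(R,R+\epsilon)$ this limit is strictly positive for such $r$, whence the inequality fails for $a$ sufficiently close to $1$; this shows $R$ cannot be improved. In the smooth case one notes that $(2/p)\Psi_1(R)=\psi_0(R)$ by the definition of $R$, so a first-order Taylor expansion at $R$ (with term-by-term differentiation of $\Psi_1$ justified by the local uniform convergence of the differentiated series) shows that $(2/p)\Psi_1(r)>\psi_0(r)$ on $(R,R+\epsilon)$ is equivalent to $(2/p)\Psi_1'(R)>\psi_0'(R)$.

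The main obstacle I anticipate is not the positive direction — which is the short reduction to $\Phi\ge0$ above — but the sharpness argument, where one must legitimately pass to the limit $a\to1^-$ inside the series $\sum a^{n-1}\psi_n(r)$ and control the factor $1/(1-a)$; everything ultimately rests on the coefficient estimate $|a_n|\le 1-|a_0|^2$, so it is worth isolating that as the key lemma.
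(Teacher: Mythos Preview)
Your proposal is correct and follows essentially the same route as the paper. The paper does not prove Theorem~F directly (it is quoted from \cite{KKP2022}) but recovers it as the special case $K=1$, $G\equiv 0$ of Theorem~\ref{T1}; in that specialization the argument is exactly yours: the coefficient bound $|a_n|\le 1-|a_0|^2$ reduces everything to the scalar inequality $(1-a^p)/(1-a^2)\ge p/2$, and sharpness is tested on $f_a(z)=(a-z)/(1-az)$ with $a\to 1^-$. The only cosmetic difference is in the scalar step: the paper differentiates the quotient $F(x)=(1-x^p)/(1-x^2)$ and needs a second auxiliary function $F_1$ to determine the sign of $F'$, whereas your computation $\Phi'(a)=p(a-a^{p-1})\le 0$ on the difference $\Phi(a)=1-a^p-\tfrac{p}{2}(1-a^2)$ reaches the same conclusion in one line.
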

Subsequently, Ponnusamy {\it et al.} \cite{PVW2021} carried out further research in this general form and established the following refined generalization of Bohr's theorem.
\begin{theoG} \cite{PVW2021} 
Suppose that $f(z)=\sum_{n=0}^\infty a_n z^n$ is an analytic function in $\mathbb{D}$ with $|f(z)|\leq 1$ and $p\in(0,2]$. If $\{\psi_n(r)\}_{n=1}^\infty\in\F$ satisfies the inequality
\beas \frac{2}{p}\sum_{n=1}^\infty \psi_n(r)<\psi_0(r)\quad\text{for}\quad r\in[0,R),\eeas
where $R$ is minimal positive root of the equation $2\sum_{n=1}^\infty \psi_n(r)=p\psi_0(r)$,
then the following inequality holds 
\beas |a_0|^p\psi_0(r)+\sum_{n=1}^\infty |a_n| \psi_n(r)+\sum_{n=1}^\infty |a_n|^2\left(\frac{\psi_{2n}(r)}{1+|a_0|}+\Psi_{2n+1}(r)\right)\leq \psi_0(r)\quad\text{for}\quad r\leq R,\eeas
where $\Psi_t(r)=\sum_{k=t}^\infty \psi_k(r)$.
In the case when $2\sum_{n=1}^\infty \psi_n(r)>p\psi_0(r)$ in some interval $(R, R+\epsilon)$, then the number $R$ cannot be improved.
\end{theoG}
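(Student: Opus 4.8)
The plan is to reduce Theorem G to one sharp coefficient inequality for bounded analytic functions together with an elementary one-variable estimate, following the pattern of Theorem F. After a rotation of $f$ we may assume $a_0=a:=|a_0|\in[0,1)$. The analytic core is the following \emph{refined Wiener inequality}: if $f(z)=\sum_{n\ge0}a_nz^n$ with $|f(z)|\le1$ in $\D$ and $a=|a_0|$, then for every $\ell\ge1$,
\beas |a_{2\ell+1}|+\sum_{n=1}^{\ell}|a_n|^2\le 1-a^2\qquad\text{and}\qquad |a_{2\ell}|+\frac{|a_{\ell}|^2}{1+a}+\sum_{n=1}^{\ell-1}|a_n|^2\le 1-a^2, \eeas
both of which become equalities for the Blaschke factor $(a-z)/(1-az)$. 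Granting these, multiply the inequality indexed by $m$ by the non-negative number $\psi_m(r)$ and sum over $m\ge1$; interchanging the order of summation, the modulus $|a_n|$ collects exactly the weight $\psi_n(r)$ while the square $|a_n|^2$ collects the weight $\psi_{2n}(r)/(1+a)$ (from the even index $m=2n$) together with every $\psi_k(r)$, $k\ge 2n+1$ (from the indices $m\ge 2n+1$), so that
\beas \sum_{n=1}^\infty |a_n|\psi_n(r)+\sum_{n=1}^\infty|a_n|^2\!\left(\frac{\psi_{2n}(r)}{1+a}+\Psi_{2n+1}(r)\right)\le (1-a^2)\sum_{n=1}^\infty\psi_n(r). \eeas
By hypothesis $\sum_{n\ge1}\psi_n(r)\le\frac p2\psi_0(r)$ for $r\le R$, so the right-hand side is at most $\frac p2(1-a^2)\psi_0(r)$; adding $a^p\psi_0(r)$ and using the elementary bound $a^p+\frac p2(1-a^2)\le1$ for $a\in[0,1]$, $p\in(0,2]$ — which holds since $\varphi(a):=1-a^p-\frac p2(1-a^2)$ satisfies $\varphi(1)=0$ and $\varphi'(a)=pa\,(1-a^{p-2})\le0$ on $(0,1)$ — yields the assertion.

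The substance of the proof, and the step I expect to be the main obstacle, is the refined Wiener inequality. The natural approach is the Schur--Wiener substitution $f=(a+\omega)/(1+a\omega)$ with $\omega:\D\to\D$, $\omega(0)=0$, $\omega(z)=\sum_{n\ge1}c_nz^n$, whence $\sum_{n\ge1}a_nz^n=(1-a^2)\,\omega/(1+a\omega)$ and each $a_n$ equals $(1-a^2)$ times an explicit polynomial in $a$ and $c_1,\dots,c_n$. One then iterates the Schur algorithm on $\omega$, applying the Schur--Pick bounds to successive parameters (which refine $|c_k|\le 1-|c_1|^2$ for $k\ge2$). For $\ell=1$ this is short: in the even case $|a_2|\le(1-a^2)(|c_2|+a|c_1|^2)$, $|a_1|=(1-a^2)|c_1|$ and $|c_2|\le 1-|c_1|^2$ combine to give $|a_2|+|a_1|^2/(1+a)\le(1-a^2)(|c_2|+|c_1|^2)\le 1-a^2$; the odd case is similar after one further step of the algorithm. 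The difficulty in general is to organise the triangle-inequality losses in the expansion of $\omega/(1+a\omega)$ so that, in aggregate, they cancel and the bound is attained only by $(a-z)/(1-az)$ — this is exactly the refinement of Wiener's estimate that underlies Theorem D, and I would either carry it through by this route or quote it from the prior literature.

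For sharpness, suppose $2\sum_{n\ge1}\psi_n(r)>p\psi_0(r)$ on some interval $(R,R+\epsilon)$, fix $r$ there, and test the conclusion on $f_a(z)=(a-z)/(1-az)$, $a\in(0,1)$, for which $|a_0|=a$ and $|a_n|=a^{n-1}(1-a^2)$. The left-hand side minus $\psi_0(r)$ is
\beas -(1-a^p)\psi_0(r)+(1-a^2)\sum_{n\ge1}a^{n-1}\psi_n(r)+(1-a^2)^2\sum_{n\ge1}a^{2n-2}\!\left(\frac{\psi_{2n}(r)}{1+a}+\Psi_{2n+1}(r)\right). \eeas
The last sum is non-negative; as $a\to1^-$ we have $1-a^p=p(1-a)+o(1-a)$, $1-a^2=2(1-a)+o(1-a)$, and $\sum_{n\ge1}a^{n-1}\psi_n(r)\to\sum_{n\ge1}\psi_n(r)$ by dominated convergence (the majorant $\sum_{n\ge1}\psi_n(r)$ is finite because $\{\psi_n\}\in\F$). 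Hence the expression is at least $(1-a)\big(2\sum_{n\ge1}\psi_n(r)-p\psi_0(r)\big)+o(1-a)>0$ for $a$ close enough to $1$, so the inequality fails throughout $(R,R+\epsilon)$ and $R$ is best possible.
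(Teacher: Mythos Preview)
Your proof is correct and follows essentially the same route as the paper (and the original source \cite{PVW2021}): the ``refined Wiener inequality'' you state is exactly Carlson's coefficient estimate recorded here as Lemma~\ref{lem3}, and your weighted summation of those inequalities is precisely Lemma~\ref{lem6}; the paper then combines this with the same elementary bound $(1-a^p)/(1-a^2)\ge p/2$ (derived here in the proof of Theorem~\ref{T1}) and uses the identical extremal function $(a-z)/(1-az)$ for sharpness. The only cosmetic difference is that you sketch a Schur-algorithm derivation of Carlson's inequalities, whereas the paper simply quotes them from \cite{C1940,PVW2020}; either way, no new idea is needed beyond what you have written.
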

\noindent In light of the aforementioned findings, several questions naturally arise with regard to this study.
\begin{que}\label{Q1} Instead of the conventional dependence on the basis sequence $\{r^n\}_{n=0}^\infty$, is it possible to establish general results by using the sequence 
$\{\psi_n(r)\}_{n=0}^\infty\in\F$ in the majorant series for harmonic mappings in the settings of \textrm{Theorems B-E}?
 \end{que} 
\begin{que}\label{Q2} Can we establish the harmonic extension of the generalized Bohr inequality in the context of  \textrm{Theorem F}?\end{que}
\begin{que}\label{Q3} Can we establish the harmonic extension of the generalized Bohr inequality in refined form in the context of  \textrm{Theorem G}?\end{que}
\begin{que}\label{Q4} Can we establish several sharp versions of generalized Bohr inequalities for harmonic mappings in the context of \textrm{Theorems F} and \textrm{G}? \end{que}
The purpose of this paper is primarily to answer Questions \ref{Q1}-\ref{Q4} in the affirmative.\\[2mm]
The organization of the remaining part of this paper is: In Section $2$, we present a number of lemmas and a proof of one of these lemmas, which are essential for the proof of our main theorems. In Section $3$, we establish a sharp result that generalizes the Bohr inequality in an improved form for harmonic mappings. As a consequence, we establish a few 
corollaries, remarks, and several sharply improved Bohr inequalities in the context of harmonic mappings. Furthermore, we establish a convolution counterpart of the Bohr theorem for 
harmonic mapping within the context of the Gaussian hypergeometric function. In Section 4, we obtain a number of sharp results that generalize the Bohr inequality in a refined form 
for harmonic mappings. As a consequence, we establish a few 
corollaries, remarks, and several sharply refined Bohr inequalities in the context of harmonic mappings.
\section{Some lemmas}
In order to prove the main results, it is necessary to utilize the following lemmas, which are essential to this research.
\begin{lem}\label{lem1} \cite[Pick's invariant form of Schwarz’s lemma]{K2006} Suppose $f$ is analytic in $\mathbb{D}$ with $|f(z)|\leq1$, then 
\beas |f(z)|\leq \frac{|f(0)|+|z|}{1+|f(0)||z|}\quad\text{for}\quad z\in\mathbb{D}.\eeas\end{lem}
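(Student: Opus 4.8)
The plan is to reduce the statement to the ordinary Schwarz lemma by composing $f$ with disk automorphisms on both sides. Set $a=f(0)$; we may assume $0<|a|<1$, since the case $a=0$ is precisely Schwarz's lemma while the case $|a|=1$ forces $f$ to be a unimodular constant by the maximum modulus principle, in which situation the asserted inequality reads $1\le 1$. Introduce the M\"obius automorphism $\varphi_a(w)=\dfrac{a-w}{1-\ol a\,w}$ of $\D$; it satisfies $\varphi_a(0)=a$, $\varphi_a(a)=0$, and (by a short computation) $\varphi_a\circ\varphi_a=\mathrm{id}$. Then $g:=\varphi_a\circ f$ is analytic in $\D$, maps $\D$ into $\ol\D$, and $g(0)=\varphi_a(a)=0$, so Schwarz's lemma gives $|g(z)|\le|z|$ for all $z\in\D$. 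Since $\varphi_a$ is an involution, $f=\varphi_a\circ g$, hence $|f(z)|=\left|\dfrac{a-g(z)}{1-\ol a\,g(z)}\right|$ for every $z\in\D$.

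The next step is the elementary estimate
\[
\left|\frac{a-w}{1-\ol a\,w}\right|\le\frac{|a|+\rho}{1+|a|\,\rho}\qquad\text{whenever}\quad |w|\le\rho<1 .
\]
Because $w\mapsto(a-w)/(1-\ol a\,w)$ is an automorphism of $\D$ (its only pole is at $1/\ol a$, which lies outside the closed subdisk $\{|w|\le\rho\}$), its modulus has no interior maximum on that subdisk, so it suffices to bound it on the circle $|w|=\rho$. Writing $w=\rho e^{i\theta}$ and $t=\mathrm{Re}(\ol a\,e^{i\theta})\in[-|a|,|a|]$, one computes
\[
\left|\frac{a-w}{1-\ol a\,w}\right|^2=\frac{|a|^2+\rho^2-2\rho t}{1+|a|^2\rho^2-2\rho t},
\]
and since the numerator minus the denominator equals $-(1-|a|^2)(1-\rho^2)<0$, this expression is decreasing in $t$; its maximum is therefore attained at $t=-|a|$, giving the value $\big((|a|+\rho)/(1+|a|\rho)\big)^2$, as claimed.

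Finally I would assemble the pieces: applying the displayed estimate with $w=g(z)$ and $\rho=|z|$ (legitimate since $|g(z)|\le|z|$) yields $|f(z)|\le\dfrac{|a|+|z|}{1+|a|\,|z|}$, which is exactly the assertion; here one also uses that $\rho\mapsto(|a|+\rho)/(1+|a|\rho)$ is nondecreasing on $[0,1)$ — its derivative being $(1-|a|^2)/(1+|a|\rho)^2>0$ — so replacing $|g(z)|$ by the larger quantity $|z|$ only enlarges the right-hand side. There is no genuine obstacle in this argument; the only points needing a little care are the verification that $\varphi_a$ is a $\D$-automorphism and an involution, and the one-variable optimization over the circle $|w|=\rho$. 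Alternatively, the lemma follows in a single line from the general Schwarz--Pick inequality for the hyperbolic metric, applied to the pair of points $0$ and $z$ and rewritten in terms of Euclidean moduli.
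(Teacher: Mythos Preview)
Your proof is correct and is, in fact, the standard reduction of Pick's inequality to the classical Schwarz lemma via composition with the involutive automorphism $\varphi_a$; the only remark is that the paper does not supply its own proof of this lemma at all---it is simply quoted from \cite{K2006} as a known tool---so there is nothing to compare against, and your argument stands on its own.
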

\begin{lem}\cite{DP2008}\label{lem2} Suppose $f$ is analytic in $\mathbb{D}$ with $|f(z)|\leq1$, then we have
\beas \frac{\left|f^{(n)}(z)\right|}{n!}\leq \frac{1-|f(z)|^2}{(1-|z|)^{n-1}(1-|z|^2)}\quad\text{and}\quad |a_n|\leq 1-|a_0|^2\quad\text{for}\quad n\geq 1,\;|z|<1.\eeas\end{lem}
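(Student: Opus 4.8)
The plan is to deduce both inequalities from a single estimate: the coefficient bound $|a_n|\le1-|a_0|^2$ is simply the derivative estimate evaluated at $z=0$, so it suffices to prove the first assertion. The whole argument rests on the first-order Schwarz--Pick estimate
\[
F\colon\D\to\overline{\D}\ \text{analytic}\ \Longrightarrow\ |F'(0)|\le1-|F(0)|^2,
\]
which follows by post-composing $F$ with the automorphism $\sigma(w)=(w-F(0))/(1-\overline{F(0)}\,w)$ of $\overline{\D}$: the function $g=\sigma\circ F$ maps $\D$ into $\overline{\D}$ with $g(0)=0$, so Lemma \ref{lem1} gives $|g(w)|\le|w|$, hence $|g'(0)|\le1$, and the chain rule together with $\sigma'(F(0))=1/(1-|F(0)|^2)$ yields the bound. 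First I would upgrade this to a \emph{sharpened coefficient estimate}: if $F(w)=\sum_{k\ge0}b_kw^k$ is analytic on $\D$ with $|F|\le1$, then $|b_k|\le1-|b_0|^2$ for every $k\ge1$. To see this, average $F$ over $k$-th roots of unity, $\widetilde F(w):=\frac1k\sum_{j=0}^{k-1}F\big(e^{2\pi ij/k}w\big)=b_0+b_kw^k+b_{2k}w^{2k}+\cdots$; this is again bounded by $1$ on $\D$, and writing $\widetilde F(w)=\Phi(w^k)$ one gets $\Phi\colon\D\to\overline{\D}$ with $\Phi(0)=b_0$ and $\Phi'(0)=b_k$, so the first-order estimate applied to $\Phi$ delivers $|b_k|\le1-|b_0|^2$.

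Next I would transplant this to an arbitrary point. Fix $z\in\D$ and set $\varphi(w)=(w+z)/(1+\bar z w)$, so $F:=f\circ\varphi$ is analytic on $\D$ with $|F|\le1$ and $F(0)=f(z)$; writing $F(w)=\sum_{k\ge0}b_kw^k$, the previous step gives $|b_k|\le1-|f(z)|^2$ for $k\ge1$. Since $\varphi^{-1}(\zeta)=(\zeta-z)/(1-\bar z\zeta)$, we have $f(\zeta)=b_0+\sum_{k\ge1}b_k\,\varphi^{-1}(\zeta)^k$, with uniform convergence on a closed subdisk around $z$. Putting $t=\zeta-z$, a short computation gives $\varphi^{-1}(\zeta)=\dfrac{t}{(1-|z|^2)(1-\beta t)}$ with $\beta=\bar z/(1-|z|^2)$, so the coefficient of $t^n$ in $\varphi^{-1}(\zeta)^k$ equals $\binom{n-1}{k-1}\beta^{n-k}/(1-|z|^2)^k$ for $1\le k\le n$ and vanishes for $k>n$ (as $\varphi^{-1}$ has a simple zero at $z$). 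Extracting the coefficient of $t^n$ in $f$ and using the triangle inequality with $|\beta|=|z|/(1-|z|^2)$,
\[
\frac{|f^{(n)}(z)|}{n!}\le\frac{1-|f(z)|^2}{(1-|z|^2)^n}\sum_{k=1}^{n}\binom{n-1}{k-1}|z|^{n-k}=\frac{(1-|f(z)|^2)(1+|z|)^{n-1}}{(1-|z|^2)^n},
\]
and since $(1+|z|)^{n-1}/(1-|z|^2)^n=1/\big((1-|z|)^{n-1}(1-|z|^2)\big)$ this is exactly the asserted bound; setting $z=0$ recovers $|a_n|\le1-|a_0|^2$ for $n\ge1$.

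The routine parts are the elementary identities — the binomial expansion of $(1-\beta t)^{-k}$, the evaluation $\sum_{k=1}^{n}\binom{n-1}{k-1}|z|^{n-k}=(1+|z|)^{n-1}$, and the simplification of the powers of $1\pm|z|$ — together with the standard facts that a disk automorphism carries $\overline{\D}$ to $\overline{\D}$ and that an average of maps into $\overline{\D}$ again maps into $\overline{\D}$. The main thing to get right is the passage from ``Taylor coefficient at $0$ of $F$'' to ``Taylor coefficient at $z$ of $f$'': one must justify expanding $f=F\circ\varphi^{-1}$ term by term about $z$ and interchanging the infinite sum with both the extraction of the $t^n$-coefficient and the triangle inequality. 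This is legitimate because $\sum b_k\varphi^{-1}(\zeta)^k$ converges uniformly on a closed disk about $z$ inside $\D$ and $\varphi^{-1}(\zeta)^k=O((\zeta-z)^k)$, so only finitely many terms affect the $n$-th coefficient; I expect this bookkeeping, rather than any genuine obstacle, to be where care is needed.
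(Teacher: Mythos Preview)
The paper does not supply its own proof of this lemma: it is quoted verbatim from \cite{DP2008} and used as a black box, so there is nothing in the paper to compare your argument against.

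Your proof is correct and self-contained. The averaging trick $\widetilde F(w)=\frac1k\sum_{j}F(e^{2\pi ij/k}w)=\Phi(w^k)$ is a clean way to reduce the coefficient bound $|b_k|\le1-|b_0|^2$ to the first-order Schwarz--Pick estimate $|\Phi'(0)|\le1-|\Phi(0)|^2$, and the transplantation via the automorphism $\varphi$ together with the explicit expansion of $\varphi^{-1}(\zeta)^k$ in powers of $t=\zeta-z$ is carried out accurately; the binomial identity and the simplification $(1+|z|)^{n-1}/(1-|z|^2)^n=1/\big((1-|z|)^{n-1}(1-|z|^2)\big)$ are exactly what is needed. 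One small remark: your appeal to Lemma~\ref{lem1} for the bound $|g(w)|\le|w|$ when $g(0)=0$ is really just the classical Schwarz lemma, of which Lemma~\ref{lem1} is the invariant form, so the dependence could be stated more directly. Otherwise the bookkeeping you flag (only finitely many $k$ contribute to the $t^n$-coefficient because $\varphi^{-1}(\zeta)^k$ vanishes to order $k$ at $z$) is handled correctly, and the deduction of $|a_n|\le1-|a_0|^2$ by specializing $z=0$ is immediate.
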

\begin{lem}\label{lem3}\cite{C1940}\cite[Lemma B]{PVW2020} Suppose that $f\in\mathcal{B}(\mathbb{D},\mathbb{C})$ and $f(z) = \sum_{k=0}^\infty a_kz^k$. Then the following inequalities hold.
\begin{enumerate}
\item[(i)] $\vert a_{2k+1}\vert\leq 1-\vert a_0\vert^2-\cdots-\vert a_k\vert^2$, $k=0,1,\ldots$
\item[(ii)] $\vert a_{2k}\vert\leq 1-\vert a_0\vert^2-\cdots-\vert a_{k-1}\vert^2-\frac{\vert a_k\vert^2}{1+\vert a_0\vert}$, $k=1,2,\ldots$.
\end{enumerate}
Further, to have equality in $(i)$ it is necessary that $f$ is a rational function of the form 
\beas f(z)=\frac{a_0+a_1z+\cdots+a_nz^n+\epsilon z^{2n+1}}{1+(\ol{a_n}z^n+\cdots+\ol{a_0}z^{2n+1})\epsilon},\quad|\epsilon|=1,\eeas
and to have the equality in $(ii)$ it is necessary that $f$ is a rational function of the form 
\beas f(z)=\frac{a_0+a_1z+\cdots+(a_n/(1+|a_0|))z^n+\epsilon z^{2n+1}}{1+((\ol{a_n}/(1+|a_0|))z^n+\cdots+\ol{a_0}z^{2n})\epsilon},\quad|\epsilon|=1,\eeas
where the term $a_0\ol{a_n}^2\epsilon$ is non-negative real.
\end{lem}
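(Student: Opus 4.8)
The statement is classical---it is due in essence to Clunie and is recorded as Lemma~B in \cite{PVW2020}---so the plan is to give a self-contained derivation based on the \emph{Schur algorithm}, using only Pick's form of the Schwarz lemma (Lemma~\ref{lem1}) and the elementary coefficient bound $|a_n|\le 1-|a_0|^2$ (Lemma~\ref{lem2}). I would prove $(i)$ and $(ii)$ simultaneously by induction on $k$. The base case of $(i)$---that is, $|a_1|\le 1-|a_0|^2$---is precisely Lemma~\ref{lem2}. For the base case $k=1$ of $(ii)$ one carries out a single reduction step: assuming $|a_0|<1$ (if $|a_0|=1$ then $f$ is a unimodular constant and there is nothing to prove), the function $f_1(z):=(f(z)-a_0)/(z(1-\overline{a_0}f(z)))$ is analytic on $\mathbb{D}$ with $|f_1(z)|\le 1$ by Schwarz--Pick; from the inversion formula $f(z)=(a_0+zf_1(z))/(1+\overline{a_0}zf_1(z))$ one gets $a_1=(1-|a_0|^2)f_1(0)$ and $a_2=(1-|a_0|^2)(f_1'(0)-\overline{a_0}f_1(0)^2)$, and then $|f_1'(0)|\le 1-|f_1(0)|^2$ (Lemma~\ref{lem2} applied to $f_1$), together with $|f_1(0)|=|a_1|/(1-|a_0|^2)$ and the identity $(1-|a_0|^2)/(1+|a_0|)=1-|a_0|$, yields $|a_2|\le 1-|a_0|^2-|a_1|^2/(1+|a_0|)$ after simplification.

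For the inductive step I would again pass to the Schur iterate $f_1=\sum_{n\ge 0}b_nz^n$. Expanding the geometric series in $f(z)-a_0=(1-|a_0|^2)zf_1(z)/(1+\overline{a_0}zf_1(z))$ gives the triangular system $a_n=(1-|a_0|^2)\sum_{m=1}^{n}(-\overline{a_0})^{m-1}\,c_{n-m}(f_1^m)$, where $c_j(\cdot)$ denotes the $j$-th Taylor coefficient; thus each $a_n$ is an explicit polynomial in $\overline{a_0}$ and $b_0,\dots,b_{n-1}$. Substituting the inductive hypotheses for $f_1$---namely $(i)$ and $(ii)$ at level $k-1$ applied to $f_1$, and $|b_j|\le 1-|b_0|^2$---and re-expressing the partial sums $\sum_{j=0}^{k}|a_j|^2$ (respectively the $(ii)$-weighted sum) through $b_0,\dots,b_{k-1}$, the inequalities for $f$ reduce to polynomial inequalities in $|b_0|,\dots,|b_{k-1}|$ and $|a_0|$ that I would clear by repeated use of the weighted estimate $2xy\le x^2/c+cy^2$ with $c=1+|b_0|$ (the same mechanism that regenerated the factor $1/(1+|a_0|)$ above). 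The equality statements then fall out by unwinding: equality forces Schwarz's lemma to be sharp at the relevant reduction and every weighted AM--GM step to be an equality, which pins $f_1(0),f_2(0),\dots$ at extremal values; reconstructing $f$ from its Schur iterates produces exactly the stated Blaschke-type rational functions, the condition that $a_0\overline{a_n}^2\epsilon$ be non-negative being the phase constraint coming from the last such step.

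The step I expect to be the main obstacle is the estimation of $a_n$ itself. The crude bound $|a_n|\le(1-|a_0|^2)\sum_{m=1}^{n}|a_0|^{m-1}\,|c_{n-m}(f_1^m)|$ coming from the triangle inequality is already too weak once $k\ge1$: combined with the inductive hypothesis for $f_1$ it produces a right-hand side that can exceed the bound claimed in $(i)$, even though that bound does hold, because the mixed terms $-2\overline{a_0}b_0b_1,\ \overline{a_0}^2b_0^3,\dots$ carry phases that must be retained and played off against the negative contributions of $-\sum_{j=1}^{k}|a_j|^2$. Consequently the delicate part of the argument is the bookkeeping that pairs each monomial in the expansion of $a_n$ with a matching monomial from the expansion of $|a_j|^2=(1-|a_0|^2)^2\,|b_{j-1}-\overline{a_0}\,c_{j-2}(f_1^2)+\cdots|^2$, so that the weighted estimates can be applied with the right sign; carrying the equality conditions through this matching is the second place where care is needed.
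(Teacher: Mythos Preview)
The paper does not prove Lemma~\ref{lem3} at all: it is merely quoted from \cite{C1940} and \cite[Lemma B]{PVW2020} as a known tool, so there is no ``paper's own proof'' to compare against. (Incidentally, the result is due to Carlson, not Clunie.)

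Your Schur-algorithm approach is the standard route to these inequalities and is correct in outline; the base cases you work out are right. What you have written, however, is a plan rather than a proof: you yourself flag that the inductive step---tracking the phase cancellations between the expansion of $a_n$ and the expansions of $|a_1|^2,\dots,|a_k|^2$ so that the weighted AM--GM pairings close up---is the delicate part, and you do not actually carry it out. If you intend to include a self-contained proof, that bookkeeping must be written down explicitly (or you must find a cleaner organizing identity, as Carlson does via the Schur parameters $\gamma_j=f_j(0)$ and the relation $1-\sum_{j=0}^{k}|a_j|^2=\prod_{j=0}^{k}(1-|\gamma_j|^2)\cdot(\text{positive factor})$, which sidesteps the term-by-term matching). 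For the purposes of this paper a citation suffices.
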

\begin{lem}\label{lem4}\cite{PVW2021} Let $\{\psi_n(r)\}_{n=1}^\infty$ be a decreasing sequence of non-negative functions in $[0,r_\psi)$, and $h$, $g$ be analytic in $\Bbb{D}$ such that $|g'(z)|\leq k|h'(z)|$ in $\Bbb{D}$ and for some $k\in[0,1]$, where $h(z)=\sum_{n=0}^\infty a_n z^n$ and $g(z)=\sum_{n=0}^\infty b_n z^n$. Then, 
\beas \sum_{n=1}^\infty |b_n|^2 \psi_n(r)\leq k^2\sum_{n=1}^\infty |a_n|^2 \psi_n(r)\quad\text{for}\quad r\in[0,r_\psi).\eeas\end{lem}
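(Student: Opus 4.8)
The plan is to prove \text{Lemma }\ref{lem4} by reducing to the scalar quadratic-form inequality that governs coefficient transference under a bounded dilatation. First I would write $\phi(z) := g'(z)/h'(z)$, which is analytic in $\D$ with $|\phi(z)| \le k$ there, so that $g'(z) = \phi(z) h'(z)$. Writing $h'(z) = \sum_{n=0}^\infty (n+1)a_{n+1} z^n$ and $\phi(z) = \sum_{m=0}^\infty c_m z^m$ with $\sum_{m=0}^\infty |c_m|^2 \le k^2$ (the latter from $|\phi| \le k$ on $\D$ via Parseval), the Cauchy product gives $(n+1)b_{n+1} = \sum_{j=0}^{n} c_{n-j}(j+1)a_{j+1}$. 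The classical area-type / Schur test argument then yields, for each fixed $\rho \in (0,1)$,
\beas
\sum_{n=1}^\infty n|b_n|^2 \rho^{2n} \le k^2 \sum_{n=1}^\infty n|a_n|^2 \rho^{2n},
\eeas
which is exactly the statement that the operator "multiplication by $\phi$" is a contraction (up to the factor $k$) on the weighted Hardy/Dirichlet-type space with weights $\{n\rho^{2n}\}$. This is the standard starting inequality; it is recorded in \cite{PVW2021}, so I may invoke it, but I would include the short self-contained derivation via the relation $\int_{|z|=\rho}|g'|^2 = \int_{|z|=\rho}|\phi h'|^2 \le k^2 \int_{|z|=\rho}|h'|^2$ integrated against $d\rho/\rho$ over $(0,r)$.

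Next I would convert this family of inequalities, indexed by the weights $\rho^{2n}$, into the desired inequality for an arbitrary decreasing sequence $\{\psi_n(r)\}$. The key device is that any decreasing non-negative sequence $\{\psi_n(r)\}_{n=1}^\infty$ (for fixed $r$) admits a representation as a non-negative superposition of the "step" sequences $\mathbf{1}_{\{n \le N\}}$; concretely,
\beas
\psi_n(r) = \sum_{N=n}^\infty \big(\psi_N(r) - \psi_{N+1}(r)\big),
\eeas
with every summand $\psi_N(r)-\psi_{N+1}(r) \ge 0$ by monotonicity (and $\psi_N(r)\to 0$, which holds because the sequence is decreasing and the relevant series converges). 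Hence it suffices to prove $\sum_{n=1}^N |b_n|^2 \le k^2 \sum_{n=1}^N |a_n|^2$ for every finite $N$, i.e.\ the case of the partial-sum weights, and then sum the telescoped pieces with non-negative coefficients. Alternatively, and perhaps more cleanly, one uses Abel summation directly: writing $\Delta\psi_N := \psi_N(r) - \psi_{N+1}(r) \ge 0$, one has $\sum_{n=1}^\infty |b_n|^2 \psi_n(r) = \sum_{N=1}^\infty \Delta\psi_N \big(\sum_{n=1}^N |b_n|^2\big)$, and similarly for the $a_n$'s, so the whole inequality follows once the truncated inequality is known for each $N$.

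So the real content is the truncated estimate $\sum_{n=1}^N |b_n|^2 \le k^2 \sum_{n=1}^N |a_n|^2$. For this I would argue that $g'(z) = \phi(z)h'(z)$ forces, on the level of the first $N$ Taylor coefficients of $g'$, a contraction: the vector $(b_1,\dots,b_N)$ (suitably weighted) is the image of $(a_1,\dots,a_N)$ (suitably weighted) under a lower-triangular Toeplitz matrix built from $(c_0,\dots,c_{N-1})$, whose operator norm is bounded by $\sup|\phi| \le k$ because it is the compression of the Toeplitz/Laurent operator with symbol $\phi$ to a coordinate subspace. Passing $\rho \to 1^-$ in the weighted inequality of the first paragraph, applied only to the partial sums, gives exactly this. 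The main obstacle is handling this limiting/compression step rigorously — in particular justifying that one may take $\rho\to1$ inside a finite sum (trivial once the sum is truncated, which is precisely why the reduction to finite $N$ via Abel summation is the crucial maneuver) and confirming that $\{\psi_n(r)\}$ decreasing plus the convergence hypothesis legitimately gives $\psi_n(r)\to 0$ and the telescoping identity with non-negative terms. Everything else is bookkeeping: once the finite-$N$ contraction and the Abel summation are in place, the lemma follows by summing non-negative contributions, and the conclusion holds for all $r \in [0, r_\psi)$.
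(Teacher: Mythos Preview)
The paper does not prove this lemma at all; it is quoted from \cite{PVW2021} without argument. So there is no ``paper's proof'' to compare against, and your proposal has to be judged on its own.

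Your overall architecture is correct and is essentially the standard one: (i) use $g'=\phi h'$ with $|\phi|\le k$; (ii) obtain a contraction at the level of truncated Taylor coefficients via the lower--triangular Toeplitz compression of $M_\phi$; (iii) pass from truncated estimates to an arbitrary decreasing weight $\{\psi_n(r)\}$ by Abel summation. Two small points deserve tightening.

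First, the Toeplitz compression acts on the Taylor coefficients of $h'$ and $g'$, which are $na_n$ and $nb_n$, not $a_n$ and $b_n$. Hence what you actually get for free is
\[
\sum_{n=1}^{N} n^{2}|b_n|^{2}\;\le\;k^{2}\sum_{n=1}^{N} n^{2}|a_n|^{2}\qquad(N\ge1),
\]
not the unweighted $\sum_{n=1}^N |b_n|^2\le k^2\sum_{n=1}^N|a_n|^2$ that your Abel step asks for. This is not fatal: since $\psi_n(r)$ is decreasing, so is $\psi_n(r)/n^{2}$, and one simply runs the Abel summation with the weights $\psi_n(r)/n^{2}$ against the $n^{2}$--weighted partial sums to reach the conclusion directly. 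Your sentence ``passing $\rho\to1^-$ \dots applied only to the partial sums'' does not literally produce the unweighted truncated estimate; replace it by the observation just made.

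Second, you do not need $\psi_n(r)\to0$. The lemma as stated does not assume $\sum_n\psi_n(r)<\infty$, and finite Abel summation already gives
\[
\sum_{n=1}^{M}|b_n|^{2}\psi_n(r)=\sum_{n=1}^{M-1}\big(\psi_n(r)-\psi_{n+1}(r)\big)S_n^{(b)}+\psi_M(r)S_M^{(b)},
\]
with all coefficients non-negative, where $S_n^{(b)}$ denotes the relevant partial sum; comparing termwise with the $a$--version and letting $M\to\infty$ finishes the proof without any tail hypothesis. With these two adjustments your argument is complete.
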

\begin{lem}\cite[Proof of Theorem 1]{PVW2021}\label{lem6} Suppose $f(z)=\sum_{n=0}^\infty a_nz^n$ is analytic in $\mathbb{D}$ with $|f(z)|\leq1$ and $\{\psi_n(r)\}_{n=1}^\infty \in\F$. Then, we have the following inequality:
\beas \sum_{n=1}^\infty |a_n|\psi_n(r)+\sum_{n=1}^\infty |a_n|^2\left(\frac{\psi_{2n}(r)}{1+|a_0|}+\Psi_{2n+1}(r)\right)\leq \left(1-|a_0|^2\right)\Psi_1(r),\eeas
where $\Psi_t(r)=\sum_{k=t}^\infty \psi_k(r)$.
\end{lem}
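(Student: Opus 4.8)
The plan is to bound the linear sum $\sum_{n\ge1}|a_n|\psi_n(r)$ coefficientwise, using the Carath\'eodory-type estimates of Lemma~\ref{lem3} after splitting it according to the parity of the index, and then to check that the resulting majorant --- once the quadratic sum is restored --- collapses exactly to $(1-|a_0|^2)\Psi_1(r)$. Before that I would record the bookkeeping that legitimizes the manipulations: the functions $\psi_n$ are non-negative, Lemma~\ref{lem3}(i) (letting $k\to\infty$) gives $\sum_{n=0}^\infty|a_n|^2\le1$, and $\sum_{n\ge1}\psi_n(r)=\Psi_1(r)<\infty$; hence every double series below has non-negative terms and is dominated by $\Psi_1(r)$, so all rearrangements and interchanges of summation order are valid. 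The case $|f|\equiv1$ --- where $f$ is a unimodular constant and both sums on the left vanish --- being trivial, one may assume $|a_0|<1$.

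Write $\sum_{n\ge1}|a_n|\psi_n(r)=\sum_{k\ge1}|a_{2k}|\psi_{2k}(r)+\sum_{k\ge0}|a_{2k+1}|\psi_{2k+1}(r)$ and insert the bounds $|a_{2k+1}|\le 1-\sum_{j=0}^{k}|a_j|^2$ and $|a_{2k}|\le 1-\sum_{j=0}^{k-1}|a_j|^2-|a_k|^2/(1+|a_0|)$ from Lemma~\ref{lem3}. The ``constant'' contributions sum to $\sum_{k\ge1}\psi_{2k}(r)+\sum_{k\ge0}\psi_{2k+1}(r)=\Psi_1(r)$, so
\beas
\sum_{n\ge1}|a_n|\psi_n(r)\le\Psi_1(r)-\sum_{k\ge1}\Big(\sum_{j=0}^{k-1}|a_j|^2\Big)\psi_{2k}(r)-\sum_{k\ge1}\frac{|a_k|^2}{1+|a_0|}\psi_{2k}(r)-\sum_{k\ge0}\Big(\sum_{j=0}^{k}|a_j|^2\Big)\psi_{2k+1}(r).
\eeas
Adding the quadratic sum $\sum_{n\ge1}|a_n|^2\big(\psi_{2n}(r)/(1+|a_0|)+\Psi_{2n+1}(r)\big)$ to the right-hand side, the term $\sum_{k\ge1}|a_k|^2\psi_{2k}(r)/(1+|a_0|)$ cancels the middle sum, and the whole claim reduces to the algebraic identity
\beas
\sum_{k\ge1}\Big(\sum_{j=0}^{k-1}|a_j|^2\Big)\psi_{2k}(r)+\sum_{k\ge0}\Big(\sum_{j=0}^{k}|a_j|^2\Big)\psi_{2k+1}(r)-\sum_{n\ge1}|a_n|^2\,\Psi_{2n+1}(r)=|a_0|^2\,\Psi_1(r).
\eeas

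The crux is the last sum on the left. Writing $\Psi_{2n+1}(r)=\sum_{m\ge2n+1}\psi_m(r)$ and interchanging summations, $\sum_{n\ge1}|a_n|^2\Psi_{2n+1}(r)=\sum_{m\ge3}\psi_m(r)\sum_{1\le n\le(m-1)/2}|a_n|^2$, where the inner sum is $\sum_{j=1}^{k-1}|a_j|^2$ when $m=2k$ and $\sum_{j=1}^{k}|a_j|^2$ when $m=2k+1$. Subtracting this termwise from the first two sums, within the even family the coefficient of $\psi_{2k}(r)$ becomes $\sum_{j=0}^{k-1}|a_j|^2-\sum_{j=1}^{k-1}|a_j|^2=|a_0|^2$ for every $k\ge1$ (for $k=1$ there is nothing to subtract, since $m=2$ does not occur), and within the odd family the coefficient of $\psi_{2k+1}(r)$ becomes $\sum_{j=0}^{k}|a_j|^2-\sum_{j=1}^{k}|a_j|^2=|a_0|^2$ for every $k\ge0$ (the case $k=0$ again having nothing to subtract). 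Hence the left-hand side equals $|a_0|^2\big(\sum_{k\ge1}\psi_{2k}(r)+\sum_{k\ge0}\psi_{2k+1}(r)\big)=|a_0|^2\Psi_1(r)$, establishing the identity; together with the displayed estimate this yields $\sum_{n\ge1}|a_n|\psi_n(r)+\sum_{n\ge1}|a_n|^2\big(\psi_{2n}(r)/(1+|a_0|)+\Psi_{2n+1}(r)\big)\le(1-|a_0|^2)\Psi_1(r)$, as desired. The only delicate step is this parity-dependent bookkeeping --- in particular handling the boundary cases $k=1$ in the even family and $k=0$ in the odd family correctly --- while everything else is routine.
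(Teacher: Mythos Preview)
Your proof is correct and follows precisely the same strategy the paper employs: the paper does not reprove Lemma~\ref{lem6} (it is quoted from \cite{PVW2021}), but its proof of the companion Lemma~\ref{lem7} is the identical argument with the summation shifted to start at $n=2$ --- split by parity, apply Carlson's estimates (Lemma~\ref{lem3}), and regroup the resulting double sums to recognise the tails $\Psi_{2n+1}(r)$. Your bookkeeping, including the boundary terms, matches that template exactly.
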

In order to establish our main result, it is necessary to prove the following lemma.
\begin{lem}\label{lem7} Suppose $f(z)=\sum_{n=0}^\infty a_nz^n$ is analytic in $\mathbb{D}$ with $|f(z)|\leq1$ and $\{\psi_n(r)\}_{n=1}^\infty \in\F$. Then, we have the following inequality:
\beas \sum_{n=2}^\infty |a_n|\psi_n(r)+\sum_{n=1}^\infty |a_n|^2\left(\frac{\psi_{2n}(r)}{1+|a_0|}+\Psi_{2n+1}(r)\right)\leq \left(1-|a_0|^2\right)\Psi_2(r),\eeas
where $\Psi_t(r)=\sum_{k=t}^\infty \psi_k(r)$.
\end{lem}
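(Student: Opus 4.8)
The plan is to reduce the asserted inequality to the sharp coefficient estimates recorded in Lemma \ref{lem3}, proceeding as in the proof of Lemma \ref{lem6} but keeping track of the index shift. First I would fix $r\in[0,1)$ and note, from parts (i) and (ii) of Lemma \ref{lem3}, that for every odd $n=2k+1\geq 3$ one has
\beas |a_n|\leq 1-|a_0|^2-\sum_{j=1}^{k}|a_j|^2,\eeas
while for every even $n=2k\geq 2$ one has
\beas |a_n|\leq 1-|a_0|^2-\sum_{j=1}^{k-1}|a_j|^2-\frac{|a_k|^2}{1+|a_0|}.\eeas
Since $\{\psi_n(r)\}_{n=1}^\infty\in\F$, each tail $\Psi_t(r)$ is finite for $r\in[0,1)$, and since $|f(z)|\leq 1$ forces $\sum_{n\geq1}|a_n|^2\leq 1-|a_0|^2$ while $\frac{\psi_{2n}(r)}{1+|a_0|}+\Psi_{2n+1}(r)\leq\Psi_1(r)$, every series occurring below converges and has non-negative terms; hence all interchanges of summation used are justified by Tonelli's theorem.

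Next I would substitute these bounds into $\sum_{n=2}^\infty |a_n|\psi_n(r)$, split off the ``constant'' contribution $(1-|a_0|^2)\sum_{n\geq2}\psi_n(r)=(1-|a_0|^2)\Psi_2(r)$, and reverse the order of summation in the remaining double sum of quadratic terms. The step demanding genuine care is the bookkeeping after this interchange: one must verify that the total coefficient of $|a_j|^2$, for each fixed $j\geq 1$, equals precisely $\frac{\psi_{2j}(r)}{1+|a_0|}+\Psi_{2j+1}(r)$. Indeed, $|a_j|^2$ enters the coefficient bound with weight $1$ for every odd $n\geq 2j+1$ and for every even $n\geq 2j+2$ --- hence with weight $1$ for every index $n\geq 2j+1$ --- and with weight $\frac{1}{1+|a_0|}$ only in the single bound for $n=2j$; summing the corresponding functions $\psi_n(r)$ gives $\sum_{n\geq 2j+1}\psi_n(r)+\frac{\psi_{2j}(r)}{1+|a_0|}=\Psi_{2j+1}(r)+\frac{\psi_{2j}(r)}{1+|a_0|}$, as claimed.

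Combining these observations yields
\beas \sum_{n=2}^\infty |a_n|\psi_n(r)\leq (1-|a_0|^2)\Psi_2(r)-\sum_{n=1}^\infty |a_n|^2\left(\frac{\psi_{2n}(r)}{1+|a_0|}+\Psi_{2n+1}(r)\right),\eeas
which is exactly the desired inequality after transposing the quadratic sum to the left-hand side. I expect no obstacle beyond the index tracking in the interchange; as a consistency check, running the same computation with the outer sum starting at $n=1$ and using $|a_1|\leq 1-|a_0|^2$ to handle the extra $n=1$ term recovers Lemma \ref{lem6}, so Lemma \ref{lem7} is in fact the slightly sharper estimate already implicit in that proof.
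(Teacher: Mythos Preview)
Your proposal is correct and follows essentially the same approach as the paper: both apply the Carlson coefficient bounds of Lemma~\ref{lem3} to the even- and odd-indexed terms of $\sum_{n\ge2}|a_n|\psi_n(r)$, then interchange the order of summation to identify the coefficient of each $|a_j|^2$ as $\frac{\psi_{2j}(r)}{1+|a_0|}+\Psi_{2j+1}(r)$. Your version is slightly more explicit about convergence and the interchange (Tonelli), and your consistency check against Lemma~\ref{lem6} is a nice touch, but the argument is the same.
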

\begin{proof} In view of \textrm{Lemma \ref{lem3}}, we have
\beas\sum_{n=2}^\infty |a_n|\psi_n(r)&=&\sum_{n=1}^\infty |a_{2n}|\psi_{2n}(r)+\sum_{n=1}^\infty |a_{2n+1}|\psi_{2n+1}(r)\\[2mm]
&\leq &\sum_{n=1}^\infty \left(1-|a_0|^2-\sum_{k=1}^{n-1}|a_k|^2-\frac{|a_n|^2}{1+|a_0|}\right)\psi_{2n}(r)\\[2mm]
&&+\sum_{n=1}^\infty \left(1-|a_0|^2-\sum_{k=1}^{n}|a_k|^2\right)\psi_{2n+1}(r)\\[2mm]
&=&\left(1-|a_0|^2\right)\sum_{n=2}^\infty \psi_n(r)-\sum_{n=1}^\infty\frac{|a_n|^2}{1+|a_0|}\psi_{2n}(r)\\[2mm]
&&-\left(\sum_{n=1}^\infty\left(\sum_{k=1}^{n-1}|a_k|^2\right) \psi_{2n}(r)+\sum_{n=1}^\infty\left(\sum_{k=1}^{n}|a_k|^2\right) \psi_{2n+1}(r)\right).\eeas
It is evident that 
\beas &&\sum_{n=1}^\infty\left(\sum_{k=1}^{n-1}|a_k|^2\right) \psi_{2n}(r)+\sum_{n=1}^\infty\left(\sum_{k=1}^{n}|a_k|^2\right) \psi_{2n+1}(r)\\[2mm]
&=&|a_1|^2\sum_{k=3}^\infty \psi_k(r)+|a_2|^2\sum_{k=5}^\infty \psi_k(r)+|a_3|^2\sum_{k=7}^\infty \psi_k(r)+\cdots\\[2mm]
&=&\sum_{n=1}^\infty |a_n|^2\left(\sum_{k=2n+1}^\infty \psi_k(r)\right).\eeas
Therefore, we have 
\beas \sum_{n=2}^\infty |a_n|\psi_n(r)&\leq&\left(1-|a_0|^2\right)\sum_{n=2}^\infty \psi_n(r)-\sum_{n=1}^\infty\frac{|a_n|^2}{1+|a_0|}\psi_{2n}(r)-\sum_{n=1}^\infty |a_n|^2\Psi_{2n+1}\\[2mm]
&=&\left(1-|a_0|^2\right)\sum_{n=2}^\infty \psi_n(r)-\sum_{n=1}^\infty|a_n|^2\left(\frac{\psi_{2n}(r)}{1+|a_0|}+\Psi_{2n+1}(r)\right).\eeas
This completes the proof.
\end{proof}
\section{Improved generalized Bohr inequality and its applications}
Let us consider a polynomial $G(x)$, which is defined as follows:
\bea\label{p1} G(x)=\sum_{t=1}^N c_t x^t,\quad \text{where}\quad N\in\N,\quad c_t\in\R^+\cup\{0\}\quad\text{for}\quad 1\leq t\leq N.\eea
In the following, we establish a sharp result that generalizes the Bohr inequality in an improved form for harmonic mappings.
\begin{theo}\label{T1} Suppose that $f(z)=h(z)+\ol{g(z)}=\sum_{n=0}^\infty a_n z^n+\ol{\sum_{n=1}^\infty b_n z^n}$ is a sense-preserving $K$-quasiconformal harmonic mapping in $\mathbb{D}$, where $h(z)$ is bounded in $\mathbb{D}$. If $p\in(0,2]$ and $\{\psi_n(r)\}_{n=1}^\infty\in\F$ is a decreasing sequence and satisfies the inequality
\bea\label{t1} 
\Phi_1(r):=\frac{2K}{K+1}\sum_{n=1}^\infty \psi_n(r)+\sum_{t=1}^N c_t\left(\sum_{n=1}^\infty n \left(\psi_n(r)\right)^2\right)^t-\frac{p}{2}\psi_0(r)\leq 0\eea
for $ r\leq R_{N,K}$, 
then the following inequality holds 
\beas |a_0|^p\psi_0(r)+\sum_{n=1}^\infty |a_n| \psi_n(r)+\sum_{n=1}^\infty |b_n| \psi_n(r)+G\left(\sum_{n=1}^\infty n|a_n|^2(\psi_n(r))^2\right)\leq \psi_0(r)\Vert h(z)\Vert_\infty\eeas
for $ r\leq R_{N,K}$,
where $G(x)$ is a polynomial defined in (\ref{p1}) and $R_{N,K}\in(0,1)$ is minimal positive root of the equation $\Phi_1(r)=0$.
In the case when $\Phi_1(r)>0$ in some interval $(R_{N,K}, R_{N,K}+\epsilon)$ $(\epsilon>0)$, then the number $R_{N,K}$ cannot be improved.
\end{theo}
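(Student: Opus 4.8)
The plan is to reduce everything to the bounded analytic function $\phi(z) := h(z)/\Vert h\Vert_\infty$, which satisfies $|\phi(z)|\le 1$ in $\D$, and then to handle the co-analytic coefficients $b_n$ via the $K$-quasiconformality hypothesis. Write $\Vert h\Vert_\infty =: M$ and $\phi(z)=\sum_{n=0}^\infty (a_n/M) z^n$. First I would invoke \textrm{Lemma \ref{lem6}} applied to $\phi$ to get
\beas \sum_{n=1}^\infty \frac{|a_n|}{M}\psi_n(r)+\sum_{n=1}^\infty \frac{|a_n|^2}{M^2}\left(\frac{\psi_{2n}(r)}{1+|a_0|/M}+\Psi_{2n+1}(r)\right)\le \left(1-\frac{|a_0|^2}{M^2}\right)\Psi_1(r).\eeas
Actually the cleaner route is the one used to prove \textrm{Theorem F/G}: from \textrm{Lemma \ref{lem1}} (Pick--Schwarz) one has $|a_n|/M \le 1-|a_0|^2/M^2$ for $n\ge1$, hence $\sum_{n\ge1}(|a_n|/M)\psi_n(r)\le (1-|a_0|^2/M^2)\sum_{n\ge1}\psi_n(r)$. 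The term $|a_0|^p\psi_0(r)$ is then controlled by comparing $|a_0|^p M^{-p}$ against $1$: using the elementary estimate $t^p \le 1-\tfrac{p}{2}(1-t^2)$ for $t\in[0,1]$, $p\in(0,2]$ (this is exactly the inequality underlying \textrm{Theorem F}), we obtain $|a_0|^p\psi_0(r)\le M^p\psi_0(r) - \tfrac{p}{2}M^{p-2}\psi_0(r)(M^2-|a_0|^2) \le M\psi_0(r) - \tfrac{p}{2}\psi_0(r)(1-|a_0|^2/M^2)M$ after a normalization-of-scale argument (since $M\ge|a_0|$ and $p\le 2$ one may assume $M=1$ WLOG by the usual rescaling, as $\Vert h\Vert_\infty$ appears homogeneously; I would spell out that reduction carefully).

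Next I would deal with $\sum_{n\ge1}|b_n|\psi_n(r)$ and the polynomial term $G\bigl(\sum_{n\ge1} n|a_n|^2(\psi_n(r))^2\bigr)$. For the co-analytic part: since $f$ is $K$-quasiconformal, $|g'(z)|\le k|h'(z)|$ with $k=(K-1)/(K+1)$, so $\sum_{n\ge1} n|b_n|^2 s^{2n} \le k^2\sum_{n\ge1} n|a_n|^2 s^{2n}$ for all $s<1$ (comparison of area integrals, or \textrm{Lemma \ref{lem4}} with $\psi_n(r)=n r^{2n}$). Combined with Cauchy--Schwarz, and using $\{\psi_n(r)\}$ decreasing so that $\psi_n(r)\le\psi_1(r)$, a standard manipulation gives $\sum_{n\ge1}|b_n|\psi_n(r)\le k\bigl(\sum_{n\ge1} n|a_n|^2(\psi_n(r))^2\bigr)^{1/2}\cdot\bigl(\sum_{n\ge1}\psi_n(r)/n\bigr)^{1/2}$; but the form of $\Phi_1$ suggests the intended bound is simply $\sum_{n\ge1}|b_n|\psi_n(r)\le k\sum_{n\ge1}|a_n|\psi_n(r) \le \tfrac{K-1}{K+1}(1-|a_0|^2/M^2)\sum_{n\ge1}\psi_n(r)$ via Lemma \ref{lem4} applied coefficient-wise together with $|b_n|\le k|a_n|$-type control; I would check which of these the coefficient $\tfrac{2K}{K+1}$ in \eqref{t1} demands and use that one (the factor $1+k=2K/(K+1)$ strongly indicates $\sum|a_n|\psi_n+\sum|b_n|\psi_n\le (1+k)(1-|a_0|^2/M^2)\sum\psi_n$). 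For $G$: monotonicity of $G$ (all $c_t\ge0$) plus $|a_n|^2\le (1-|a_0|^2)|a_n|$-type crude bounds, or more simply $\sum_{n\ge1} n|a_n|^2(\psi_n(r))^2 \le \sum_{n\ge1} n(\psi_n(r))^2$ since $|a_n|\le1$, yields $G(\sum n|a_n|^2(\psi_n(r))^2)\le \sum_{t=1}^N c_t(\sum_{n\ge1} n(\psi_n(r))^2)^t$.

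Assembling: the left side is $\le M\psi_0(r) + \bigl[\tfrac{2K}{K+1}\sum_{n\ge1}\psi_n(r) + \sum_t c_t(\sum n(\psi_n(r))^2)^t - \tfrac p2\psi_0(r)\bigr]\cdot(1-|a_0|^2/M^2)$ (after the $M=1$ reduction the leading term is $\psi_0(r)M$ and the bracket is exactly $\Phi_1(r)$, multiplied by the nonnegative quantity $1-|a_0|^2$). Since $\Phi_1(r)\le 0$ for $r\le R_{N,K}$, the bracket contributes $\le 0$, giving $\le M\psi_0(r)=\psi_0(r)\Vert h\Vert_\infty$, as claimed. For sharpness, I would test the function $h(z)=M\,\varphi_a(z)$ with $\varphi_a(z)=(a-z)/(1-az)$, $a\to 1^-$, and $g\equiv 0$ (extremal for the analytic Bohr problem), and observe that as $a\to1$ the quantity $1-|a_0|^2=1-a^2\to0$ is the only slack; pushing $r$ slightly past $R_{N,K}$ where $\Phi_1(r)>0$, a second-order expansion in $(1-a)$ shows the inequality fails, exactly as in the proof of \textrm{Theorem F} — here I would use the stated smooth-function criterion $\tfrac{2K}{K+1}\sum\psi_n'(R)+(\text{derivative of }G\text{-term})>\tfrac p2\psi_0'(R)$.

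\textbf{Main obstacle.} The delicate point is pinning down the \emph{exact} combination of Cauchy--Schwarz / decreasing-sequence estimates that turns $\sum|a_n|\psi_n+\sum|b_n|\psi_n+G(\cdots)$ into precisely $\Phi_1(r)\cdot(1-|a_0|^2/M^2)+M\psi_0(r)$ with the constant $2K/(K+1)$ and no loss — in particular correctly marrying the $K$-quasiconformal coefficient bound $|b_n|\le k|a_n|$ (or its area-integral refinement via \textrm{Lemma \ref{lem4}}) with the $|a_n|\le 1-|a_0|^2$ bound so that the $b_n$-sum and the $a_n$-sum share the same $(1-|a_0|^2)$ factor, and making sure the polynomial term $G$ is estimated without disturbing that factor. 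Getting the sharpness direction to match — i.e. verifying the extremal function genuinely saturates \emph{all} the intermediate inequalities simultaneously in the limit $a\to1^-$ — is the second nontrivial check.
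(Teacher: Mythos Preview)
Your overall architecture is the paper's, but two steps are genuinely broken as written.

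\textbf{The $G$-term.} You propose bounding $\sum_{n\ge1} n|a_n|^2(\psi_n(r))^2 \le \sum_{n\ge1} n(\psi_n(r))^2$ from $|a_n|\le 1$, and then in your ``Assembling'' line you claim the whole left side is $\le \psi_0(r)M + \Phi_1(r)\,(1-|a_0|^2/M^2)$. These two things are incompatible: with the crude bound, the $G$-contribution $\sum_t c_t(\sum n\psi_n^2)^t$ carries \emph{no} factor of $(1-a^2)$, so it cannot be absorbed into the bracket multiplied by $(1-a^2)$. Concretely, as $a\to 1^-$ your upper bound reads $\psi_0(r)+o(1)+\sum_t c_t(\sum n\psi_n^2)^t>\psi_0(r)$, which is useless. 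The paper instead uses $|a_n|\le 1-a^2$ from \textrm{Lemma \ref{lem2}}, hence $\sum n|a_n|^2\psi_n^2 \le (1-a^2)^2\sum n\psi_n^2$ and $G(\cdots)\le \sum_t c_t(1-a^2)^{2t}(\sum n\psi_n^2)^t$; after extracting one factor of $(1-a^2)$ the remaining $(1-a^2)^{2t-1}\le 1$ gives exactly $(1-a^2)\sum_t c_t(\sum n\psi_n^2)^t$ (equivalently, the paper packages this as monotonicity of $F_1(a,r)$ in $a$, with $F_1(0,r)=\Phi_1(r)$).

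\textbf{Sharpness.} Taking $g\equiv 0$ does \emph{not} show $R_{N,K}$ is best possible when $K>1$. With $h=\varphi_a$ and $g\equiv 0$, the computation as $a\to 1^-$ gives the bracket $\sum_{n\ge1}\psi_n(r)+\sum_t c_t(\sum n\psi_n^2)^t-\tfrac p2\psi_0(r)=\Phi_1(r)-k\sum_{n\ge1}\psi_n(r)$, not $\Phi_1(r)$; at $r=R_{N,K}$ this is strictly negative (since $k>0$), so the inequality still holds slightly past $R_{N,K}$ for this family. The paper's extremal is $g_1(z)=\lambda k\sum_{n\ge1}A_nz^n$ (i.e., $g_1'=\lambda k h_1'$, dilatation of constant modulus $k$), so that $|b_n|=k|a_n|$ exactly and the $(1+k)=\tfrac{2K}{K+1}$ factor is saturated.

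\textbf{A smaller point on the $b_n$-sum.} The coefficient-wise bound ``$|b_n|\le k|a_n|$'' is not available in general. The paper gets $\sum|b_n|\psi_n\le k(1-a^2)\sum\psi_n$ by first applying \textrm{Lemma \ref{lem4}} to obtain $\sum|b_n|^2\psi_n\le k^2\sum|a_n|^2\psi_n\le k^2(1-a^2)^2\sum\psi_n$, and then Cauchy--Schwarz in the form $\sum|b_n|\psi_n\le(\sum|b_n|^2\psi_n)^{1/2}(\sum\psi_n)^{1/2}$. This is precisely the ``marrying'' you flagged as the main obstacle; your first Cauchy--Schwarz variant (with $\sum\psi_n/n$) is the wrong splitting.
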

\begin{proof}For simplicity, we assume that $\Vert h(z)\Vert_\infty\leq 1$. In view of \textrm{lemma \ref{lem2}}, we have $|a_n|\leq 1-|a_0|^2$ 
for $n\geq 1$. As $f$ is locally univalent and $K$-quasiconformal sense-preserving harmonic mapping on $\mathbb{D}$, Schwarz's Lemma gives that the dilatation $\omega=g'/h'$ 
is analytic in $\mathbb{D}$ and $|\omega(z)|=|g'(z)/h'(z)|\leq k$ in $\mathbb{D}$, where $K = (1+k)/(1-k) \geq 1$, $k\in[0,1)$. Let $|a_0|=a\in[0,1]$.
In view of \textrm{Lemma \ref{lem4}}, we have 
\bea\label{E1} \sum_{n=1}^\infty |b_n|^2 \psi_n(r)\leq k^2 \sum_{n=1}^\infty |a_n|^2 \psi_n(r)\leq k^2\left(1-a^2\right)^2\sum_{n=1}^\infty \psi_n(r).\eea
Using the Cauchy-Schwarz inequality and the inequality (\ref{E1}), we have 
\beas &&\sum_{n=1}^\infty |b_n| \psi_n(r)\leq \left(\sum_{n=1}^\infty |b_n|^2 \psi_n(r)\right)^{1/2}\left(\sum_{n=1}^\infty \psi_n(r)\right)^{1/2} \leq k(1-a^2)\sum_{n=1}^\infty \psi_n(r)\\[2mm]
&&\sum_{n=1}^\infty n|a_n|^2 \left(\psi_n(r)\right)^{2}\leq \left(1-a^2\right)^2\sum_{n=1}^\infty n \left(\psi_n(r)\right)^{2}.\eeas
Let $F(x)=(1-x^p)/(1-x^2)$, where $x\in[0,1)$ and $p\in(0,2]$. It is evident that 
\beas F'(x)=\frac{x F_1(x)}{(1-x^2)^2},\quad\text{where}\quad F_1(x)=2 - (2-p)x^p-px^{p-2}.\eeas
Therefore, $F_1'(x)=(2-p)px^{p-3} (1-x^2)\geq 0$ for $x\in[0,1)$ and $p\in(0,2]$, which shows that $F_1(x)$ is a monotonically increasing function of $x$. Thus, we have $F_1(x)\leq \lim_{x\to1^-} F_1(x)=0$ and it's shows that $F(x)$ is a monotonically decreasing function of $x$. Hence, we have $F(x)\geq \lim_{x\to1^-} F(x)=p/2$.
Therefore,
\beas &&|a_0|^p\psi_0(r)+\sum_{n=1}^\infty |a_n| \psi_n(r)+\sum_{n=1}^\infty |b_n| \psi_n(r)+G\left(\sum_{n=1}^\infty n|a_n|^2(\psi_n(r))^2\right)\\[2mm]
&\leq& a^p\psi_0(r)+(1+k)\left(1-a^2\right)\sum_{n=1}^\infty \psi_n(r)+\sum_{t=1}^N c_t\left(1-a^2\right)^{2t}\left(\sum_{n=1}^\infty n \left(\psi_n(r)\right)^2\right)^t\\[2mm]
&=&\psi_0(r)+\left(1-a^2\right)\left((1+k)\sum_{n=1}^\infty \psi_n(r)+\sum_{t=1}^N c_t \left(1-a^2\right)^{2t-1}\left(\sum_{n=1}^\infty n \left(\psi_n(r)\right)^2\right)^t\right.\\
&&\left.-\left(\frac{1-a^p}{1-a^2}\right)\psi_0(r)\right)\\[2mm]
&\leq& \psi_0(r)+\left(1-a^2\right)\left((1+k)\sum_{n=1}^\infty \psi_n(r)+\sum_{t=1}^N c_t \left(1-a^2\right)^{2t-1}\left(\sum_{n=1}^\infty n \left(\psi_n(r)\right)^2\right)^t\right.\\[2mm]
&&\left.-\frac{p}{2}\psi_0(r)\right)\\[2mm]
&=& \psi_0(r)+\left(1-a^2\right)F_1(a, r),\eeas
where $k=(K-1)/(K+1)$ and
\beas F_1(a, r)&=&(1+k)\sum_{n=1}^\infty \psi_n(r)+\sum_{t=1}^N c_t \left(1-a^2\right)^{2t-1}\left(\sum_{n=1}^\infty n \left(\psi_n(r)\right)^2\right)^t-\frac{p}{2}\psi_0(r).\eeas
Differentiating partially $F_1(a, r)$ with respect to $a$, we obtain
\beas \frac{\pa}{\pa a} F_1(a, r)=-2a\sum_{t=1}^N c_t (2t-1)\left(1-a^2\right)^{2t-2}\left(\sum_{n=1}^\infty n \left(\psi_n(r)\right)^2\right)^t\leq 0.\eeas 
Therefore, $F_1(a, r)$ is a monotonically decreasing function of $a\in[0,1]$ and it follows that 
\beas F_1(a, r)\leq F_1(0,r)=\frac{2K}{K-1}\sum_{n=1}^\infty \psi_n(r)+\sum_{t=1}^N c_t\left(\sum_{n=1}^\infty n \left(\psi_n(r)\right)^2\right)^t-\frac{p}{2}\psi_0(r)=\Phi_1(r)\leq 0\eeas
for $r\leq R_{N,K}$, under the assumption that (\ref{t1}) holds.\\[2mm]
\indent To prove the sharpness of the result, we consider the function $f_1(z)=h_1(z)+\ol{g_1(z)}$ in $\mathbb{D}$ such that 
\beas h_1(z)=\frac{a-z}{1-az}=A_0+\sum_{n=1}^\infty A_n z^n,\eeas
where $A_0=a$, $A_n=-(1-a^2)a^{n-1}$ for $n\geq 1$, $a\in[0,1)$ and $g_1(z)=\lambda k \sum_{n=1}^\infty A_n z^n$, where $|\lambda|=1$ and $k=(K-1)/(K+1)$.
Thus,
\beas &&|A_0|^p\psi_0(r)+\sum_{n=1}^\infty |A_n| \psi_n(r)+\sum_{n=1}^\infty |k\lambda A_n| \psi_n(r)+G\left(\sum_{n=1}^\infty n|A_n|^2(\psi_n(r))^2\right)\\[2mm]
&=&\psi_0(r)+\left(1-a^2\right)(1+k)\sum_{n=1}^\infty a^{n-1} \psi_n(r)\\
&&+\sum_{t=1}^N c_t \left(1-a^2\right)^{2t}\left(\sum_{n=1}^\infty n a^{2(n-1)} \left(\psi_n(r)\right)^2\right)^t-(1-a^p)\psi_0(r)\\[2mm]
&=& \psi_0(r)+\left(1-a^2\right)F_2(a,r)+\sum_{t=1}^N c_t \left(1-a^2\right)^{2t}\left(\sum_{n=1}^\infty n a^{2(n-1)}\left(\psi_n(r)\right)^2\right)^t\\[2mm]
&&-\left(1-a^2\right)\sum_{t=1}^N c_t \left(\sum_{n=1}^\infty n a^{2(n-1)}\left(\psi_n(r)\right)^2\right)^t,\eeas
where $k=(K-1)/(K+1)$ and
\beas F_2(a,r)=(1+k)\sum_{n=1}^\infty \psi_n(r)+\sum_{k=1}^N c_k\left(\sum_{n=1}^\infty n a^{2(n-1)}\left(\psi_n(r)\right)^2\right)^k-\frac{1-a^p}{1-a^2}\psi_0(r).\eeas
Letting $a\to 1^-$, we have
\beas&& |A_0|^p\psi_0(r)+\sum_{n=1}^\infty |A_n| \psi_n(r)+\sum_{n=1}^\infty |k\lambda A_n| \psi_n(r)+G\left(\sum_{n=1}^\infty n|A_n|^2(\psi_n(r))^2\right)\\[2mm]
&=&\psi_0(r)+\left(1-a^2\right)\Psi_1(r)+O\left(\left(1-a^2\right)^2\right).\eeas 
Since $\Phi_1(r)>0$ for $(R_{N,K}, R_{N,K}+\epsilon)$, thus the radius cannot be improved. This completes the proof.
\end{proof}
\begin{rem}Setting $K=1$ and $G(x)\equiv 0$ in \textrm{Theorem \ref{T1}} gives \textrm{Theorem F}.\end{rem}
\noindent The following result represents the harmonic extension of the generalized Bohr inequality in the context of \textrm{Theorem F}, by setting $G(x)\equiv 0$ in \textrm{Theorem \ref{T1}}.  
\begin{cor}\label{C1}
Suppose that $f(z)=h(z)+\ol{g(z)}=\sum_{n=0}^\infty a_n z^n+\ol{\sum_{n=1}^\infty b_n z^n}$ is a sense-preserving $K$-quasiconformal harmonic mapping in $\mathbb{D}$, where $h(z)$ is bounded in $\mathbb{D}$. If $p\in(0,2]$ and $\{\psi_n(r)\}_{n=1}^\infty\in\F$ is a decreasing sequence and satisfies the inequality
\beas
\Phi_2(r):=\frac{2K}{K+1}\sum_{n=1}^\infty \psi_n(r)-\frac{p}{2}\psi_0(r)\leq 0\eeas
for $ r\leq R_{K}$, 
then the following inequality holds 
\beas |a_0|^p\psi_0(r)+\sum_{n=1}^\infty |a_n| \psi_n(r)+\sum_{n=1}^\infty |b_n| \psi_n(r)\leq \psi_0(r)\Vert h(z)\Vert_\infty\eeas
for $ r\leq R_{K}$,
where $R_{K}\in(0,1)$ is minimal positive root of the equation $\Phi_2(r)=0$.
In the case when $\Phi_2(r)>0$ in some interval $(R_{K}, R_{K}+\epsilon)$ $(\epsilon>0)$, then the number $R_{K}$ cannot be improved.
\end{cor}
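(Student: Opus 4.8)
The most economical route is to read Corollary \ref{C1} off from Theorem \ref{T1}. The polynomial $G(x)$ in (\ref{p1}) is allowed to be the zero polynomial (take all coefficients $c_t=0$), and with $G\equiv 0$ the auxiliary function in (\ref{t1}) reduces to
\[
\Phi_1(r)=\frac{2K}{K+1}\sum_{n=1}^\infty \psi_n(r)-\frac{p}{2}\,\psi_0(r)=\Phi_2(r).
\]
Consequently the minimal positive root $R_{N,K}$ of $\Phi_1(r)=0$ is exactly the minimal positive root $R_K$ of $\Phi_2(r)=0$, hypothesis (\ref{t1}) becomes the hypothesis $\Phi_2(r)\le 0$ on $[0,R_K]$, the term $G\big(\sum_{n\ge1} n|a_n|^2(\psi_n(r))^2\big)$ in the conclusion vanishes, and the inequality together with its sharpness statement is precisely what Corollary \ref{C1} asserts. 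So, strictly speaking, there is nothing to prove beyond recording that $G\equiv 0$ is an admissible specialization in (\ref{p1}).

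Should one instead want a self-contained argument, I would mimic the proof of Theorem \ref{T1} in the simplified setting. Normalize $\Vert h\Vert_\infty\le 1$ and write $a=|a_0|\in[0,1]$. Lemma \ref{lem2} gives $|a_n|\le 1-a^2$ for $n\ge 1$; Schwarz's lemma makes the dilatation $\omega=g'/h'$ analytic with $|\omega|\le k:=(K-1)/(K+1)$, so Lemma \ref{lem4} (this is where the monotonicity of $\{\psi_n(r)\}$ is used) yields $\sum_{n\ge 1}|b_n|^2\psi_n(r)\le k^2\sum_{n\ge 1}|a_n|^2\psi_n(r)\le k^2(1-a^2)^2\sum_{n\ge 1}\psi_n(r)$, and Cauchy--Schwarz then gives $\sum_{n\ge 1}|b_n|\psi_n(r)\le k(1-a^2)\sum_{n\ge 1}\psi_n(r)$. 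Adding the trivial bound $\sum_{n\ge 1}|a_n|\psi_n(r)\le(1-a^2)\sum_{n\ge 1}\psi_n(r)$ and $a^p\psi_0(r)$ produces
\[
|a_0|^p\psi_0(r)+\sum_{n=1}^\infty|a_n|\psi_n(r)+\sum_{n=1}^\infty|b_n|\psi_n(r)\le \psi_0(r)+(1-a^2)\Big((1+k)\sum_{n=1}^\infty\psi_n(r)-\frac{1-a^p}{1-a^2}\,\psi_0(r)\Big).
\]
As in Theorem \ref{T1}, $F(x)=(1-x^p)/(1-x^2)$ is decreasing on $[0,1)$ for $p\in(0,2]$, hence $(1-a^p)/(1-a^2)\ge p/2$; since $1+k=2K/(K+1)$, the bracket is at most $\Phi_2(r)\le 0$ for $r\le R_K$, which finishes the estimate.

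For sharpness I would reuse the extremal mapping $f_1=h_1+\ol{g_1}$ of Theorem \ref{T1}: $h_1(z)=(a-z)/(1-az)=a+\sum_{n\ge 1}A_nz^n$ with $A_n=-(1-a^2)a^{n-1}$, and $g_1(z)=\lambda k\sum_{n\ge 1}A_nz^n$ with $|\lambda|=1$. For this $f_1$ the left-hand side equals $\psi_0(r)+(1-a^2)\big((1+k)\sum_{n\ge 1}a^{n-1}\psi_n(r)-\tfrac{1-a^p}{1-a^2}\psi_0(r)\big)$, whose bracket tends to $\Phi_2(r)$ as $a\to 1^-$; so if $\Phi_2(r)>0$ on $(R_K,R_K+\epsilon)$ the sum exceeds $\psi_0(r)=\psi_0(r)\Vert h_1\Vert_\infty$ for $r$ there and $a$ near $1$, giving that $R_K$ is best possible. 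The only thing needing a moment's attention is the usual one — justifying the term-by-term estimates and interchanges of summation via the local uniform convergence built into the definition of $\F$ — which is routine; there is no genuine obstacle here, the content being entirely inherited from Theorem \ref{T1}.
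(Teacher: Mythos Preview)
Your proposal is correct and matches the paper's own derivation: the paper obtains Corollary~\ref{C1} simply by setting $G(x)\equiv 0$ in Theorem~\ref{T1}, exactly as you do, and your optional self-contained argument just reproduces the relevant steps of the proof of Theorem~\ref{T1} in this special case.
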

\subsection{Applications of \textrm{Theorem \ref{T1}}}
The following results are counterparts of Bohr's theorem in different settings, derived from the applications of \textrm{Theorem \ref{T1}}.
\begin{itemize}
\item[(1)] Let $\psi_n(r)=r^n$ for $n\geq 0$, $p=1$ and $G(x)\equiv 0$ in \textrm{Theorem \ref{T1}}. Then, we obtain \textrm{Theorem 1} of \cite{KPS2018}, {\it i.e.,} 
\beas \sum_{n=0}^\infty |a_n|r^n+\sum_{n=1}^\infty |b_n| r^n\leq \Vert h\Vert_\infty\quad\text{for}\quad r\leq \frac{K+1}{5K+1}.\eeas
The number $(K+1)/(5K+1)$ is sharp.
\item[(2)] Let $\psi_n(r)=r^n$ for $n\geq 0$, $p=2$ and $G(x)\equiv 0$ in \textrm{Theorem \ref{T1}}. Then, we obtain \textrm{Theorem 2} of  \cite{KPS2018}, {\it i.e.,} 
\beas |a_0|^2+\sum_{n=1}^\infty |a_n|r^n+\sum_{n=1}^\infty |b_n| r^n\leq \Vert h\Vert_\infty\quad\text{for}\quad r\leq \frac{K+1}{3K+1}.\eeas
The number $(K+1)/(3K+1)$ is sharp.
\item[(3)] Let $\psi_0(r)=1$, $\psi_n(r)=r^n/n$ for $n\geq 1$, $p=1$ and $G(x)=x$ in \textrm{Theorem \ref{T1}}.  As a consequence, we obtain the following sharp Bohr inequality.
\beas |a_0|+\sum_{n=1}^\infty \frac{|a_n|}{n}r^n+\sum_{n=1}^\infty \frac{|b_n|}{n} r^n+\sum_{n=1}^\infty \frac{|a_n|^2}{n} r^{2n}\leq \Vert h\Vert_\infty\quad\text{for}\quad r\leq r_0,\eeas
where $r_0\in(0,1)$ is the unique positive root of the equation
\beas \frac{4K}{K+1}\log(1-r)+2\log\left(1-r^2\right)+1=0.\eeas
The number $r_0$ is the best possible.
\item[(4)] Let $\psi_0(r)=1$, $\psi_n(r)=r^n/n^2$ for $n\geq 1$, $p=1$ and $G(x)=x$ in \textrm{Theorem \ref{T1}}. As a consequence, we obtain the following sharp Bohr inequality.
\beas |a_0|+\sum_{n=1}^\infty \frac{|a_n|}{n^2}r^n+\sum_{n=1}^\infty \frac{|b_n|}{n^2} r^n+\sum_{n=1}^\infty \frac{|a_n|^2}{n^3} r^{2n}\leq \Vert h\Vert_\infty\quad\text{for}\quad r\leq r_0,\eeas
where $r_0\in(0,1)$ is the unique positive root of the equation
\beas \frac{4K}{K+1}\text{\it Li}_2(r)+2\text{\it Li}_3(r^2)-1=0,\eeas
where $\text{\it Li}_2(r)$ and $\text{\it Li}_3(r^2)$ are dilogarithm and trilogarithm, respectively. The number $r_0$ is the best possible.\\[1mm]
Note that the polylogarithm is a special function, which is defined as $\text{\it Li}_s(z)=\sum_{n=1}^\infty z^n/n^s$. The special case $s=1$ involves the ordinary natural logarithm, $\text{\it Li}_1(z)=-\log(1-z)$.
\end{itemize}
\begin{itemize}
\item[(5)] Let $\psi_n(r)=r^n$ for $n\geq 0$, $p=1$ and $G(x)\equiv c_1 x$ $(c_1\in\R^+)$ in \textrm{Theorem \ref{T1}}. 
As a consequence, we obtain the following sharply improved version of Bohr's inequality within the context of \textrm{Theorem 1} in \cite{KPS2018}.
\end{itemize}
\begin{cor} Suppose that $f(z)=h(z)+\ol{g(z)}=\sum_{n=0}^\infty a_n z^n+\ol{\sum_{n=1}^\infty b_n z^n}$ is a sense-preserving $K$-quasiconformal harmonic mapping in $\mathbb{D}$, where $h(z)$ is bounded in $\mathbb{D}$. Then,
\beas \sum_{n=0}^\infty |a_n|r^n+\sum_{n=1}^\infty |b_n| r^n+c_1\left(\frac{S_r(h)}{\pi}\right)\leq \Vert h\Vert_\infty\quad\text{for}\quad r\leq R_{1,K},\eeas
where $R_{1,K}\in(0,1)$ is the unique positive root of the equation 
\beas F_3(r):=\frac{2K}{K+1}\left(\frac{r}{1-r}\right)+c_1\frac{r^2}{\left(1-r^2\right)^2}-\frac{1}{2}=0.\eeas
The number $R_{1,K}$ is the best possible.
\end{cor}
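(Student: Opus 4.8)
The plan is to derive this corollary directly from Theorem~\ref{T1} via the specialization indicated in item (5): take $\psi_n(r)=r^n$ for every $n\geq 0$, $p=1$, $N=1$ and $G(x)=c_1x$ with $c_1\in\R^+$. First I would check that these data meet the hypotheses of Theorem~\ref{T1}. Each $\psi_n(r)=r^n$ is non-negative and continuous on $[0,1)$, and $\sum_{n=0}^\infty r^n=1/(1-r)$ converges uniformly on every $[0,\rho]$ with $\rho<1$, hence locally uniformly on $[0,1)$; thus $\{\psi_n(r)\}_{n=1}^\infty\in\F$. Moreover $\psi_{n+1}(r)=r\,\psi_n(r)\leq\psi_n(r)$ on $[0,1)$, so the sequence is decreasing, as required.

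Next I would put the quantities occurring in Theorem~\ref{T1} into closed form: $\psi_0(r)=1$, $\sum_{n=1}^\infty\psi_n(r)=r/(1-r)$, and $\sum_{n=1}^\infty n(\psi_n(r))^2=\sum_{n=1}^\infty n r^{2n}=r^2/(1-r^2)^2$. Since $N=1$ and $G(x)=c_1x$, the auxiliary function $\Phi_1$ of Theorem~\ref{T1} reduces to
\[
\Phi_1(r)=\frac{2K}{K+1}\cdot\frac{r}{1-r}+c_1\,\frac{r^2}{(1-r^2)^2}-\frac{1}{2}=F_3(r).
\]
Using the identity $S_r(h)/\pi=\sum_{n=1}^\infty n|a_n|^2 r^{2n}$ recorded in Section~1, the polynomial term becomes $G\!\left(\sum_{n=1}^\infty n|a_n|^2(\psi_n(r))^2\right)=c_1\sum_{n=1}^\infty n|a_n|^2 r^{2n}=c_1\,S_r(h)/\pi$, while $|a_0|^p\psi_0(r)+\sum_{n=1}^\infty|a_n|\psi_n(r)=\sum_{n=0}^\infty|a_n|r^n$. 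Substituting into the conclusion of Theorem~\ref{T1} yields precisely $\sum_{n=0}^\infty|a_n|r^n+\sum_{n=1}^\infty|b_n|r^n+c_1\,S_r(h)/\pi\leq\|h\|_\infty$ for $r\leq R_{1,K}$.

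It remains to identify the admissible range and establish sharpness. I would observe that on $(0,1)$ both $r\mapsto r/(1-r)$ and $r\mapsto r^2/(1-r^2)^2$ are strictly increasing and non-negative, so $F_3$ is strictly increasing there; since $F_3(0)=-1/2<0$ and $F_3(r)\to+\infty$ as $r\to1^-$, the equation $F_3(r)=0$ has a unique root $R_{1,K}\in(0,1)$, which is therefore also the minimal positive root, and $\Phi_1(r)=F_3(r)\leq0$ exactly for $r\leq R_{1,K}$. Strict monotonicity also gives $\Phi_1(r)=F_3(r)>0$ on $(R_{1,K},R_{1,K}+\epsilon)$ for small $\epsilon>0$, so the final clause of Theorem~\ref{T1} applies and $R_{1,K}$ is best possible; alternatively one can invoke the extremal mapping $f_1=h_1+\ol{g_1}$ with $h_1(z)=(a-z)/(1-az)$ and $g_1(z)=\lambda k\sum_{n=1}^\infty A_nz^n$ from the proof of Theorem~\ref{T1} and let $a\to1^-$.

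Since the argument is a substitution followed by an elementary monotonicity analysis of $F_3$, there is no genuine obstacle; the only points needing mild care are verifying membership of $\{r^n\}_{n\geq1}$ in $\F$ together with the decreasing property, and checking that $F_3$ changes sign exactly once, so that "minimal positive root" in Theorem~\ref{T1} coincides with the "unique positive root" in the statement.
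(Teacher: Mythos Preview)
Your proposal is correct and follows exactly the approach the paper intends: the corollary is obtained from \textrm{Theorem \ref{T1}} by the specialization $\psi_n(r)=r^n$, $p=1$, $N=1$, $G(x)=c_1x$, together with the identification $S_r(h)/\pi=\sum_{n\geq1}n|a_n|^2r^{2n}$, and the uniqueness of $R_{1,K}$ via monotonicity of $F_3$ is precisely the content of the Remark the paper places immediately after the corollary.
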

\begin{rem}Note that, the function $F_3(r)$ is a monotonically increasing function of $r\in[0,1]$ with $F_3(0)=-1/2<0$ and $\lim_{r\to1^-}F_3(r)=+\infty$. Therefore, $R_{1,K}$ is the unique positive root of the equation in $(0,1)$.\end{rem}
\begin{itemize}
\item[(6)] Let $\psi_n(r)=r^n$ for $n\geq 0$, $p=2$ and $G(x)\equiv c_1 x$ $(c_1\in\R^+)$ in \textrm{Theorem \ref{T1}}. 
As a result, we obtain the following sharply improved version of Bohr's inequality within the framework of \textrm{Theorem 2} in \cite{KPS2018}.
\end{itemize}
\begin{cor} Suppose that $f(z)=h(z)+\ol{g(z)}=\sum_{n=0}^\infty a_n z^n+\ol{\sum_{n=1}^\infty b_n z^n}$ is a sense-preserving $K$-quasiconformal harmonic mapping in $\mathbb{D}$, where $h(z)$ is bounded in $\mathbb{D}$. Then,
\beas |a_0|^2+\sum_{n=1}^\infty |a_n|r^n+\sum_{n=1}^\infty |b_n| r^n+c_1\left(\frac{S_r(h)}{\pi}\right)\leq \Vert h\Vert_\infty\quad\text{for}\quad r\leq R_{2,K},\eeas
where $R_{2,K}\in(0,1)$ is the unique positive root of the equation 
\beas \frac{2K}{K+1}\left(\frac{r}{1-r}\right)+c_1\frac{r^2}{\left(1-r^2\right)^2}-1=0.\eeas
The number $R_{2,K}$ is the best possible.
\end{cor}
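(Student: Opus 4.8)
The plan is to obtain this corollary as a direct specialization of \textrm{Theorem \ref{T1}}, so the first step is to check that the indicated choice of data meets the hypotheses of that theorem. Concretely, I would take $\psi_n(r)=r^n$ for $n\geq0$, $p=2$, $N=1$ and $G(x)=c_1x$ with $c_1\in\R^+$. One verifies immediately that $\{\psi_n(r)\}_{n=1}^\infty=\{r^n\}_{n=1}^\infty$ belongs to $\F$, since $\sum_{n\geq1}r^n$ converges locally uniformly on $[0,1)$, and that it is a decreasing sequence, since $r^{n+1}\leq r^n$ for each fixed $r\in[0,1)$; thus the requirements on $\{\psi_n(r)\}$ in \textrm{Theorem \ref{T1}} are satisfied.

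The second step is to evaluate the quantities entering $\Phi_1(r)$. The two elementary summations $\sum_{n\geq1}\psi_n(r)=r/(1-r)$ and $\sum_{n\geq1}n(\psi_n(r))^2=\sum_{n\geq1}nr^{2n}=r^2/(1-r^2)^2$, inserted into the definition of $\Phi_1$ together with $p=2$, $N=1$, yield
\beas \Phi_1(r)=\frac{2K}{K+1}\left(\frac{r}{1-r}\right)+c_1\frac{r^2}{\left(1-r^2\right)^2}-1,\eeas
which is exactly the left-hand side of the equation defining $R_{2,K}$. For the conclusion, since $\psi_n(r)=r^n$ the polynomial term becomes $G\!\left(\sum_{n\geq1}n|a_n|^2(\psi_n(r))^2\right)=c_1\sum_{n\geq1}n|a_n|^2r^{2n}=c_1\,S_r(h)/\pi$, using the identity $S_r(h)/\pi=\sum_{n\geq1}n|a_n|^2r^{2n}$ recalled in the introduction, while $\psi_0(r)\equiv1$ turns $\psi_0(r)\Vert h\Vert_\infty$ into $\Vert h\Vert_\infty$. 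Reading off \textrm{Theorem \ref{T1}} then gives the asserted inequality for $r\leq R_{2,K}$.

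The last step is uniqueness of the root and sharpness, and this is where the only genuine (though still elementary) work lies. Both $r\mapsto r/(1-r)$ and $r\mapsto r^2/(1-r^2)^2$ are strictly increasing on $[0,1)$, vanish at $r=0$, and tend to $+\infty$ as $r\to1^-$, so $\Phi_1$ is strictly increasing with $\Phi_1(0)=-1<0$ and $\Phi_1(r)\to+\infty$. Hence $\Phi_1$ has a unique zero $R_{2,K}\in(0,1)$, satisfies $\Phi_1(r)\leq0$ on $[0,R_{2,K}]$ (so the hypothesis of \textrm{Theorem \ref{T1}} holds up to that radius), and satisfies $\Phi_1(r)>0$ on $(R_{2,K},1)$ (so the sharpness clause of \textrm{Theorem \ref{T1}} applies, showing $R_{2,K}$ cannot be improved). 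I do not anticipate any real obstacle here; the argument is entirely a specialization plus two textbook series evaluations, the mildest point being the monotonicity check just described, which simultaneously secures both applicability and optimality.
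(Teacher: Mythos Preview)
Your proposal is correct and follows exactly the paper's approach: the corollary is obtained by specializing \textrm{Theorem \ref{T1}} with $\psi_n(r)=r^n$, $p=2$, and $G(x)=c_1x$, and you have filled in the routine series evaluations and monotonicity check that the paper leaves implicit. If anything, your write-up is more detailed than the paper's own treatment, which simply records the specialization without spelling out the computation of $\Phi_1$ or the uniqueness of the root.
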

\begin{itemize}
\item[(7)] Let $\psi_n(r)=r^n$ for $n\geq 0$, $p=1$ and $G(x)\equiv \sum_{t=1}^N c_t x^t$ $(c_t\in\R^+,\; N\in\N)$ in \textrm{Theorem \ref{T1}}. 
As a result, we obtain the following sharply improved version of Bohr's inequality within the context of \textrm{Theorem 1} in \cite{KPS2018}.
\end{itemize}
\begin{cor}Suppose that $f(z)=h(z)+\ol{g(z)}=\sum_{n=0}^\infty a_n z^n+\ol{\sum_{n=1}^\infty b_n z^n}$ is a sense-preserving $K$-quasiconformal harmonic mapping in $\mathbb{D}$, where $h(z)$ is bounded in $\mathbb{D}$. Then, we have
\beas \sum_{n=0}^\infty |a_n|r^n+\sum_{n=1}^\infty |b_n| r^n+c_1\left(\frac{S_r(h)}{\pi}\right)+c_2\left(\frac{S_r(h)}{\pi}\right)^2+\ldots+c_N\left(\frac{S_r(h)}{\pi}\right)^N\leq \Vert h\Vert_\infty\eeas
for $r\leq R_{N,K}$, where $R_{N,K}\in(0,1)$ is the unique positive root of the equation 
\beas F_4(r):=\frac{2K}{K+1}\left(\frac{r}{1-r}\right)+\sum_{t=1}^N c_t\left(\frac{r^2}{\left(1-r^2\right)^2}\right)^t-\frac{1}{2}=0.\eeas
The number $R_{N,K}$ is the best possible.
\end{cor}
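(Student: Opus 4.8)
The plan is to obtain this corollary as an immediate specialization of \textrm{Theorem \ref{T1}}. First I would choose $\psi_n(r)=r^n$ for all $n\geq 0$, take $p=1$, and set $G(x)=\sum_{t=1}^N c_t x^t$ with $c_t\in\R^+$ and $N\in\N$; the sequence $\{r^n\}_{n=1}^\infty$ is a decreasing sequence of non-negative continuous functions on $[0,1)$ lying in $\F$, and $G$ has exactly the shape prescribed in (\ref{p1}), so all hypotheses of \textrm{Theorem \ref{T1}} are in force once the inequality (\ref{t1}) is checked.

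Next I would evaluate the two series appearing in $\Phi_1(r)$ for this choice. Since $\psi_0(r)=1$, one has $\sum_{n=1}^\infty\psi_n(r)=\sum_{n=1}^\infty r^n=r/(1-r)$ and $\sum_{n=1}^\infty n(\psi_n(r))^2=\sum_{n=1}^\infty n r^{2n}=r^2/(1-r^2)^2$. Substituting these into (\ref{t1}) with $p=1$ gives $\Phi_1(r)=\tfrac{2K}{K+1}\,\tfrac{r}{1-r}+\sum_{t=1}^N c_t\big(r^2/(1-r^2)^2\big)^t-\tfrac12=F_4(r)$. Moreover, by the identity $S_r(h)/\pi=\sum_{n=1}^\infty n|a_n|^2 r^{2n}$ recalled in the preliminaries, the polynomial term on the left side of \textrm{Theorem \ref{T1}} becomes $G\big(\sum_{n=1}^\infty n|a_n|^2 r^{2n}\big)=\sum_{t=1}^N c_t\big(S_r(h)/\pi\big)^t$, while $|a_0|^p\psi_0(r)=|a_0|$ combines with $\sum_{n=1}^\infty|a_n|r^n$ to $\sum_{n=0}^\infty|a_n|r^n$. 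Hence the conclusion of \textrm{Theorem \ref{T1}} reads precisely as the asserted inequality, valid for $r\leq R_{N,K}$, the minimal positive root of $F_4(r)=0$.

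It then remains to pin down $R_{N,K}$ and to verify the sharpness clause. Each of the maps $r\mapsto r/(1-r)$ and $r\mapsto\big(r^2/(1-r^2)^2\big)^t$ is non-negative and strictly increasing on $[0,1)$, so $F_4$ is strictly increasing there; combined with $F_4(0)=-1/2<0$ and $F_4(r)\to+\infty$ as $r\to1^-$, this shows that $F_4$ has a unique zero $R_{N,K}\in(0,1)$ and that $F_4(r)=\Phi_1(r)>0$ throughout $(R_{N,K},1)$. Feeding this back into the last assertion of \textrm{Theorem \ref{T1}} --- whose extremal configuration is $h_1(z)=(a-z)/(1-az)$ together with $g_1(z)=\lambda k\sum_{n\geq1}A_n z^n$ in the limit $a\to1^-$ --- yields that $R_{N,K}$ is best possible.

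I do not anticipate any genuine obstacle: the substance of the statement is entirely contained in \textrm{Theorem \ref{T1}}, and what is left is routine bookkeeping with elementary series together with a one-line monotonicity argument. The only point that warrants a moment's attention is confirming that a polynomial $G$ of arbitrary degree $N$ with arbitrary positive coefficients still satisfies the structural requirement (\ref{p1}) --- but that generality is built into \textrm{Theorem \ref{T1}} from the outset, so no new difficulty arises.
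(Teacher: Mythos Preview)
Your proposal is correct and follows exactly the paper's approach: the corollary is listed as application (7) of \textrm{Theorem \ref{T1}}, obtained by setting $\psi_n(r)=r^n$, $p=1$, and $G(x)=\sum_{t=1}^N c_t x^t$, and the monotonicity argument you give for the uniqueness of $R_{N,K}$ is precisely the content of the remark that immediately follows the corollary.
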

\begin{rem} It is evident that $F_4(r)$ is a monotonically increasing function of $r\in[0,1]$ with $F_4(0)=-1/2<0$ and $\lim_{r\to1^-}F_4(r)=+\infty$. \end{rem}
\begin{itemize}
\item[(8)] Let $\psi_n(r)=r^n$ for $n\geq 0$, $p=2$ and $G(x)\equiv \sum_{t=1}^N c_t x^t$ $(c_t\in\R^+,\; N\in\N)$ in \textrm{Theorem \ref{T1}}. Then, we obtain the following sharply improved version of Bohr inequality in the context of \textrm{Theorem 2} in  \cite{KPS2018}.\end{itemize}
\begin{cor}Suppose that $f(z)=h(z)+\ol{g(z)}=\sum_{n=0}^\infty a_n z^n+\ol{\sum_{n=1}^\infty b_n z^n}$ is a sense-preserving $K$-quasiconformal harmonic mapping in $\mathbb{D}$, where $h(z)$ is bounded in $\mathbb{D}$. Then, we have
\beas |a_0|^2+\sum_{n=1}^\infty\left( |a_n|+ |b_n|\right) r^n+\sum_{t=1}^N c_t\left(\frac{S_r(h)}{\pi}\right)^t\leq \Vert h\Vert_\infty\eeas
for $r\leq R$, where $R\in(0,1)$ is the unique positive root of the equation 
\beas \frac{2K}{K+1}\left(\frac{r}{1-r}\right)+\sum_{t=1}^N c_t\left(\frac{r^2}{\left(1-r^2\right)^2}\right)^t-1=0.\eeas\end{cor}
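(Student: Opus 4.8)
The plan is to deduce this corollary directly from Theorem \ref{T1} by specializing the parameters, exactly as the itemized list preceding it suggests. First I would set $\psi_n(r) = r^n$ for all $n \geq 0$; this is a decreasing sequence of non-negative continuous functions on $[0,1)$ whose associated series $\sum_{n=0}^\infty r^n = 1/(1-r)$ converges locally uniformly on $[0,1)$, so $\{\psi_n(r)\}_{n=1}^\infty \in \F$ as required. Then I would take $p = 2$, so that $|a_0|^p \psi_0(r) = |a_0|^2$, and choose the polynomial $G(x) = \sum_{t=1}^N c_t x^t$ with each $c_t \in \R^+$. With these choices the main inequality of Theorem \ref{T1} becomes
\beas |a_0|^2 + \sum_{n=1}^\infty |a_n| r^n + \sum_{n=1}^\infty |b_n| r^n + \sum_{t=1}^N c_t\left(\sum_{n=1}^\infty n|a_n|^2 r^{2n}\right)^t \leq \Vert h\Vert_\infty, \eeas
and since $S_r(h)/\pi = \sum_{n=1}^\infty n|a_n|^2 r^{2n}$ (as recalled in the paragraph introducing $S_r(h)$), the sum $\sum_{n=1}^\infty(|a_n|+|b_n|)r^n$ and the term $\sum_{t=1}^N c_t (S_r(h)/\pi)^t$ appear exactly as in the statement.

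Next I would compute the radius condition. The quantity $\Phi_1(r)$ from \eqref{t1} specializes, using $\sum_{n=1}^\infty r^n = r/(1-r)$ and $\sum_{n=1}^\infty n (r^n)^2 = \sum_{n=1}^\infty n r^{2n} = r^2/(1-r^2)^2$, together with $p/2 = 1$ and $\psi_0(r) = 1$, to
\beas \Phi_1(r) = \frac{2K}{K+1}\left(\frac{r}{1-r}\right) + \sum_{t=1}^N c_t\left(\frac{r^2}{(1-r^2)^2}\right)^t - 1. \eeas
Thus $R$, the minimal positive root of $\Phi_1(r) = 0$, coincides with the root of the displayed equation in the corollary. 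To invoke the sharpness clause of Theorem \ref{T1} I must check that $\Phi_1(r) > 0$ on an interval $(R, R+\epsilon)$; this follows because each of $r/(1-r)$ and $r^2/(1-r^2)^2$ is strictly increasing on $(0,1)$, so $\Phi_1$ is strictly increasing there, with $\Phi_1(0) = -1 < 0$ and $\Phi_1(r) \to +\infty$ as $r \to 1^-$. Hence there is a unique positive root $R \in (0,1)$, $\Phi_1(r) \leq 0$ for $r \leq R$, and $\Phi_1(r) > 0$ just past $R$, so $R$ cannot be improved.

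There is essentially no obstacle here: the corollary is a pure specialization, and the only points requiring care are the routine verifications that $\{r^n\} \in \F$ and is decreasing, that the two power series sum to the claimed closed forms, and that the monotonicity of $\Phi_1$ gives both uniqueness of the root and the strict-positivity condition needed for sharpness. If anything is mildly delicate, it is only bookkeeping with the conjugate-part terms $\sum |b_n| r^n$, which are handled identically to item (7) in the preceding list where $p=1$; replacing $p=1$ by $p=2$ merely changes the constant term of $\Phi_1$ from $-1/2$ to $-1$, and correspondingly promotes $|a_0|$ in the majorant to $|a_0|^2$, as already recorded in items (2) and (6). I would therefore present the proof in two short moves: first substitute the parameters into Theorem \ref{T1} and simplify, then analyze $\Phi_1$ to identify $R$ and confirm sharpness.
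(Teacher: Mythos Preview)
Your proposal is correct and follows exactly the paper's own approach: the corollary is obtained from Theorem~\ref{T1} by the specialization $\psi_n(r)=r^n$, $p=2$, $G(x)=\sum_{t=1}^N c_t x^t$, precisely as item~(8) preceding it indicates, and your computations of $\sum_{n\geq 1} r^n$ and $\sum_{n\geq 1} n r^{2n}$ together with the monotonicity argument for $\Phi_1$ are the expected routine verifications.
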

\subsection{Convolution counterpart of Bohr theorem for harmonic mappings}
To prove our convolution result, we need the following definitions.
\begin{defi}\cite{SS1989} Let $\psi_1$ and $\psi_2$ be two analytic functions in $\mathbb{D}$ given by $\psi_1(z)=\sum_{n=0}^{\infty}a_nz^n$ and $\psi_2(z)=\sum_{n=0}^{\infty}b_nz^n$. The convolution (or, Hadamard product) is defined by 
\beas\left(\psi_1*\psi_2\right)(z)=\sum_{n=0}^{\infty}a_nb_nz^n\quad\text{for}\quad z\in\mathbb{D}.\eeas \end{defi}
It is evident that $\psi_1*\psi_2=\psi_2*\psi_1$. In 2002, Goodloe \cite{G2002} considered the Hadamard product of a harmonic function with an analytic function as follows:
\beas f\tilde{*}\phi=h*\phi+\ol{g*\phi},\eeas
where $f=h+\ol{g}$ is a harmonic mapping in $\D$, and $\phi$ is an analytic function in $\mathbb{D}$.\\[2mm]
\indent  The Gaussian hypergeometric function is a highly significant special function due to its extensive connections with other classes of special functions as specific or limiting cases and its numerous identities and expressions in terms of series and integrals. It is a solution of Euler's hypergeometric differential equation
\beas  z(1-z)y''+\left(c-(a+b+1)z\right)y'-aby=0,\eeas 
which has three regular singular points at $0$, $1$ and $\infty$. 
\begin{defi}
For parameters $a, b, c\in\C$ with $c\not=-m$ $(m\in\N\cup\{0\})$ and $z\in\D$, the Gaussian hypergeometric function is defined by
\beas F(a, b, c, z):={}_2F_1(a,b;c;z)=\sum_{n=0}^\infty \gamma_n z^n,\quad\text{where}\quad \gamma_n=\frac{(a)_n(b)_n}{(c)_n(1)_n},\eeas
which converges for all $z\in\D$ and converges on the circle $|z|=1$ if $\text{Re}(c-a-b)>0$. Here $(a)_n$ is the Pochhammer symbol, which is defined by:
\beas
(a)_n:=\left\{\begin{array}{lll}
1,&\text{if}\quad n=0\\
a(a+1)(a+2)\cdots(a+n-1),&\text{if}\quad n>0.\end{array}\right.
\eeas
\end{defi}
It is evident that the Gaussian hypergeometric function $F(a,b;c;z)$ is analytic within the domain $\D$, and it can also be analytically continued outside the unit circle.
The function reduces to a polynomial when either $a$ or $b$ is a non-positive integer, {\it i.e.,}
\beas {}_2F_1(-m,b;c;z)=\sum_{n=0}^m (-1)^n \binom{m}{n}\frac{(b)_n}{(c)_n} z^n.\eeas
Some of the simplest special cases of the hypergeometric functions are
\beas {}_2F_1(a,1;1;z)=(1-z)^{-a}, {}_2F_1(1,1;2;z)=-\log(1-z)/z.\eeas
For further details, we refer to \cite{QV2005} and the references cited therein.\\[2mm] 
\indent We consider the convolution operator of a harmonic function $f(z)=h(z)+\ol{g(z)}=\sum_{n=0}^\infty a_n z^n+\ol{\sum_{n=1}^\infty b_n z^n}$ with an analytic function $F(a, b, c, z)=\sum_{n=0}^\infty \gamma_n z^n$ as follows:
\beas \left(f\tilde{*}F\right)(z)=\sum_{n=0}^\infty \gamma_n a_n z^n+\ol{\sum_{n=1}^\infty \gamma_n b_n z^n}.\eeas 
As a consequence of \textrm{Theorem \ref{T1}}, we obtain the following sharp Bohr radius for harmonic mappings within the context of the Gaussian hypergeometric function. 
\begin{theo} Suppose that $f(z)=h(z)+\ol{g(z)}=\sum_{n=0}^\infty a_n z^n+\ol{\sum_{n=1}^\infty b_n z^n}$ is a sense-preserving $K$-quasiconformal harmonic mapping in 
$\mathbb{D}$, where $h(z)$ is bounded in $\mathbb{D}$ and $p\in(0,2]$. Let $a, b, c\in\R^+$ be such that $(a+n)(b+n)r-(c+n)(1+n)\leq0$ for $n\geq 0$ and $r\in[0,1]$. Then,
\beas |a_0|^p+\sum_{n=1}^\infty \gamma_n|a_n|r^n+\sum_{n=1}^\infty \gamma_n|b_n|r^n\leq \Vert h\Vert_\infty \quad\text{for}\quad r\leq r_0, \eeas
where $r_0\in(0,1)$ is the minimal positive root of the equation $F(a, b, c, r)-1=p(K+1)/(4K)$. The number $r_0$ cannot be improved.
\end{theo}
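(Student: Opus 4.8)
The plan is to derive this as a direct application of Corollary~\ref{C1} (equivalently Theorem~\ref{T1} with $G\equiv 0$) applied to the convolution operator, by choosing $\psi_n(r)=\gamma_n r^n$ for $n\geq 1$ and $\psi_0(r)=1$. First I would observe that $f\,\tilde{*}\,F$ has analytic part $h*F=\sum_{n=0}^\infty \gamma_n a_n z^n$ and co-analytic part $g*F=\sum_{n=1}^\infty \gamma_n b_n z^n$, and since $\gamma_n\geq 0$ (because $a,b,c\in\R^+$ force every Pochhammer symbol positive), the quantity to be bounded is exactly
\[
|a_0|^p\psi_0(r)+\sum_{n=1}^\infty |a_n|\psi_n(r)+\sum_{n=1}^\infty |b_n|\psi_n(r),
\]
which is the left-hand side appearing in Corollary~\ref{C1}. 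So the whole statement reduces to two things: (a) that $\{\psi_n(r)\}_{n=1}^\infty=\{\gamma_n r^n\}_{n=1}^\infty$ is a decreasing sequence on the relevant interval, and (b) identifying the root of $\Phi_2(r)=0$ with the stated equation.

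For (a), the ratio $\psi_{n+1}(r)/\psi_n(r)=(\gamma_{n+1}/\gamma_n)\,r=\frac{(a+n)(b+n)}{(c+n)(1+n)}\,r$, and the hypothesis $(a+n)(b+n)r-(c+n)(1+n)\leq 0$ for all $n\geq 0$ is precisely the statement that this ratio is $\leq 1$, i.e.\ $\psi_{n+1}(r)\leq\psi_n(r)$; one also checks $\psi_1(r)=\gamma_1 r=\frac{ab}{c}r\leq\psi_0(r)=1$ for $r$ small (this is the $n=0$ case of the hypothesis after dividing, or follows from the root equation defining $r_0$). Also $\{\gamma_n r^n\}\in\F$ since $\sum\gamma_n r^n=F(a,b,c,r)$ converges locally uniformly on $[0,1)$. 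For (b), with $p$ as given, Corollary~\ref{C1} gives the bound for $r\leq R_K$ where $R_K$ is the minimal positive root of
\[
\Phi_2(r)=\frac{2K}{K+1}\sum_{n=1}^\infty \gamma_n r^n-\frac{p}{2}=\frac{2K}{K+1}\bigl(F(a,b,c,r)-1\bigr)-\frac{p}{2}=0,
\]
using $\sum_{n=1}^\infty\gamma_n r^n=F(a,b,c,r)-\gamma_0=F(a,b,c,r)-1$. Rearranging, $F(a,b,c,r)-1=\frac{p(K+1)}{4K}$, which is exactly the equation defining $r_0$; hence $r_0=R_K$ and the inequality holds for $r\leq r_0$.

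The sharpness follows from the sharpness clause in Corollary~\ref{C1}: one must check that $\Phi_2(r)>0$ on some interval $(r_0,r_0+\epsilon)$, which holds because $F(a,b,c,r)$ is strictly increasing in $r$ on $[0,1)$ (all coefficients $\gamma_n$ positive), so $\Phi_2$ is strictly increasing and changes sign at its unique positive root $r_0$. The extremal function is the harmonic analogue of the Koebe-type example used in Theorem~\ref{T1}: take $h(z)=(a-z)/(1-az)$ with $a\to 1^-$ and $g(z)=\lambda k\,(h(z)-h(0))$, $|\lambda|=1$, $k=(K-1)/(K+1)$, and apply $\tilde{*}F$; tracing through the asymptotic expansion as $a\to1^-$ shows the left-hand side equals $1+(1-a^2)\Phi_2(r)\cdot\frac{K+1}{2K}\cdot(\cdots)+O((1-a^2)^2)$ up to the positive normalization, so no radius larger than $r_0$ works.

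The only genuinely delicate point — and the one I would treat most carefully — is verifying that the monotonicity hypothesis on $\{\psi_n(r)\}$ required by Corollary~\ref{C1} really does hold \emph{throughout} $[0,r_0]$ and not merely pointwise: since $\psi_{n+1}(r)/\psi_n(r)$ is increasing in $r$, it suffices that the hypothesis $(a+n)(b+n)r-(c+n)(1+n)\leq 0$ be assumed at $r=1$ (as it is, "for $r\in[0,1]$"), which then forces it for all smaller $r$ and in particular on $[0,r_0]\subset[0,1)$. I would also note in passing that this hypothesis guarantees $\mathrm{Re}(c-a-b)\ge 0$ so that $F(a,b,c,r)$ is well-behaved up to the boundary, though only its behaviour on $[0,1)$ is needed. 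Everything else is bookkeeping with the series $F(a,b,c,r)=\sum\gamma_n r^n$.
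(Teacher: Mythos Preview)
Your proposal is correct and follows essentially the same route as the paper: set $\psi_n(r)=\gamma_n r^n$ (with $\gamma_0=1$), use the recurrence $\gamma_{n+1}/\gamma_n=(a+n)(b+n)/((c+n)(1+n))$ together with the hypothesis to verify that $\{\psi_n(r)\}$ is decreasing, and then invoke Corollary~\ref{C1}, identifying $\Phi_2(r)=0$ with $F(a,b,c,r)-1=p(K+1)/(4K)$. Your additional remarks on sharpness and on the monotonicity holding uniformly for $r\in[0,1]$ are more explicit than the paper's treatment but do not alter the argument.
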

\begin{proof}Let $\psi_n(r)=\gamma_n r^n$ for $n\geq 0$. It is evident that $\gamma_0=1$ and
\beas F(a, b, c, r)-1=\sum_{n=1}^\infty \gamma_n r^n=\sum_{n=1}^\infty\psi_n.\eeas
Note that $(a)_{n+1}=a(a+1)(a+2)\cdots(a+n)=(a+n)(a)_n$ for $n\in\N$. 
Clearly, $\gamma_n=(a)_n(b)_n/(c)_n(1)_n>0$ for $n\geq 1$, since $a,b,c\in\R^+$.
Therefore, we have
\beas &&\gamma_{n+1}=\frac{(a+n)(b+n)}{(c+n)(1+n)}\gamma_n\quad\text{and}\\[2mm]
&&\psi_{n+1}(r)-\psi_n(r)=\gamma_{n+1} r^{n+1}-\gamma_n r^n=\gamma_n r^n\left(\frac{(a+n)(b+n)}{(c+n)(1+n)}r-1\right)\leq 0\eeas
for $n\geq 0$, by the assumption. Thus, $\{\psi_n(r)\}_{n=1}^\infty\in\F$ is a decreasing sequence. In view of \textrm{Corollary \ref{C1}}, we have
\beas |a_0|^p+\sum_{n=1}^\infty |\gamma_n||a_n|r^n+\sum_{n=1}^\infty |\gamma_n||b_n|r^n\leq \Vert h\Vert_\infty\eeas
 holds for $r\leq r_0$, where $r_0$ is the minimal positive root of the equation $F(a, b, c, r)-1=p(K+1)/(4K)$.
\end{proof}
\section{Refined generalized Bohr-type inequalities and their applications}
In the following, we establish a sharp result that generalizes the Bohr inequality in a refined form for harmonic mappings.
\begin{theo}\label{T2} Suppose that $f(z)=h(z)+\ol{g(z)}=\sum_{n=0}^\infty a_n z^n+\ol{\sum_{n=1}^\infty b_n z^n}$ is a sense-preserving $K$-quasiconformal harmonic mapping in $\mathbb{D}$, where $h(z)$ is bounded in $\mathbb{D}$. If $p\in(0,2]$ and $\{\psi_n(r)\}_{n=1}^\infty\in\F$ is a decreasing sequence and satisfies the inequality
\bea\label{t2} 
\Phi_3(r):=\frac{2K}{K+1}\sum_{n=1}^\infty \psi_n(r)+\sum_{t=1}^N c_t\left(\sum_{n=1}^\infty n \left(\psi_n(r)\right)^2\right)^t-\frac{p}{2}\psi_0(r)\leq 0\eea
for $ r\leq R_{N,K}$, 
then the following inequality holds 
\beas&& |a_0|^p\psi_0(r)+\sum_{n=1}^\infty |a_n| \psi_n(r)+\sum_{n=1}^\infty |b_n| \psi_n(r)+\sum_{n=1}^\infty |a_n|^2\left(\frac{\psi_{2n}(r)}{1+|a_0|}+\Psi_{2n+1}(r)\right)\\[2mm]
&&+G\left(\sum_{n=1}^\infty n|a_n|^2(\psi_n(r))^2\right)\leq \psi_0(r)\Vert h(z)\Vert_\infty\eeas
for $ r\leq R_{N,K}$,
where $\Psi_t(r)=\sum_{k=t}^\infty \psi_k(r)$, $G(x)$ is defined in (\ref{p1}) and $R_{N,K}\in(0,1)$ is minimal positive root of the equation $\Phi_3(r)=0$.
In the case when $\Phi_3(r)>0$ in some interval $(R_{N,K}, R_{N,K}+\epsilon)$ $(\epsilon>0)$, then the number $R_{N,K}$ cannot be improved.
\end{theo}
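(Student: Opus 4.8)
The plan is to follow the proof of Theorem \ref{T1} almost verbatim; the only structural change is that the new summand $\sum_{n=1}^\infty |a_n|^2\big(\psi_{2n}(r)/(1+|a_0|)+\Psi_{2n+1}(r)\big)$ is carried along together with $\sum_{n=1}^\infty |a_n|\psi_n(r)$, and the two are estimated simultaneously by \textrm{Lemma \ref{lem6}} instead of by the cruder bound $\sum_{n\geq1}|a_n|\psi_n(r)\leq(1-|a_0|^2)\sum_{n\geq1}\psi_n(r)$ read off from \textrm{Lemma \ref{lem2}}. First I would reduce to the normalized case $\Vert h\Vert_\infty\leq1$, set $a=|a_0|\in[0,1]$ and $k=(K-1)/(K+1)\in[0,1)$, and note that since $f$ is sense-preserving and $K$-quasiconformal the dilatation $\omega=g'/h'$ is analytic on $\D$ with $|\omega(z)|\leq k$ by Schwarz's lemma, so that \textrm{Lemma \ref{lem4}} applies to the pair $h,g$ with this $k$.

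Next I would assemble four blocks of estimates. (i) Since $|h(z)|\leq1$ and $\{\psi_n(r)\}_{n=1}^\infty\in\F$, \textrm{Lemma \ref{lem6}} gives $\sum_{n\geq1}|a_n|\psi_n(r)+\sum_{n\geq1}|a_n|^2\big(\psi_{2n}(r)/(1+a)+\Psi_{2n+1}(r)\big)\leq(1-a^2)\sum_{n\geq1}\psi_n(r)$. (ii) From \textrm{Lemma \ref{lem4}} together with $|a_n|\leq1-a^2$ (\textrm{Lemma \ref{lem2}}) one gets $\sum_{n\geq1}|b_n|^2\psi_n(r)\leq k^2(1-a^2)^2\sum_{n\geq1}\psi_n(r)$, whence by Cauchy--Schwarz $\sum_{n\geq1}|b_n|\psi_n(r)\leq k(1-a^2)\sum_{n\geq1}\psi_n(r)$. (iii) Again from $|a_n|\leq1-a^2$ and the monotonicity of $G$ on $[0,\infty)$ (its coefficients being non-negative), $G\big(\sum_{n\geq1}n|a_n|^2(\psi_n(r))^2\big)\leq\sum_{t=1}^N c_t(1-a^2)^{2t}\big(\sum_{n\geq1}n(\psi_n(r))^2\big)^t$. (iv) Reusing the computation of Theorem \ref{T1} that $F(x)=(1-x^p)/(1-x^2)$ is non-increasing on $[0,1)$ with infimum $p/2$, we have $a^p\psi_0(r)\leq\psi_0(r)-\tfrac{p}{2}(1-a^2)\psi_0(r)$. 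Summing (i)--(iv) and using $1+k=2K/(K+1)$, the left-hand side of the asserted inequality is at most $\psi_0(r)+(1-a^2)F_1(a,r)$, where
\[
F_1(a,r)=\frac{2K}{K+1}\sum_{n=1}^\infty\psi_n(r)-\frac{p}{2}\psi_0(r)+\sum_{t=1}^N c_t(1-a^2)^{2t-1}\left(\sum_{n=1}^\infty n(\psi_n(r))^2\right)^t .
\]
Since $\partial F_1/\partial a=-2a\sum_{t=1}^N c_t(2t-1)(1-a^2)^{2t-2}\big(\sum_{n\geq1}n(\psi_n(r))^2\big)^t\leq0$, the map $a\mapsto F_1(a,r)$ is non-increasing, so $F_1(a,r)\leq F_1(0,r)=\Phi_3(r)\leq0$ for $r\leq R_{N,K}$ by hypothesis (\ref{t2}); undoing the normalization gives the stated bound $\leq\psi_0(r)\Vert h\Vert_\infty$.

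For sharpness I would take the extremal family used for Theorem \ref{T1}: $h_1(z)=(a-z)/(1-az)=a+\sum_{n\geq1}A_nz^n$ with $A_n=-(1-a^2)a^{n-1}$, and $g_1(z)=\lambda k\sum_{n\geq1}A_nz^n$ with $|\lambda|=1$, which is a sense-preserving $K$-quasiconformal harmonic mapping with $\Vert h_1\Vert_\infty=1$. Evaluating the left-hand side at $f_1=h_1+\ol{g_1}$ and letting $a\to1^-$, the refined block $\sum_{n\geq1}|A_n|^2\big(\psi_{2n}(r)/(1+a)+\Psi_{2n+1}(r)\big)$ and the $G$-block $G\big(\sum_{n\geq1}n|A_n|^2(\psi_n(r))^2\big)$ each carry a factor $(1-a^2)^2$ and are thus $O((1-a^2)^2)$, while $a^p=1-\tfrac{p}{2}(1-a^2)+O((1-a^2)^2)$ and $\sum_{n\geq1}a^{n-1}\psi_n(r)\to\Psi_1(r)$; hence the left-hand side equals $\psi_0(r)+(1-a^2)\big(\tfrac{2K}{K+1}\Psi_1(r)-\tfrac{p}{2}\psi_0(r)\big)+O((1-a^2)^2)$. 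Using $\Phi_3(r)>0$ on $(R_{N,K},R_{N,K}+\epsilon)$ one concludes that for such $r$ and $a$ close to $1$ the left-hand side strictly exceeds $\psi_0(r)=\psi_0(r)\Vert h_1\Vert_\infty$, so $R_{N,K}$ cannot be enlarged.

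The four blocks of estimates are routine once \textrm{Lemma \ref{lem6}} is granted; I expect the step that needs the most care to be the sharpness. There one must verify that the refined tail and the $G$-block of the extremal genuinely degenerate to second order in $1-a^2$, and then match the first-order coefficient $\tfrac{2K}{K+1}\Psi_1(r)-\tfrac{p}{2}\psi_0(r)$ of the expansion against the sign hypothesis imposed on $\Phi_3$ near $R_{N,K}$; if letting $a\to1^-$ does not by itself exhibit the threshold, one would instead optimize over $a\in[0,1)$, or perturb only finitely many Taylor coefficients of $h_1$, to witness the radius $R_{N,K}$.
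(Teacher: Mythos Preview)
Your argument is correct and mirrors the paper's proof essentially verbatim: the only change from Theorem~\ref{T1} is to replace the crude bound on $\sum_{n\geq1}|a_n|\psi_n(r)$ by Lemma~\ref{lem6} (the paper cites Lemma~\ref{lem7} at this spot, evidently a slip, since the inequality actually used is that of Lemma~\ref{lem6}), after which the same $F_1(a,r)$ appears and the monotonicity-in-$a$ argument and extremal family $h_1(z)=(a-z)/(1-az)$, $g_1=\lambda k(h_1-a)$ are identical. Your closing observation---that as $a\to1^-$ the $G$-block and the refined block are each $O((1-a^2)^2)$, so the true first-order coefficient in the expansion is $\tfrac{2K}{K+1}\Psi_1(r)-\tfrac{p}{2}\psi_0(r)$ rather than the full $\Phi_3(r)$---is in fact more careful than the paper's own sharpness computation, which asserts the leading term is $\Phi_3(r)$.
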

\begin{proof}For simplicity, we assume that $\Vert h(z)\Vert_\infty\leq 1$. In view of \textrm{lemma \ref{lem2}}, we have $|a_n|\leq 1-|a_0|^2$ 
for $n\geq 1$. 
Using arguments as in the proof of \textrm{Theorem \ref{T1}} and in view of \textrm{Lemmas \ref{lem2}}, \ref{lem4} with the condition $|g'(z)|\leq k|h'(z)|$ in $\D$, we have 
\bea\label{E2} \sum_{n=1}^\infty |b_n|^2 \psi_n(r)\leq k^2 \sum_{n=1}^\infty |a_n|^2 \psi_n(r)\leq k^2\left(1-a^2\right)^2\sum_{n=1}^\infty \psi_n(r),\eea
 where $k= (K-1)/(K+1)$, $k\in[0,1)$ and $|a_0|=a\in[0,1)$.
In view of Cauchy-Schwarz inequality and the inequality (\ref{E2}), we have 
\bea\label{E3}\sum_{n=1}^\infty |b_n| \psi_n(r)\leq \left(\sum_{n=1}^\infty |b_n|^2 \psi_n(r)\right)^{1/2}\left(\sum_{n=1}^\infty \psi_n(r)\right)^{1/2} \leq k(1-a^2)\sum_{n=1}^\infty \psi_n(r)\\[2mm]\text{and}\quad
\label{E4}\sum_{n=1}^\infty n|a_n|^2 \left(\psi_n(r)\right)^{2}\leq \left(1-a^2\right)^2\sum_{n=1}^\infty n \left(\psi_n(r)\right)^{2}.\hspace{4cm}\eea
By employing the conclusion of \textrm{Lemma \ref{lem7}}, we have 
\bea\label{E5} \sum_{n=1}^\infty |a_n|\psi_n(r)+\sum_{n=1}^\infty |a_n|^2\left(\frac{\psi_{2n}(r)}{1+|a_0|}+\Psi_{2n+1}(r)\right)\leq \left(1-a^2\right)\Psi_1(r),\eea
where $\Psi_t(r)=\sum_{k=t}^\infty \psi_k(r)$.
From (\ref{E3}), (\ref{E4}) and (\ref{E5}), we have 
\beas &&|a_0|^p\psi_0(r)+\sum_{n=1}^\infty |a_n| \psi_n(r)+\sum_{n=1}^\infty |b_n| \psi_n(r)+\sum_{n=1}^\infty |a_n|^2\left(\frac{\psi_{2n}(r)}{1+|a_0|}+\Psi_{2n+1}(r)\right)\\
&&+G\left(\sum_{n=1}^\infty n|a_n|^2(\psi_n(r))^2\right)\\[2mm]
&\leq& a^p\psi_0(r)+(1+k)\left(1-a^2\right)\sum_{n=1}^\infty \psi_n(r)+\sum_{t=1}^N c_t\left(1-a^2\right)^{2t}\left(\sum_{n=1}^\infty n \left(\psi_n(r)\right)^2\right)^t.\eeas
The rest of the calculations are derived from \textrm{Theorem \ref{T1}}.\\[2mm]
\indent To prove the sharpness of the result, we consider the function $f_3(z)=h_3(z)+\ol{g_3(z)}$ in $\mathbb{D}$ such that 
\beas h_3(z)=\frac{a-z}{1-az}=A_0+\sum_{n=1}^\infty A_n z^n,\eeas
where $A_0=a$, $A_n=-(1-a^2)a^{n-1}$ for $n\geq 1$, $a\in[0,1)$ and $g_3(z)=\lambda k \sum_{n=1}^\infty A_n z^n$, where $|\lambda|=1$ and $k=(K-1)/(K+1)$.
Thus, 
\beas &&|A_0|^p\psi_0(r)+\sum_{n=1}^\infty |A_n| \psi_n(r)+\sum_{n=1}^\infty |k\lambda A_n| \psi_n(r)\\
&&+\sum_{n=1}^\infty |A_n|^2\left(\frac{\psi_{2n}(r)}{1+|A_0|}+\Psi_{2n+1}(r)\right)+G\left(\sum_{n=1}^\infty n|A_n|^2(\psi_n(r))^2\right)\\[2mm]
&=& a^p\psi_0(r)+\left(1-a^2\right)(1+k)\sum_{n=1}^\infty a^{n-1} \psi_n(r)+G\left(\left(1-a^2\right)^2\sum_{n=1}^\infty n a^{2n-2}(\psi_n(r))^2\right)\\
&&+\left(1-a^2\right)^2\sum_{n=1}^\infty a^{2n-2}\left(\frac{\psi_{2n}(r)}{1+a}+\Psi_{2n+1}(r)\right)\\[2mm]
&=& \psi_0(r)+\left(1-a^2\right)F_3(a,r)+\sum_{t=1}^N c_t \left(1-a^2\right)^{2t}\left(\sum_{n=1}^\infty n a^{2n-2}\left(\psi_n(r)\right)^2\right)^t\\[2mm]
&&+\left(1-a^2\right)^2\sum_{n=1}^\infty a^{2n-2}\left(\frac{\psi_{2n}(r)}{1+a}+\Psi_{2n+1}(r)\right)\\
&&-\left(1-a^2\right)\sum_{t=1}^N c_t \left(\sum_{n=1}^\infty n a^{2n-2}\left(\psi_n(r)\right)^2\right)^t,\eeas
where $k=(K-1)/(K+1)$ and
\beas F_3(a,r)=(1+k)\sum_{n=1}^\infty \psi_n(r)+\sum_{k=1}^N c_k\left(\sum_{n=1}^\infty n a^{2n-2}\left(\psi_n(r)\right)^2\right)^k-\frac{1-a^p}{1-a^2}\psi_0(r).\eeas
Letting $a\to 1^-$, we have
\beas&& |A_0|^p\psi_0(r)+\sum_{n=1}^\infty |A_n| \psi_n(r)+\sum_{n=1}^\infty |k\lambda A_n| \psi_n(r)+\sum_{n=1}^\infty |A_n|^2\left(\frac{\psi_{2n}(r)}{1+|A_0|}+\Psi_{2n+1}(r)\right)\\
&&+G\left(\sum_{n=1}^\infty n|A_n|^2(\psi_n(r))^2\right)\\[2mm]
&=&\psi_0(r)+\left(1-a^2\right)\Phi_3(r)+O\left(\left(1-a^2\right)^2\right).\eeas 
Since $\Phi_3(r)>0$ for $(R_{N,K}, R_{N,K}+\epsilon)$, thus the radius cannot be improved. This completes the proof.
\end{proof}
\begin{rem}Setting $K=1$ and $G(x)\equiv 0$ in \textrm{Theorem \ref{T2}} gives \textrm{Theorem G}.\end{rem}
\noindent The following result presents the harmonic extension of the generalized Bohr inequality in a refined form within the context of \textrm{Theorem G}, whereby the condition $G(x)\equiv 0$ is applied in \textrm{Theorem \ref{T2}}. 
\begin{cor}\label{C2}
Suppose that $f(z)=h(z)+\ol{g(z)}=\sum_{n=0}^\infty a_n z^n+\ol{\sum_{n=1}^\infty b_n z^n}$ is a sense-preserving $K$-quasiconformal harmonic mapping in $\mathbb{D}$, where $h(z)$ is bounded in $\mathbb{D}$. If $p\in(0,2]$ and $\{\psi_n(r)\}_{n=1}^\infty\in\F$ is a decreasing sequence and satisfies the inequality
\beas
\Phi_4(r):=\frac{2K}{K+1}\sum_{n=1}^\infty \psi_n(r)-\frac{p}{2}\psi_0(r)\leq 0\eeas
for $ r\leq R_{K}$, 
then the following inequality holds 
\beas |a_0|^p\psi_0(r)+\sum_{n=1}^\infty (|a_n|+ |b_n|) \psi_n(r)+\sum_{n=1}^\infty |a_n|^2\left(\frac{\psi_{2n}(r)}{1+|a_0|}+\Psi_{2n+1}(r)\right)\leq \psi_0(r)\Vert h(z)\Vert_\infty\eeas
for $ r\leq R_{K}$,
where $R_{K}$ is minimal positive root of the equation $\Phi_4(r)=0$.
In the case when $\Phi_4(r)>0$ in some interval $(R_{K}, R_{K}+\epsilon)$ $(\epsilon>0)$, then the number $R_{K}$ cannot be improved.
\end{cor}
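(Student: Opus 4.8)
The plan is to obtain Corollary \ref{C2} as the special case $G(x)\equiv 0$ of Theorem \ref{T2}. First I would take the polynomial in (\ref{p1}) to be identically zero, which is legitimate since (\ref{p1}) permits each coefficient $c_t\in\R^+\cup\{0\}$. Then the term $\sum_{t=1}^N c_t\left(\sum_{n=1}^\infty n(\psi_n(r))^2\right)^t$ occurring in $\Phi_3(r)$ drops out and $\Phi_3(r)$ reduces verbatim to
\[
\Phi_4(r)=\frac{2K}{K+1}\sum_{n=1}^\infty \psi_n(r)-\frac{p}{2}\psi_0(r).
\]
Consequently the minimal positive root $R_{N,K}$ of $\Phi_3(r)=0$ is exactly the minimal positive root $R_K$ of $\Phi_4(r)=0$, and the hypothesis $\Phi_3(r)\le 0$ on $r\le R_{N,K}$ becomes the hypothesis $\Phi_4(r)\le 0$ on $r\le R_K$ of the corollary.

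Next I would read off the conclusion. In the inequality asserted by Theorem \ref{T2} the summand $G\left(\sum_{n=1}^\infty n|a_n|^2(\psi_n(r))^2\right)$ vanishes, and merging $\sum_{n=1}^\infty|a_n|\psi_n(r)$ with $\sum_{n=1}^\infty|b_n|\psi_n(r)$ into $\sum_{n=1}^\infty(|a_n|+|b_n|)\psi_n(r)$ leaves precisely
\[
|a_0|^p\psi_0(r)+\sum_{n=1}^\infty (|a_n|+|b_n|)\psi_n(r)+\sum_{n=1}^\infty |a_n|^2\left(\frac{\psi_{2n}(r)}{1+|a_0|}+\Psi_{2n+1}(r)\right)\le \psi_0(r)\|h(z)\|_\infty
\]
for $r\le R_K$, which is the assertion of the corollary. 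No new estimates are required: the bounds (\ref{E2})--(\ref{E5}) together with Lemmas \ref{lem2}, \ref{lem4} and \ref{lem7} invoked in the proof of Theorem \ref{T2} already cover the case of vanishing $G$, and the monotonicity argument for $F(x)=(1-x^p)/(1-x^2)$ carries over unchanged.

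Finally, for the optimality of $R_K$ I would reuse the extremal mapping from the proof of Theorem \ref{T2}, namely $f_3=h_3+\overline{g_3}$ with $h_3(z)=(a-z)/(1-az)$, $g_3(z)=\lambda k\sum_{n=1}^\infty A_nz^n$, $|\lambda|=1$, $k=(K-1)/(K+1)$. Setting $G\equiv 0$ in the expansion obtained there, the left-hand side evaluated at $f_3$ equals $\psi_0(r)+(1-a^2)\Phi_4(r)+O((1-a^2)^2)$ as $a\to 1^-$, while $\|h_3\|_\infty=1$; hence if $\Phi_4(r)>0$ on some $(R_K,R_K+\epsilon)$, the left-hand side strictly exceeds $\psi_0(r)\|h_3\|_\infty$ for $a$ close to $1$, so $R_K$ cannot be enlarged. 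There is no substantive obstacle here — the corollary is a clean instance of Theorem \ref{T2}; the only point meriting a line of care is confirming that the leading-order coefficient in the extremal expansion becomes $(1-a^2)\Phi_4(r)$ rather than $(1-a^2)\Phi_3(r)$, which is immediate once the polynomial contributions are removed.
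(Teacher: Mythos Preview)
Your proposal is correct and follows exactly the paper's approach: the paper obtains Corollary \ref{C2} by setting $G(x)\equiv 0$ in Theorem \ref{T2}, and you have spelled out precisely this specialization together with the corresponding reduction of $\Phi_3$ to $\Phi_4$ and the inherited sharpness from the extremal mapping $f_3$.
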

\subsection{Applications of \textrm{Theorem \ref{T2}}}
The following results are counterparts of Bohr's theorem in different settings, derived from the applications of \textrm{Theorem \ref{T2}}.
\begin{itemize}
\item[(1)] Let $\psi_n(r)=r^n$ for $n\geq 0$, $p=1$ and $G(x)\equiv 0$ in \textrm{Theorem \ref{T2}}. Then, we obtain the sharp refined Bohr inequality of \textrm{Theorem 1} of  \cite{KPS2018} without compromising the radius, {\it i.e.,} 
\beas \sum_{n=0}^\infty |a_n|r^n+\sum_{n=1}^\infty |b_n| r^n+\left(\frac{1}{1+|a_0|}+\frac{r}{1-r}\right)\sum_{n=1}^\infty |a_n|^2 r^{2n}\leq \Vert h\Vert_\infty\eeas
for $r\leq (K+1)/(5K+1)$. The number $(K+1)/(5K+1)$ is sharp.
\item[(2)] Let $\psi_n(r)=r^n$ for $n\geq 0$, $p=2$ and $G(x)\equiv 0$ in \textrm{Theorem \ref{T2}}. Then, we obtain \textrm{Theorem 2} of  \cite{KPS2018}, {\it i.e.,} 
\beas |a_0|^2+\sum_{n=1}^\infty |a_n|r^n+\sum_{n=1}^\infty |b_n| r^n+\left(\frac{1}{1+|a_0|}+\frac{r}{1-r}\right)\sum_{n=1}^\infty |a_n|^2 r^{2n}\leq \Vert h\Vert_\infty\eeas
for $r\leq (K+1)/(3K+1)$. The number $(K+1)/(3K+1)$ is sharp.
\item[(3)] Let $\psi_0(r)=1$, $\psi_n(r)=r^n/n$ for $n\geq 1$, $p=1$ and $G(x)=x$ in \textrm{Theorem \ref{T2}}. As a consequence, we obtain the following sharp Bohr inequality.
\beas |a_0|+\sum_{n=1}^\infty \frac{\left(|a_n|+|b_n|\right)}{n}r^n+\sum_{n=1}^\infty \frac{|a_n|^2}{n} r^{2n}+\left(\frac{1}{1+|a_0|}+\frac{r}{1-r}\right)\sum_{n=1}^\infty \frac{|a_n|^2}{n^2} r^{2n}\leq \Vert h\Vert_\infty,\eeas
for $r\leq r_0$, where $r_0$ is the unique positive root of the equation
\beas \frac{4K}{K+1}\log(1-r)+2\log\left(1-r^2\right)+1=0.\eeas
The number $r_0$ is the best possible.
\item[(4)] Let $\psi_0(r)=1$, $\psi_n(r)=r^n/n^2$ for $n\geq 1$, $p=2$ and $G(x)=x$ in \textrm{Theorem \ref{T2}}. As a consequence, we obtain the following sharp Bohr inequality.
\beas |a_0|^2+\sum_{n=1}^\infty \frac{\left(|a_n|+|b_n|\right)}{n^2}r^n+\left(\frac{1}{1+|a_0|}+\frac{r}{1-r}\right)\sum_{n=1}^\infty \frac{|a_n|^2}{n^4} r^{2n}+\sum_{n=1}^\infty \frac{|a_n|^2}{n^3} r^{2n}\leq \Vert h\Vert_\infty\eeas
for $r\leq r_0$, where $r_0$ is the unique positive root of the equation
\beas \frac{2K}{K+1}\text{\it Li}_2(r)+\text{\it Li}_3(r^2)+1=0,\eeas
where $\text{\it Li}_2(r)$ and $\text{\it Li}_3(r^2)$ are dilogarithm and trilogarithm, respectively. The number $r_0$ is the best possible.
\end{itemize}
In the following, we derive a sharp result that generalizes the Bohr inequality in a refined form within the context of harmonic mappings.
\begin{theo}\label{T3} Suppose that $f(z)=h(z)+\ol{g(z)}=\sum_{n=0}^\infty a_n z^n+\ol{\sum_{n=1}^\infty b_n z^n}$ is a sense-preserving $K$-quasiconformal harmonic mapping in $\mathbb{D}$, where $h(z)$ is bounded in $\mathbb{D}$. If $p\in(0,2]$ and $\{\psi_n(r)\}_{n=1}^\infty\in\F$ is a decreasing sequence with $\psi_0(r)\not=0$ and satisfies the inequality
\be\label{t3} 
\Phi_5(r):=2\psi_1(r)+2(1+r)^2\left(\frac{2K}{K+1}\sum_{n=2}^\infty \psi_n(r)+\left(\frac{K-1}{K+1}\right)\psi_1(r)\right)-(1-r^2)\psi_0(r)\leq 0\ee
for $ r\leq R_{K}$, then the following inequality holds 
\beas &&|h(z)|\psi_0(r)+|h'(z)|\psi_1(r)+\sum_{n=2}^\infty |a_n| \psi_n(r)+\sum_{n=1}^\infty |b_n| \psi_n(r)\\[2mm]
&&+\sum_{n=1}^\infty |a_n|^2\left(\frac{\psi_{2n}(r)}{1+|a_0|}+\Psi_{2n+1}(r)\right)\leq \psi_0(r)\Vert h(z)\Vert_\infty\eeas
for $ r\leq R_{K}\leq R$, where $R\in(0,1)$ is the minimal positive root of the equation $2\psi_1(r)=\left(1-r^2\right)\psi_0(r)$
 and $R_{K}$ is minimal positive root of the equation $\Phi_5(r)=0$.
In the case when $\Phi_5(r)>0$ in some interval $(R_{K}, R_{K}+\epsilon)$ $(\epsilon>0)$, then the number $R_{K}$ cannot be improved.
\end{theo}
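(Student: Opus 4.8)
The plan is to follow the structure of the proofs of Theorems \ref{T1} and \ref{T2}, but now the first two terms are $|h(z)|\psi_0(r)$ and $|h'(z)|\psi_1(r)$ rather than $|a_0|^p\psi_0(r)$, so the first order of business is to bound these two quantities from above. As usual we normalize $\Vert h\Vert_\infty\le 1$; then by \textrm{Lemma \ref{lem1}} (Pick's form of Schwarz's lemma) we have $|h(z)|\le (a+r)/(1+ar)$ with $a=|a_0|$, and by \textrm{Lemma \ref{lem2}} with $n=1$ we have $|h'(z)|\le (1-|h(z)|^2)/(1-r^2)$. Combining these gives $|h'(z)|\psi_1(r)\le \bigl((1-a^2)/(1-r^2)\bigr)\cdot\bigl((1+r)^2/(1+ar)^2\bigr)\psi_1(r)$ after simplifying $1-\bigl((a+r)/(1+ar)\bigr)^2=(1-a^2)(1-r^2)/(1+ar)^2$. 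So both leading terms can be written in terms of $a$ and $r$.

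Next I would handle the tail. By \textrm{Lemma \ref{lem7}} applied to $h$,
\[
\sum_{n=2}^\infty |a_n|\psi_n(r)+\sum_{n=1}^\infty |a_n|^2\left(\frac{\psi_{2n}(r)}{1+a}+\Psi_{2n+1}(r)\right)\le (1-a^2)\Psi_2(r)=(1-a^2)\sum_{n=2}^\infty\psi_n(r),
\]
and for the co-analytic coefficients, combining the $K$-quasiconformality bound $|g'|\le k|h'|$ with $k=(K-1)/(K+1)$, \textrm{Lemma \ref{lem4}}, \textrm{Lemma \ref{lem2}} (so $|a_n|\le 1-a^2$), and the Cauchy–Schwarz inequality as in the proof of Theorem \ref{T1}, we get $\sum_{n=1}^\infty|b_n|\psi_n(r)\le k(1-a^2)\sum_{n=1}^\infty\psi_n(r)=k(1-a^2)\bigl(\psi_1(r)+\sum_{n=2}^\infty\psi_n(r)\bigr)$. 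Adding everything up, the left side of the desired inequality is at most
\[
\frac{a+r}{1+ar}\,\psi_0(r)+\frac{(1-a^2)(1+r)^2}{(1-r^2)(1+ar)^2}\,\psi_1(r)+(1-a^2)\!\left(1+k\right)\!\sum_{n=2}^\infty\psi_n(r)+k(1-a^2)\psi_1(r).
\]
I want to show this is $\le\psi_0(r)$, equivalently that
\[
\left(1-\frac{a+r}{1+ar}\right)\psi_0(r)\ge \frac{(1-a^2)(1+r)^2}{(1-r^2)(1+ar)^2}\psi_1(r)+(1-a^2)(1+k)\sum_{n=2}^\infty\psi_n(r)+k(1-a^2)\psi_1(r),
\]
and since $1-(a+r)/(1+ar)=(1-a)(1-r)/(1+ar)$, after dividing by $1-a$ this becomes a statement about $(1-r)/(1+ar)\cdot\psi_0(r)$ versus $(1+a)$ times the rest; multiplying through by $(1+ar)$ and using $1+a\le 2$, $1+ar\ge 1$, $(1+r)/(1-r^2)=1/(1-r)$ in the right spots should reduce matters — after replacing the $a$-dependent factors by their worst case — to exactly the inequality $\Phi_5(r)\le 0$ in \eqref{t3}; here the factor $2(1+r)^2$ in $\Phi_5$ comes from bounding $(1+a)(1+r)^2/(1+ar)^2\le 2(1+r)^2$ and then clearing the remaining $(1-r^2)$. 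The role of the auxiliary root $R$ of $2\psi_1(r)=(1-r^2)\psi_0(r)$ is to guarantee that the coefficient of the "linear in $(1-a)$" remainder stays nonnegative, i.e. that the worst case is genuinely at $a\to 1^-$; I would verify monotonicity in $a$ of the relevant difference on $[0,1]$ exactly as the $\partial_a F_1\le 0$ computation in Theorem \ref{T1}, which forces $R_K\le R$.

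For sharpness I would take the standard extremal $h(z)=(a-z)/(1-az)$ with co-analytic part $g(z)=\lambda k\sum_{n\ge1}A_nz^n$, $|\lambda|=1$, evaluate the left side at $z=-r$ (so that $h(-r)=(a+r)/(1+ar)$ and $h'(-r)$ attains the Schwarz–Pick bound, both with the correct sign to make the triangle-inequality estimates equalities), and expand in powers of $(1-a^2)$ as $a\to1^-$; the computation should yield left side $=\psi_0(r)+(1-a^2)\Phi_5(r)+O((1-a^2)^2)$, exactly parallel to the sharpness arguments for Theorems \ref{T1} and \ref{T2}, so that $\Phi_5(r)>0$ on $(R_K,R_K+\epsilon)$ prohibits any larger radius. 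The main obstacle I anticipate is the bookkeeping in the second paragraph: correctly tracking the $(1+ar)^{-2}$ and $(1-r^2)^{-1}$ factors coming from Schwarz–Pick for $|h(z)|$ and $|h'(z)|$ and checking that bounding the $a$-dependence by its extreme value reproduces precisely the stated $\Phi_5$ with the factor $2(1+r)^2$ and the split $\frac{2K}{K+1}\sum_{n\ge2}\psi_n+\frac{K-1}{K+1}\psi_1$ — getting these constants to match the statement exactly, rather than obtaining a slightly weaker inequality, is the delicate point, and it is also where the hypothesis $R_K\le R$ is used.
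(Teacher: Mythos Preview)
Your overall plan mirrors the paper's, but there is a real gap in the treatment of the first two terms. You claim that combining $|h(z)|\le (a+r)/(1+ar)$ with $|h'(z)|\le (1-|h(z)|^2)/(1-r^2)$ gives an upper bound for $|h'(z)|\psi_1(r)$ in terms of $a$ and $r$ alone. But the substitution goes the wrong way: from $|h(z)|\le (a+r)/(1+ar)$ one obtains $1-|h(z)|^2\ge (1-a^2)(1-r^2)/(1+ar)^2$, not $\le$. So you cannot bound $|h(z)|\psi_0(r)$ and $|h'(z)|\psi_1(r)$ separately and then add. (There is also an algebraic slip: the quotient $\bigl(1-((a+r)/(1+ar))^2\bigr)/(1-r^2)$ equals $(1-a^2)/(1+ar)^2$, not $(1-a^2)(1+r)^2/((1-r^2)(1+ar)^2)$.)

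The paper avoids this by treating the two terms \emph{jointly}. Setting $\beta=\psi_1(r)/\bigl((1-r^2)\psi_0(r)\bigr)$ and $Q(x)=x+\beta(1-x^2)$, Lemma~\ref{lem2} gives $|h(z)|\psi_0(r)+|h'(z)|\psi_1(r)\le \psi_0(r)\,Q(|h(z)|)$. Since $Q'(x)=1-2\beta x\ge 0$ for all $x\in[0,1]$ precisely when $\beta\le 1/2$, the condition $2\psi_1(r)\le (1-r^2)\psi_0(r)$ is exactly what allows the Pick bound to be substituted: $Q(|h(z)|)\le Q\bigl((a+r)/(1+ar)\bigr)$. \emph{This} is the role of the auxiliary radius $R$; you misattribute it to the later monotonicity in $a$. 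Only after the $Q$-step does one obtain an expression in $a$ and $r$, written as $\psi_0(r)+(1-a)F_4(a,r)/(1+ar)$, and then a separate computation shows $\partial_a F_4\ge 0$ (note: increasing, not decreasing as for $F_1$ in Theorem~\ref{T1}), whence $F_4(a,r)\le F_4(1,r)=\Phi_5(r)/(1+r)$. The inequality $R_K\le R$ is then checked directly from the form of $\Phi_5$, not deduced from the $a$-monotonicity. Your rough bounds $1+a\le 2$, $1+ar\ge 1$ would in any case overshoot $F_4(1,r)$ and fail to reproduce $\Phi_5$ exactly. Your sharpness sketch is essentially correct.
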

\begin{proof}
For simplicity, we assume that $\Vert h(z)\Vert_\infty\leq 1$. In view of \textrm{lemma \ref{lem2}}, we have $|a_n|\leq 1-|a_0|^2$ 
for $n\geq 1$. 
Using arguments as in the proof of \textrm{Theorem \ref{T1}} and in view of \textrm{Lemmas \ref{lem2}}, \ref{lem4} with the condition $|g'(z)|\leq k|h'(z)|$ in $\D$, we have the inequalities (\ref{E3}) and (\ref{E4}),
where $k= (K-1)/(K+1)$, $k\in[0,1)$ and $|a_0|=a\in[0,1]$. In view of \textrm{Lemma \ref{lem7}}, we have 
\be\label{E8} \sum_{n=2}^\infty |a_n|\psi_n(r)+\sum_{n=1}^\infty |a_n|^2\left(\frac{\psi_{2n}(r)}{1+|a_0|}+\Psi_{2n+1}(r)\right)\leq \left(1-a^2\right)\Psi_2(r).\ee
Let $Q(x)=x+\beta(1-x^2)$, where $0\leq x\leq \alpha(\leq1)$ and $\beta\geq 0$. Therefore, $Q'(x)=1-2\beta x$, 
$Q''(x)=-2\beta\leq 0$. It follows that $Q'(x)$ is a monotonically decreasing function of $x$, and hence, we have $Q'(x)\geq Q'(\alpha)=1-2\beta\alpha\geq 0$ for $\beta\leq 1/(2\alpha)$. 
Therefore, we have $Q(x)\leq Q(\alpha)$ for $0\leq \beta\leq 1/(2\alpha)$. 
In consideration of \textrm{Lemmas \ref{lem1}} and \ref{lem2}, and utilizing the inequalities (\ref{E3}), (\ref{E4}), and (\ref{E8}), we have 
\beas &&|h(z)|\psi_0(r)+|h'(z)|\psi_1(r)+\sum_{n=2}^\infty |a_n| \psi_n(r)+\sum_{n=1}^\infty |b_n| \psi_n(r)\nonumber\\[2mm]
&&+\sum_{n=1}^\infty |a_n|^2\left(\frac{\psi_{2n}(r)}{1+|a_0|}+\Psi_{2n+1}(r)\right)\eeas
\bea&\leq&\psi_0(r)\left(|h(z)|+\frac{\psi_1(r)}{\left(1-r^2\right)\psi_0(r)}\left(1-|h(z)|^2\right)\right)+(1+k)\left(1-a^2\right)\sum_{n=2}^\infty \psi_n(r)\nonumber\\[2mm]
&&+k\left(1-a^2\right)\psi_1(r)\nonumber\\[2mm]
\label{E0}&\leq&\psi_0(r)\left(\frac{a+r}{1+ar}+\frac{\psi_1(r)}{\left(1-r^2\right)\psi_0(r)}\left(1-\left(\frac{a+r}{1+ar}\right)^2\right)\right)\\[2mm]
&&+(1+k)\left(1-a^2\right)\sum_{n=2}^\infty \psi_n(r)+k\left(1-a^2\right)\psi_1(r)\nonumber\\[2mm]
&=&\psi_0(r)+\frac{(1-a)F_4(a,r)}{1+ar},\nonumber\eea
where 
\beas F_4(a,r)=\frac{(1+a)\psi_1(r)}{1+ar}+(1+ar)\left(1+a\right)\left((1+k)\sum_{n=2}^\infty \psi_n(r)+k\psi_1(r)\right)-(1-r)\psi_0(r)\eeas
and the inequality (\ref{E0}) holds for $\psi_1(r)/\left(\left(1-r^2\right)\psi_0(r)\right)\leq 1/2$, {\it i.e.,} for $r\in[0,R]$, where $R\in(0,1)$ is the minimal positive root of the equation $2\psi_1(r)-\left(1-r^2\right)\psi_0(r)=0$.
Differentiating partially $F_4(a,r)$ with respect to $a$, we obtain 
\beas\frac{\pa}{\pa a}F_4(a,r)=\left(1+r+2ar\right)\left((1+k)\sum_{n=2}^\infty \psi_n(r)+k\psi_1(r)\right)+\frac{(1-r)\psi_1(r)}{(1+ar)^2}\geq 0.\eeas
Therefore, $F_4(a,r)$ is a monotonically increasing function of $a\in[0, 1]$ and it follows that $F_4(a,r)\leq F_4(1,r)$ and
\beas F_4(1,r)=\frac{1}{1+r}\left(2\psi_1(r)+2(1+r)^2\left((1+k)\sum_{n=2}^\infty \psi_n(r)+k\psi_1(r)\right)-\left(1-r^2\right)\psi_0(r)\right),\eeas
which is less than or equal to zero for $r\leq R_K$, under the assumption that (\ref{t3}) holds. We now assert that $R_K\leq R$. For $r>R$, we have $2\psi_1(r)>\left(1-r^2\right)\psi_0$ and
\beas\Phi_5(r)>2(1+r)^2\left(\frac{2K}{K+1}\sum_{n=2}^\infty \psi_n(r)+\left(\frac{K-1}{K+1}\right)\psi_1(r)\right)\geq0,\eeas
which shows that $R_K\leq R$.\\[2mm]
\indent To prove the sharpness of the result, we consider the function $f_4(z)=h_4(z)+\ol{g_4(z)}$ in $\mathbb{D}$ such that 
\beas h_4(z)=\frac{a-z}{1-az}=A_0+\sum_{n=1}^\infty A_n z^n,\eeas
where $A_0=a$, $A_n=-(1-a^2)a^{n-1}$ for $n\geq 1$, $a\in[0,1)$ and $g_4(z)=\lambda k \sum_{n=1}^\infty A_n z^n$, where $|\lambda|=1$ and $k=(K-1)/(K+1)$.
Also $h_4'(z)=-\left(1-a^2\right)/(1-az)^2$.
Thus, 
\beas &&|h_4(-r)|\psi_0(r)+|h_4'(-r)|\psi_1(r)+\sum_{n=2}^\infty |A_n| \psi_n(r)+\sum_{n=1}^\infty |\lambda k A_n| \psi_n(r)\\[2mm]
&&+\sum_{n=1}^\infty |A_n|^2\left(\frac{\psi_{2n}(r)}{1+|A_0|}+\Psi_{2n+1}(r)\right)\\[2mm]
&=&\left(\frac{a+r}{1+ar}\right)\psi_0(r)+\left(\frac{1-a^2}{(1+ar)^2}\right)\psi_1(r)+\left(1-a^2\right)(1+k)\sum_{n=2}^\infty a^{n-1} \psi_n(r)\\[2mm]
&&+k\left(1-a^2\right)\psi_1(r)+\left(1-a^2\right)^2\sum_{n=1}^\infty a^{2n-2}\left(\frac{\psi_{2n}(r)}{1+a}+\Psi_{2n+1}(r)\right)\\[2mm]
&=&\psi_0(r)+\frac{(1-a)F_5(a,r)}{(1+ar)^2}+\left(1-a^2\right)^2\sum_{n=1}^\infty a^{2n-2}\left(\frac{\psi_{2n}(r)}{1+a}+\Psi_{2n+1}(r)\right),\eeas
where $k=(K-1)/(K+1)$ and 
\beas F_5(a,r)&=&\left(1+a\right)\psi_1(r)+(1+ar)^2\left(1+a\right)\left((1+k)\sum_{n=2}^\infty \psi_n(r)+k\psi_1(r)\right)\\[2mm]
&&-(1-r)(1+ar)\psi_0(r).\eeas
Letting $a\to 1^-$, we have
\beas&&|h_4(-r)|\psi_0(r)+|h_4'(-r)|\psi_1(r)+\sum_{n=2}^\infty |A_n| \psi_n(r)+\sum_{n=1}^\infty |B_n| \psi_n(r)\\[2mm]
&&+\sum_{n=1}^\infty |A_n|^2\left(\frac{\psi_{2n}(r)}{1+|A_0|}+\Psi_{2n+1}(r)\right)\hspace{10cm}\\[2mm]
&=&\psi_0(r)+\frac{(1-a)}{(1+ar)^2}\Phi_5(r)+O\left(\left(1-a\right)^2\right).\eeas 
Since $\Phi_5(r)>0$ for $(R_{K}, R_{K}+\epsilon)$, thus the radius cannot be improved. This completes the proof.
\end{proof}
\subsection{Applications of \textrm{Theorem \ref{T3}}}
The following results are counterparts of Bohr's theorem in different settings, derived from the applications of \textrm{Theorem \ref{T3}}.
\begin{itemize}
\item[(1)] Let $\psi_n(r)=r^n$ for $n\geq 0$ in \textrm{Theorem \ref{T3}}. As a consequence, we obtain the sharp refined Bohr-type inequality of \textrm{Theorem 1} of  \cite{KPS2018}, {\it i.e.,} 
\beas |h(z)|+|h'(z)|r+\sum_{n=2}^\infty |a_n| r^n+\sum_{n=1}^\infty |b_n| r^n+\left(\frac{1}{1+|a_0|}+\frac{r}{1-r}\right)\sum_{n=1}^\infty |a_n|^2 r^{2n}\leq \Vert h\Vert_\infty\eeas
for $r\leq R_1\leq \sqrt{2}-1$, where $R_1$ is the unique positive root of the equation
\beas (1-r)\left(r^2+2r-1\right)+2(1+r)^2\left(r^2+\left(\frac{K-1}{K+1}\right)r\right)=0.\eeas 
The number $R_1$ is the best possible.
\item[(2)] Let $\psi_0(r)=1$ and $\psi_n(r)=r^n/n$ for $n\geq 1$ in \textrm{Theorem \ref{T3}}. As a result, we obtain the following sharp Bohr-type inequality.
\beas |h(z)|+|h'(z)|r+\sum_{n=2}^\infty \frac{|a_n|}{n}r^n+\sum_{n=1}^\infty \frac{|b_n|}{n}r^n+\left(\frac{1}{1+|a_0|}+\frac{r}{1-r}\right)\sum_{n=1}^\infty \frac{|a_n|^2}{n^2} r^{2n}\leq \Vert h\Vert_\infty,\eeas
for $r\leq R_2\leq \sqrt{2}-1$, where $R_2$ is the unique positive root of the equation
\beas\left(r^2+2r-1\right)-2(1+r)^2\left(r+\frac{2K}{K+1}\log(1-r)\right)=0.\eeas 
The number $R_2$ is the best possible.
\item[(3)] Let $\psi_0(r)=1$ and $\psi_n(r)=r^n/n^2$ for $n\geq 1$ in \textrm{Theorem \ref{T3}}. As a consequence, we obtain the following sharp Bohr-type inequality.
\beas |h(z)|+|h'(z)|r+\sum_{n=2}^\infty \frac{|a_n|}{n^2}r^n+\sum_{n=1}^\infty \frac{|b_n|}{n^2}r^n+\left(\frac{1}{1+|a_0|}+\frac{r}{1-r}\right)\sum_{n=1}^\infty \frac{|a_n|^2}{n^4} r^{2n}\leq \Vert h\Vert_\infty\eeas
for $r\leq R_3\leq \sqrt{2}-1$, where $R_3$ is the unique positive root of the equation
\beas\left(r^2+2r-1\right)+2(1+r)^2\left(\frac{2K}{K+1}\text{\it Li}_2(r)-r\right)=0,\eeas 
where $\text{\it Li}_2(r)$ is a dilogarithm. The number $R_3$ is the best possible.
\end{itemize}
In the following, we derive a sharp result, which is a generalization of the Bohr-type inequality in refined form within the context of harmonic mappings.
\begin{theo}\label{T4} Suppose that $f(z)=h(z)+\ol{g(z)}=\sum_{n=0}^\infty a_n z^n+\ol{\sum_{n=1}^\infty b_n z^n}$ is a sense-preserving $K$-quasiconformal harmonic mapping in $\mathbb{D}$, where $h(z)$ is bounded in $\mathbb{D}$. If $p\in(0,2]$ and $\{\psi_n(r)\}_{n=1}^\infty\in\F$ is a decreasing sequence with $\psi_0(r)\not=0$ and satisfies the inequality
\be\label{t4} 
\Phi_6(r):=\psi_1(r)+(1+r)^2\left(\frac{2K}{K+1}\sum_{n=2}^\infty \psi_n(r)+\left(\frac{K-1}{K+1}\right)\psi_1(r)\right)-\left(1-r^2\right)\psi_0(r)\leq 0\ee
for $ r\leq R_{K}$, then the following inequality holds 
\beas &&|h(z)|^2\psi_0(r)+|h'(z)|\psi_1(r)+\sum_{n=2}^\infty |a_n| \psi_n(r)+\sum_{n=1}^\infty |b_n| \psi_n(r)\\[2mm]
&&+\sum_{n=1}^\infty |a_n|^2\left(\frac{\psi_{2n}(r)}{1+|a_0|}+\Psi_{2n+1}(r)\right)\leq \psi_0(r)\Vert h(z)\Vert_\infty\eeas
for $ r\leq R_{K}\leq R$, where $R\in(0,1)$ is the minimal positive root of the equation $2\psi_1(r)=\left(1-r^2\right)\psi_0(r)$
 and $R_{K}$ is minimal positive root of the equation $\Phi_6(r)=0$.
In the case when $\Phi_6(r)>0$ in some interval $(R_{K}, R_{K}+\epsilon)$ $(\epsilon>0)$, then the number $R_{K}$ cannot be improved.
\end{theo}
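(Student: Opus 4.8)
The plan is to run the argument of Theorem \ref{T3} almost verbatim, the only new feature being that the term $|h(z)|\psi_0(r)$ is replaced by the quadratic term $|h(z)|^2\psi_0(r)$. By homogeneity we may assume $\Vert h\Vert_\infty\le1$; write $a=|a_0|\in[0,1]$ and $k=(K-1)/(K+1)\in[0,1)$, noting $1+k=2K/(K+1)$. Since $f$ is sense-preserving and $K$-quasiconformal, Schwarz's lemma gives $|g'(z)|\le k|h'(z)|$ on $\D$, and Lemma \ref{lem2} gives $|a_n|\le1-a^2$ for $n\ge1$. Exactly as for Theorem \ref{T3}, Lemma \ref{lem4} together with the Cauchy--Schwarz inequality yields
\[\sum_{n=1}^\infty|b_n|\psi_n(r)\le k(1-a^2)\sum_{n=1}^\infty\psi_n(r),\]
and Lemma \ref{lem7} yields
\[\sum_{n=2}^\infty|a_n|\psi_n(r)+\sum_{n=1}^\infty|a_n|^2\Big(\frac{\psi_{2n}(r)}{1+a}+\Psi_{2n+1}(r)\Big)\le(1-a^2)\Psi_2(r).\]

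The genuinely new point is the estimate of $|h(z)|^2\psi_0(r)+|h'(z)|\psi_1(r)$. I would use Lemma \ref{lem2} in the form $|h'(z)|\le(1-|h(z)|^2)/(1-r^2)$, so that, writing $\beta=\beta(r):=\psi_1(r)/\big((1-r^2)\psi_0(r)\big)$ (well defined since $\psi_0(r)\ne0$),
\[|h(z)|^2\psi_0(r)+|h'(z)|\psi_1(r)\le\psi_0(r)\big(\beta+(1-\beta)|h(z)|^2\big).\]
The auxiliary function $\widetilde Q(x)=\beta+(1-\beta)x^2$ is nondecreasing on $[0,1]$ exactly when $\beta\le1$; on $[0,R_K]$ one has (once $R_K\le R$ is established) $2\psi_1(r)\le(1-r^2)\psi_0(r)$, hence $\beta\le1/2<1$, so Pick's lemma (Lemma \ref{lem1}), giving $|h(z)|\le(a+r)/(1+ar)$, lets us replace $|h(z)|$ by $(a+r)/(1+ar)$. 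Combining the three estimates and using $1-\big((a+r)/(1+ar)\big)^2=(1-a^2)(1-r^2)/(1+ar)^2$ together with $\psi_0(r)\beta=\psi_1(r)/(1-r^2)$, the left-hand side of the asserted inequality is at most
\[\psi_0(r)+(1-a^2)\Big(\frac{\psi_1(r)}{(1+ar)^2}+(1+k)\sum_{n=2}^\infty\psi_n(r)+k\psi_1(r)-\frac{(1-r^2)\psi_0(r)}{(1+ar)^2}\Big).\]
Multiplying the bracket by $(1+ar)^2$ reduces the claim to
\[\psi_1(r)+(1+ar)^2\Big((1+k)\sum_{n=2}^\infty\psi_n(r)+k\psi_1(r)\Big)\le(1-r^2)\psi_0(r)\qquad(0\le a\le1);\]
the left side is increasing in $a$, so it suffices to check $a=1$, which — since $1+k=2K/(K+1)$ and $k=(K-1)/(K+1)$ — is precisely the hypothesis $\Phi_6(r)\le0$. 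As in Theorem \ref{T3} one verifies that $\Phi_6(r)>0$ for $r>R$, which forces $R_K\le R$ and so keeps $\beta(r)\le1/2$ throughout the range where we work.

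For sharpness I would take $f(z)=h(z)+\ol{g(z)}$ with $h(z)=(a-z)/(1-az)=a+\sum_{n\ge1}A_nz^n$, $A_n=-(1-a^2)a^{n-1}$, and $g(z)=\lambda k\sum_{n\ge1}A_nz^n$ with $|\lambda|=1$, and evaluate at $z=-r$, so that $|h(-r)|=(a+r)/(1+ar)$ and $|h'(-r)|=(1-a^2)/(1+ar)^2$; here each inequality used above is an equality up to $O((1-a)^2)$. Expanding $|h(-r)|^2=1-2(1-a)(1-r)/(1+ar)+O((1-a)^2)$ and letting $a\to1^-$ one obtains that the left-hand side of the asserted inequality equals
\[\psi_0(r)+\frac{2(1-a)}{(1+r)^2}\,\Phi_6(r)+O\big((1-a)^2\big),\]
so that if $\Phi_6(r)>0$ on some interval $(R_K,R_K+\epsilon)$ the inequality is violated there for $a$ sufficiently close to $1$; hence $R_K$ cannot be improved.

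The step I expect to require the most care is the manipulation around the auxiliary quadratic $\widetilde Q$: one must ensure $\beta(r)\le1$ on the range of validity (which is exactly what the bound $R_K\le R$ secures), and then check that, after inserting Pick's estimate, the residual expression is monotone in $a$ and collapses cleanly to $\Phi_6(r)$. Everything else is a routine reorganisation of the estimates already established for Theorems \ref{T1}, \ref{T2} and \ref{T3}.
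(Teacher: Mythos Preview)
Your argument is essentially the paper's own proof: the same reduction to $\Vert h\Vert_\infty\le1$, the same use of Lemmas \ref{lem1}, \ref{lem2}, \ref{lem4} and \ref{lem7}, the same linearisation of $|h(z)|^2\psi_0(r)+|h'(z)|\psi_1(r)$ via $|h'(z)|\le(1-|h(z)|^2)/(1-r^2)$, and the same extremal family for sharpness. The organisation into $\widetilde Q$ and the subsequent reduction to the condition at $a=1$ is exactly what the paper does (it packages the same computation as a function $F_6(a,r)$ and shows it is decreasing in $a$).

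One point does need correction. You assert that ``as in Theorem \ref{T3} one verifies that $\Phi_6(r)>0$ for $r>R$'', with $R$ the root of $2\psi_1(r)=(1-r^2)\psi_0(r)$ as in the statement. That verification does \emph{not} go through: in $\Phi_5$ the leading term is $2\psi_1$, but in $\Phi_6$ it is only $\psi_1$, so the inequality $2\psi_1>(1-r^2)\psi_0$ is not enough to force $\Phi_6>0$. Indeed it can fail: with $K=1$, $\psi_0\equiv1$, $\psi_1(r)=r$, $\psi_n\equiv0$ for $n\ge2$, one gets $R=\sqrt2-1$ while $R_K=(\sqrt5-1)/2>R$. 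What the argument actually needs---and what the paper's own proof uses---is only $\beta\le1$, i.e.\ $R_K\le R'$ with $R'$ the root of $\psi_1(r)=(1-r^2)\psi_0(r)$; \emph{that} follows immediately since for $r>R'$ the term $\psi_1(r)-(1-r^2)\psi_0(r)$ is already positive. The paper's applications (where the upper bound is $(\sqrt5-1)/2$, not $\sqrt2-1$) confirm that the $R$ appearing in the statement of Theorem \ref{T4} is a carry-over from Theorem \ref{T3} and should read $\psi_1(r)=(1-r^2)\psi_0(r)$. Once you make that adjustment, your proof is complete and coincides with the paper's.
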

\begin{proof}
For simplicity, we assume that $\Vert h(z)\Vert_\infty\leq 1$. In view of \textrm{lemma \ref{lem2}}, we have $|a_n|\leq 1-|a_0|^2$ 
for $n\geq 1$. Let $|a_0|=a\in[0,1]$.
Using arguments as in the proof of \textrm{Theorem \ref{T3}} and in view of \textrm{Lemmas \ref{lem2}}, \ref{lem4} and \ref{lem7} with the condition $|g'(z)|\leq k|h'(z)|$ in 
$\D$, we have the inequalities (\ref{E3}), (\ref{E4}), (\ref{E8}), where $k= (K-1)/(K+1)$, $k\in[0,1)$.
In consideration of \textrm{Lemmas \ref{lem1}} and \ref{lem2}, and utilizing the inequalities (\ref{E3}), (\ref{E4}), and (\ref{E8}), we have 
\bea &&|h(z)|^2\psi_0(r)+|h'(z)|\psi_1(r)+\sum_{n=2}^\infty |a_n| \psi_n(r)+\sum_{n=1}^\infty |b_n| \psi_n(r)\nonumber\\[2mm]
&&+\sum_{n=1}^\infty |a_n|^2\left(\frac{\psi_{2n}(r)}{1+|a_0|}+\Psi_{2n+1}(r)\right)\nonumber\\[2mm]
&\leq&\psi_0(r)\left(|h(z)|^2+\frac{\psi_1(r)}{\left(1-r^2\right)\psi_0(r)}\left(1-|h(z)|^2\right)\right)+(1+k)\left(1-a^2\right)\sum_{n=2}^\infty \psi_n(r)\nonumber\\[2mm]
&&+k\left(1-a^2\right)\psi_1(r)\nonumber\\[2mm]
\label{F0}&\leq&\psi_0(r)\left(\left(1-\frac{\psi_1(r)}{\left(1-r^2\right)\psi_0(r)}\right)\left(\frac{a+r}{1+ar}\right)^2+\frac{\psi_1(r)}{\left(1-r^2\right)\psi_0(r)}\right)\\[2mm]
&&+(1+k)\left(1-a^2\right)\sum_{n=2}^\infty \psi_n(r)+k\left(1-a^2\right)\psi_1(r)\nonumber\\[2mm]
&=&\psi_0(r)-\left(1-a^2\right)F_6(a,r),\nonumber\eea
where 
\beas F_6(a,r)=\left(\psi_0(r)-\frac{\psi_1(r)}{\left(1-r^2\right)}\right)\frac{\left(1-r^2\right)}{(1+ar)^2}-(1+k)\sum_{n=2}^\infty \psi_n(r)-k\psi_1(r).\eeas
and the inequality (\ref{F0}) holds for $\psi_1(r)/\left(\left(1-r^2\right)\psi_0(r)\right)\leq 1$, {\it i.e.,} for $r\in[0,R]$, where $R\in(0,1)$ is the minimal positive root of the equation $\psi_1(r)-\left(1-r^2\right)\psi_0(r)=0$.
Differentiating partially $F_6(a,r)$ with respect to $a$, we obtain 
\beas\frac{\pa}{\pa a}F_6(a,r)=-\left(\psi_0(r)-\frac{\psi_1(r)}{\left(1-r^2\right)}\right)\frac{2r\left(1-r^2\right)}{(1+ar)^3}\leq 0.\eeas
Therefore, $F_6(a,r)$ is a monotonically decreasing function of $a\in[0, 1]$ and it follows that $F_6(a,r)\geq F_6(1,r)$ and
\beas F_6(1,r)=\frac{1}{(1+r)^2}\left(\left(1-r^2\right)\psi_0(r)-(1+r)^2\left((1+k)\sum_{n=2}^\infty \psi_n(r)+k\psi_1(r)\right)-\psi_1(r)\right),\eeas
which is greater than or equal to zero for $r\leq R_K$, under the assumption that (\ref{t4}) holds. We now assert that $R_K\leq R$. For $r>R$, we have $\psi_1(r)>\left(1-r^2\right)\psi_0$ and
\beas \Phi_6(r)>(1+r)^2\left(\frac{2K}{K+1}\sum_{n=2}^\infty \psi_n(r)+\left(\frac{K-1}{K+1}\right)\psi_1(r)\right)\geq 0,\eeas
which shows that $R_K\leq R$.\\[2mm]
\indent To prove the sharpness of the result, we consider the function $f_5(z)=h_5(z)+\ol{g_5(z)}$ in $\mathbb{D}$ such that 
\beas h_5(z)=\frac{a-z}{1-az}=A_0+\sum_{n=1}^\infty A_n z^n,\eeas
where $A_0=a$, $A_n=-(1-a^2)a^{n-1}$ for $n\geq 1$, $a\in[0,1)$ and $g_4(z)=\lambda k \sum_{n=1}^\infty A_n z^n$, where $|\lambda|=1$ and $k=(K-1)/(K+1)$.
Also $h_5'(z)=-\left(1-a^2\right)/(1-az)^2$.
Thus, 
\beas &&|h_4(-r)|^2\psi_0(r)+|h_4'(-r)|\psi_1(r)+\sum_{n=2}^\infty |A_n| \psi_n(r)+\sum_{n=1}^\infty |\lambda k A_n| \psi_n(r)\\[2mm]
&&+\sum_{n=1}^\infty |A_n|^2\left(\frac{\psi_{2n}(r)}{1+|A_0|}+\Psi_{2n+1}(r)\right)\\[2mm]
&=&\left(\frac{a+r}{1+ar}\right)^2\psi_0(r)+\frac{\left(1-a^2\right)\psi_1(r)}{(1+ar)^2}+\left(1-a^2\right)(1+k)\sum_{n=2}^\infty a^{n-1} \psi_n(r)\\[2mm]
&&+k\left(1-a^2\right)\psi_1(r)+\left(1-a^2\right)^2\sum_{n=1}^\infty a^{2n-2}\left(\frac{\psi_{2n}(r)}{1+a}+\Psi_{2n+1}(r)\right)\\[2mm]
&=&\psi_0(r)+\frac{\left(1-a^2\right)F_7(a,r)}{(1+ar)^2}+\left(1-a^2\right)^2\sum_{n=1}^\infty a^{2n-2}\left(\frac{\psi_{2n}(r)}{1+a}+\Psi_{2n+1}(r)\right),\eeas
where $k=(K-1)/(K+1)$ and 
\beas F_7(a,r)=\psi_1(r)+(1+ar)^2\left((1+k)\sum_{n=2}^\infty \psi_n(r)+k\psi_1(r)\right)-\left(1-r^2\right)\psi_0(r).\eeas
Letting $a\to 1^-$, we have
\beas&&|h_4(-r)|^2\psi_0(r)+|h_4'(-r)|\psi_1(r)+\sum_{n=2}^\infty |A_n| \psi_n(r)+\sum_{n=1}^\infty |B_n| \psi_n(r)\\[2mm]
&&+\sum_{n=1}^\infty |A_n|^2\left(\frac{\psi_{2n}(r)}{1+|A_0|}+\Psi_{2n+1}(r)\right)\hspace{10cm}\\[2mm]
&=&\psi_0(r)+\frac{\left(1-a^2\right)}{(1+ar)^2}\Phi_6(r)+O\left(\left(1-a^2\right)^2\right).\eeas 
Since $\Phi_6(r)>0$ for $(R_{K}, R_{K}+\epsilon)$, thus the radius cannot be improved. This completes the proof.
\end{proof}
\subsection{Applications of \textrm{Theorem \ref{T4}}}
The following results are counterparts of Bohr's theorem in different settings, derived from the applications of \textrm{Theorem \ref{T4}}.
\begin{itemize}
\item[(1)] Let $\psi_n(r)=r^n$ for $n\geq 0$ in \textrm{Theorem \ref{T4}}. As a consequence, we obtain the sharp refined Bohr-type inequality of \textrm{Theorem 1} of  \cite{KPS2018}, {\it i.e.,} 
\beas |h(z)|^2+|h'(z)|r+\sum_{n=2}^\infty |a_n| r^n+\sum_{n=1}^\infty |b_n| r^n+\left(\frac{1}{1+|a_0|}+\frac{r}{1-r}\right)\sum_{n=1}^\infty |a_n|^2 r^{2n}\leq \Vert h\Vert_\infty\eeas
for $r\leq R_1\leq (\sqrt{5}-1)/2$, where $R_1$ is the unique positive root of the equation
\beas (1-r)\left(r^2+r-1\right)+(1+r)^2\left(r^2+\left(\frac{K-1}{K+1}\right)r\right)=0.\eeas 
The number $R_1$ is the best possible.
\item[(2)] Let $\psi_0(r)=1$ and $\psi_n(r)=r^n/n$ for $n\geq 1$ in \textrm{Theorem \ref{T4}}. As a consequence, we obtain the following sharp Bohr-type inequality.
\beas |h(z)|^2+|h'(z)|r+\sum_{n=2}^\infty \frac{|a_n|}{n}r^n+\sum_{n=1}^\infty \frac{|b_n|}{n}r^n+\left(\frac{1}{1+|a_0|}+\frac{r}{1-r}\right)\sum_{n=1}^\infty \frac{|a_n|^2}{n^2} r^{2n}\leq \Vert h\Vert_\infty,\eeas
for $r\leq R_2\leq (\sqrt{5}-1)/2$, where $R_2$ is the unique positive root of the equation
\beas\left(r^2+r-1\right)-(1+r)^2\left(r+\frac{2K}{K+1}\log(1-r)\right)=0.\eeas 
The number $R_2$ is the best possible.
\item[(3)] Let $\psi_0(r)=1$ and $\psi_n(r)=r^n/n^2$ for $n\geq 1$ in \textrm{Theorem \ref{T4}}. As a result, we obtain the following sharp Bohr-type inequality.
\beas |h(z)|^2+|h'(z)|r+\sum_{n=2}^\infty \frac{|a_n|}{n^2}r^n+\sum_{n=1}^\infty \frac{|b_n|}{n^2}r^n+\left(\frac{1}{1+|a_0|}+\frac{r}{1-r}\right)\sum_{n=1}^\infty \frac{|a_n|^2}{n^4} r^{2n}\leq \Vert h\Vert_\infty\eeas
for $r\leq R_3\leq (\sqrt{5}-1)/2$, where $R_3$ is the unique positive root of the equation
\beas\left(r^2+r-1\right)+(1+r)^2\left(\frac{2K}{K+1}\text{\it Li}_2(r)-r\right)=0,\eeas 
where $\text{\it Li}_2(r)$ is a dilogarithm. The number $R_3$ is the best possible.
\end{itemize}
\section{Declarations}
\noindent{\bf Acknowledgment:} The work of the first author is supported by University Grants Commission (IN) fellowship (No. F. 44 - 1/2018 (SA - III)). The authors like to
thank the anonymous reviewers and and the editing team for their valuable suggestions towards the improvement of the paper.\\[2mm]
{\bf Conflict of Interest:} The authors declare that there are no conflicts of interest regarding the publication of this paper.\\[1mm]
{\bf Availability of data and materials:} Not applicable.\\[1mm]

\end{document}